\documentclass[pdflatex,sn-mathphys-num]{sn-jnl}% Math and Physical Sciences Numbered Reference Style

\usepackage{graphicx}%
\usepackage{multirow}%
\usepackage{amsmath,amssymb,amsfonts}%
\usepackage{amsthm}%
\usepackage{mathrsfs}%
\usepackage[title]{appendix}%
\usepackage{xcolor}%
\usepackage{textcomp}%
\usepackage{manyfoot}%
\usepackage{booktabs}%
\usepackage{algorithm}%
\usepackage{algorithmicx}%
\usepackage{algpseudocode}%
\usepackage{listings}%
\usepackage{placeins}%
\usepackage{pifont}%
\theoremstyle{plain}
\newtheorem{theorem}{Theorem}[section]
\newtheorem*{theorem*}{Theorem}
\newtheorem{proposition}[theorem]{Proposition}
\newtheorem{lemma}[theorem]{Lemma}
\newtheorem{corollary}[theorem]{Corollary}

\theoremstyle{remark}

\theoremstyle{definition}
\newtheorem{definition}[theorem]{Definition}
\newtheorem{notation}[theorem]{Notation}

\numberwithin{equation}{section}

\renewcommand{\orcidlogo}{%
  \includegraphics[width=10pt]{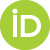}%
}

\begin{document}

\title[The K-theory of uniform Roe algebras for coarse structures generated by finite-rank free abelian subgroups]{The K-theory of uniform Roe algebras for coarse structures generated by finite-rank free abelian subgroups}

\author*{\fnm{Charles} \sur{Fanning} \orcid{https://orcid.org/0009-0002-8251-2802}}\email{cfannin8@students.kennesaw.edu}

\author{\fnm{Mehmet} \sur{Aktas} \orcid{https://orcid.org/0000-0002-9527-9600}}\email{maktas1@kennesaw.edu}

\affil{\orgdiv{School of Data Science and Analytics}, 
\orgname{Kennesaw State University}, 
\orgaddress{\street{1000 Chastain Rd NW}, 
\city{Kennesaw}, 
\postcode{30144}, 
\state{Georgia}, 
\country{United States}}}

% \author*[1,2]{\fnm{First} \sur{Author}}\email{iauthor@gmail.com}

% \author[2,3]{\fnm{Second} \sur{Author}}\email{iiauthor@gmail.com}
% \equalcont{These authors contributed equally to this work.}

% \author[1,2]{\fnm{Third} \sur{Author}}\email{iiiauthor@gmail.com}
% \equalcont{These authors contributed equally to this work.}

% \affil*[1]{\orgdiv{Department}, \orgname{Organization}, \orgaddress{\street{Street}, \city{City}, \postcode{100190}, \state{State}, \country{Country}}}

% \affil[2]{\orgdiv{Department}, \orgname{Organization}, \orgaddress{\street{Street}, \city{City}, \postcode{10587}, \state{State}, \country{Country}}}

% \affil[3]{\orgdiv{Department}, \orgname{Organization}, \orgaddress{\street{Street}, \city{City}, \postcode{610101}, \state{State}, \country{Country}}}

\abstract{

For a uniformly locally finite coarse space $X$, the uniform Roe algebra $C_u^*(X)$ is the operator norm closure of the controlled operators on $\ell^2(X)$. The $K$-theory of uniform Roe algebras is known in asymptotic dimension zero, but it is not fully understood in higher dimensions. We compute $K_0(C_u^*(G,\mathcal E))$ and $K_1(C_u^*(G,\mathcal E))$ for every countable discrete abelian group $G$ and every finite-rank free abelian subgroup $H\leq G$, where $\mathcal E$ is the coarse structure generated by $H$. We use the Proietti--Yamashita spectral sequence to express the $K$-theory in terms of $H_*(H;\ell^\infty(G,\mathbb Z))$, which we then compute.

}

% Give between 4 and 6 keywords.
% \keywords{keyword1, Keyword2, Keyword3, Keyword4}
% \pacs[MSC Classification]{43A25, 43A35, 43A45, 43A05, 43A07}

\maketitle

% Optional:
% \tableofcontents

\section*{Introduction}\label{sec1}

For a uniformly locally finite coarse space \( X \), the uniform Roe algebra \( C_u^*(X) \) is the operator norm closure of the controlled operators on \( \ell^2(X) \). For a countable discrete abelian group \( G \) and a finite-rank free abelian subgroup \( H\leq G \), we let \( \mathcal E \) be the coarse structure generated by the sets \( \{(g,k)\in G\times G:g-k\in F\} \), where \( F\subseteq H \) is finite. The controlled operators for \( \mathcal E \) preserve each \( H \)-coset and have uniformly bounded propagation within that coset.

% === %

A central theme in coarse index theory is to understand how the large-scale geometry and topology of a space determine the \( K \)-theory of its Roe and uniform Roe algebras \cite{roe1993coarse,higson1994homotopy,yu1997localization,skandalis2002coarse,Spakula2009,engel2019uniform,engel2019rough,bunke2020coarse}. More recently, this program has increasingly used homology, especially uniformly finite homology and groupoid homology, to study the \( K \)-theory of uniform Roe and groupoid \( C^* \)-algebras \cite{matui2012homology,bonicke2023dynamic,manuilov2024mapping,krutoy2026bijective}.

The \( K \)-theory of uniform Roe algebras is understood most explicitly for spaces of low asymptotic dimension, where the literature obtains computations in asymptotic dimension zero and strong results on \( K_0 \) in asymptotic dimension at most one \cite{winter2010nuclear,li2018classification,li2018low,chung2021structure,bonicke2023dynamic}. The low-dimensional description of \( K_0 \) already fails for some higher-dimensional spaces, and recent work develops broader results using groupoid homology and comparison maps and for \( \ell^p \) uniform Roe algebras \cite{li2018low,chung2021structure,bonicke2023dynamic,manuilov2024mapping,krutoy2026bijective}.

The motivation for using groupoid homology is that it provides a method for computing the \( K \)-theory of reduced groupoid \( C^* \)-algebras \cite{matui2016etale,farsi2018ample,yi2020homology,proietti2022homology,bonicke2023dynamic,proietti2023homology}.

% === %

The objective of this paper is to compute \( K_*(C_u^*(G,\mathcal E)) \) for every countable discrete abelian group \( G \) and every finite-rank free abelian subgroup \( H\leq G \).

% \paragraph{Main Result}

% ---------------------------------------- %
% (i) Main result one                      %
% ---------------------------------------- %
% ---------------------------------------- %
%     (a) Statement of the main result     %
% ---------------------------------------- %
% First, a paragraph, then the theorem in a 
% theorem block. The first paragraph is 
% structured as so:
%
% > First instance:
% "We first [verb (e.g., show, identify, 
% compute, etc.)] in Theorem~\ref{thm:...} 
% that ... "
%
% > Between first and last:
% "We next ...
%
% > Last instance: 
% "We then [verb] in Theorem~\ref{thm...}
% that ...
%
% > If there is only one main theorem:
% "We [verb] in Theorem~\ref{thm:...} that...
% "We [verb] (object) in Theorem~\ref{thm...}...
%
% This should only be one sentence stating
% clearly and concisely but still rigorously
% what the theorem does.
%
% If there are yet-to-be-defined terms in
% the theorem statement, add a sentence
% after the one sentence in the paragraph
% preceding the theorem block saying
% "Here, [(e.g., A  = B, by C we mean
% D, and ...)]

We compute both \( K \)-groups in Theorem~\ref{thm:finite-rank-k-theory} in terms of \( n=\operatorname{rank}(H) \), the quotient \( G/H \), and \( \mathfrak c=2^{\aleph_0} \). Here, \( \mathbb Q^{(\mathfrak c)} \) denotes the direct sum of \( \mathfrak c \) copies of \( \mathbb Q \).

The following is Theorem~\ref{thm:finite-rank-k-theory}.

\begin{theorem*}
The \( K \)-groups of \( C_u^*(G,\mathcal E) \) satisfy
\[
\left(
K_0\bigl(C_u^*(G,\mathcal E)\bigr),
K_1\bigl(C_u^*(G,\mathcal E)\bigr)
\right)
\cong
\begin{cases}
\left(
\ell^\infty(G,\mathbb Z),
0
\right),
& n=0,\\[1ex]
\left(
\mathbb Q^{(\mathfrak c)},
\ell^\infty(G/H,\mathbb Z)
\right),
& n=1,\\[1ex]
\left(
\mathbb Q^{(\mathfrak c)}
\oplus
\ell^\infty(G/H,\mathbb Z),
\mathbb Q^{(\mathfrak c)}
\right),
& n\geq2 \text{ and } n \text{ is even},\\[1ex]
\left(
\mathbb Q^{(\mathfrak c)},
\mathbb Q^{(\mathfrak c)}
\oplus
\ell^\infty(G/H,\mathbb Z)
\right),
& n\geq3 \text{ and } n \text{ is odd}.
\end{cases}
\]
The isomorphism is canonical when \( n=0 \) and noncanonical when \( n\geq1 \). The associated graded group in degree \( n \) is canonically isomorphic to \( \ell^\infty(G/H,\mathbb Z)\otimes_{\mathbb Z}\Lambda^nH \). An ordered basis of \( H \) identifies this group with \( \ell^\infty(G/H,\mathbb Z) \).
\end{theorem*}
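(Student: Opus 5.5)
The plan is to identify $C_u^*(G,\mathcal E)$ with a crossed product and then run the Proietti--Yamashita spectral sequence. First I would show that $C_u^*(G,\mathcal E)\cong \ell^\infty(G)\rtimes_r H$, where $H$ acts on $\ell^\infty(G)$ by translation. The controlled operators are finite sums $\sum_{h\in F} f_h\lambda_h$ with $f_h\in\ell^\infty(G)$ and $F\subseteq H$ finite, and $H$ is amenable, so full and reduced crossed products agree.

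\textbf{The case $n=0$.} Here $H=0$, so the algebra is $\ell^\infty(G)$. This gives $K_0=\ell^\infty(G,\mathbb Z)$ canonically and $K_1=0$.

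\textbf{The spectral sequence for $n\ge 1$.} Since $H\cong\mathbb Z^n$ is torsion free and satisfies Baum--Connes with coefficients, the Proietti--Yamashita spectral sequence applies. It reads
\[
E^2_{p,q}=H_p\bigl(H;K_q(\ell^\infty(G))\bigr)\Rightarrow K_{p+q}\bigl(C_u^*(G,\mathcal E)\bigr).
\]
We have $K_0(\ell^\infty(G))=\ell^\infty(G,\mathbb Z)$ and $K_1=0$, so only the rows with $q$ even are nonzero, and they are concentrated in $0\le p\le n$. Choosing a section $G/H\to G$ gives an $H$-equivariant isomorphism
\[
\ell^\infty(G,\mathbb Z)\cong\ell^\infty\bigl(G/H,\ell^\infty(H,\mathbb Z)\bigr).
\]

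\textbf{Computing the homology groups.} I would compute $H_*(H;\ell^\infty(G,\mathbb Z))$ with the Koszul resolution of $\mathbb Z^n$. In the top degree, $H_n$ is the group of invariants tensored with the orientation module. The invariants are the functions constant on each coset, so $H_n\cong\ell^\infty(G/H,\mathbb Z)\otimes\Lambda^nH$ canonically, and an ordered basis of $H$ trivializes $\Lambda^nH$.

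For $p<n$ I would show $H_p$ is a $\mathbb Q$-vector space of dimension $\mathfrak c$, in three steps.
\begin{itemize}
\item \emph{Torsion-freeness and divisibility.} Use the long exact sequences in one coordinate at a time. The key one-variable input is that $(1-s)$ is injective on $\ell^\infty(\mathbb Z,\mathbb Z)$ modulo constants. Its cokernel is divisible, because a bounded function can be written as a multiple of $m$ plus a bounded coboundary by averaging over blocks of length $m$. It is also torsion free.
\item \emph{Upper bound on cardinality.} Cardinality is at most $|\ell^\infty(G,\mathbb Z)|=\mathfrak c$.
\item \emph{Lower bound on cardinality.} Produce $\mathfrak c$ independent classes, for instance from functions on $H$ with prescribed asymptotic densities along a coordinate.
\end{itemize}

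\textbf{Degeneration, which I expect to be the main obstacle.} For the top column $p=n$, I would use naturality for the unital inclusion $\ell^\infty(G/H)\subseteq\ell^\infty(G)$ of coset-constant functions. On the subalgebra $H$ acts trivially, so the crossed product is $\ell^\infty(G/H)\otimes C^*(\mathbb Z^n)$. Its spectral sequence degenerates by the Künneth theorem, and its top column maps isomorphically onto $E^2_{n,*}$. Hence every class in column $n$ is a permanent cycle, so all differentials out of column $n$ vanish. Columns $p<n$ can only exchange differentials among the rational vector spaces.

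What remains is to show these leave $\mathbb Q$-vector spaces of dimension exactly $\mathfrak c$. Subquotients of $\mathbb Q$-vector spaces are $\mathbb Q$-vector spaces of dimension at most $\mathfrak c$, so only the lower bound is needed. I would first try to get it from the same naturality trick: use a commuting family of subalgebras $\ell^\infty(G/H)\otimes\ell^\infty(H')$, where $H'$ is a rank-$1$ summand direction, on which the sequence also degenerates. Its column $0$ injects into the limit. If this fails, a fallback is to rationalize, because after $\otimes\mathbb Q$ the Chern character makes the sequence degenerate, and then compare dimensions.

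\textbf{Extensions and assembling parity.} The filtration has column $n$ as its top quotient. This gives exact sequences $0\to V\to K_{n \bmod 2}\to\ell^\infty(G/H,\mathbb Z)\otimes\Lambda^nH\to0$, where $V$ is a $\mathbb Q$-vector space assembled from the rational columns. Since $V$ is divisible, hence injective, the sequence splits noncanonically. Summing columns by the parity of $p$ gives the stated cases:
\begin{itemize}
\item for $n=1$, column $0$ gives $K_0=\mathbb Q^{(\mathfrak c)}$ and column $1$ gives $K_1=\ell^\infty(G/H,\mathbb Z)$;
\item for $n\ge2$, each parity contains at least one rational column, hence a $\mathbb Q^{(\mathfrak c)}$, and the $\ell^\infty(G/H,\mathbb Z)$ summand lands in $K_{n\bmod 2}$.
\end{itemize}
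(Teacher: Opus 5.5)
Your outline has the same architecture as the paper: the crossed product, Proietti--Yamashita with $E^2_{p,q}=H_p(H;\ell^\infty(G,\mathbb Z))$ in even rows, the Koszul top degree, and splitting by divisibility. Two of your steps differ from the paper but are sound.

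\emph{Unique divisibility for $p<n$.} Your mod-$m$ lifting argument works. Done globally, it is the sequence $0\to\ell^\infty(G,\mathbb Z)\xrightarrow{m}\ell^\infty(G,\mathbb Z)\to\operatorname{Map}(G,\mathbb Z/m)\to0$ together with Shapiro's lemma. This is the $\mathbb Z/m$ analogue of the paper's comparison with $\ell^\infty(G,\mathbb R)$ through $\operatorname{Map}(G,\mathbb R/\mathbb Z)$. The paper's version also gives an $\mathbb R$-structure on which invariant means act linearly. For your density classes, a single invariant mean proves independence only if the densities are linearly independent over $\mathbb Q$. Otherwise use separating means as in Lemma~\ref{lem:separating-invariant-means}.

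\emph{Column $n$.} Your proof that every differential leaving column $n$ vanishes, by naturality along the trivial-action subalgebra $\ell^\infty(G/H)$, is valid and does not appear in the paper. Prove degeneration for the trivial action by evaluating at points of the spectrum, not by comparing sizes via K\"unneth, since these groups are not finitely generated.

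The gap is the step you call the main obstacle. Naturality from a subalgebra $C$ whose sequence degenerates only shows that the image of $E^2(C)$ consists of permanent cycles, and in column $0$ this is automatic. It cannot show that these classes are not boundaries. Column $0$ receives $d_r$ from column $r$, and those sources need not come from $C$. So ``its column $0$ injects into the limit'' is unjustified, and your plan does not address odd columns at all.

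Counting dimensions does not rescue this. For $n=5$, a priori $d_3\colon E^3_{4,q}\to E^3_{1,q+2}$ could be surjective and $d_3\colon E^3_{3,q}\to E^3_{0,q+2}$ injective. That would remove the $\mathbb Q^{(\mathfrak c)}$ summand from $K_1$.

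So you must use your fallback, and it needs no dimension comparison. Rational degeneration makes $\operatorname{im} d_r$ torsion. Every target column $p-r<n$ is zero or a $\mathbb Q$-vector space, so $d_r=0$, including the differentials out of column $n$. This is exactly the paper's Proposition~\ref{prop:integral-degeneration}.

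Finally, Proposition~\ref{prop:spectral-sequence} and the rational degeneration result require separable coefficients, and $\ell^\infty(G)$ is not separable. You need the paper's colimit over the separable $H$-invariant subalgebras in $\mathcal S$. Miller's functoriality supplies the connecting maps and convergence there. Your naturality argument tacitly relies on the same functoriality.
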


% ---------------------------------------- %
%     (b) Main idea behind main result     %
% ---------------------------------------- %
% A one sentence paragraph which explains
% what each individual component in the 
% theorem block actually does.
% Typically begins with "That is, ..." 
% unless we do not fully characterize
% every component in the theorem, in which
% case we don't use a transition.

That is, the nonzero associated graded groups of the filtration of \( K_*(C_u^*(G,\mathcal E)) \) in degrees below \( n \), grouped according to parity, have direct sums isomorphic to copies of \( \mathbb Q^{(\mathfrak c)} \), and the associated graded group in degree \( n \) occurs in \( K_0 \) when \( n \) is even and in \( K_1 \) when \( n \) is odd.

% ---------------------------------------- %
%     (c) Outline of proof of main result  %
% ---------------------------------------- %
% The role of the proof outline is to
% introduce just enough of the machinery
% to show the reader the main insight which
% enables the proof.
%
% One short paragraph explaining why the
% computation is possible. State the main
% reduction and only the information needed
% to show how it yields the theorem. Display
% the central equation when it makes this
% reduction substantially clearer.

The computation of \( K_*(C_u^*(G,\mathcal E)) \) reduces to group homology through the Proietti--Yamashita spectral sequence \cite[Theorem~3.3]{proietti2022homology}, whose \( E^2 \)-page is
\[
E^2_{p,q}
\cong
\begin{cases}
H_p\bigl(H;\ell^\infty(G,\mathbb Z)\bigr),
& q \text{ is even},\\
0,
& q \text{ is odd}.
\end{cases}
\]
We show that \( H_n(H;\ell^\infty(G,\mathbb Z))\cong\ell^\infty(G/H,\mathbb Z)\otimes_{\mathbb Z}\Lambda^nH \) and that \( H_p(H;\ell^\infty(G,\mathbb Z))\cong\mathbb Q^{(\mathfrak c)} \) for \( p<n \). The higher differentials in the spectral sequence vanish, so \( E^\infty=E^2 \).

% \paragraph{Importance}

% ---------------------------------------- %
% (i) Is it surprising?                  %
% ---------------------------------------- %
% Should begin with: "What is perhaps
% surprising is that ... "
%
% This paragraph should be one short sentence
% with no \cite.

What is perhaps surprising is that all higher differentials in the Proietti--Yamashita spectral sequence vanish.

% ---------------------------------------- %
% (i) What do we assume?                   %
% ---------------------------------------- %
% The first sentence (the introduction) should
% be: "We make the following assumptions throughout
% this paper."
%
% Sentence 1 (excluding introductory sentence):
% "We (often) assume that [X]"
% "We (often) assume that [X] and [Y]"
% "We (often) assume that [X], [Y], and [Z]"
%
% Sentences 2+: "We also (often) assume that ..."
% "We also (often) assume that [X] and [Y]"
% "We also (often) assume that [X], [Y], and [Z]"
%
% Prefer "[X] and [Y]" or "[X], [Y], and [Z]"
% for related assumptions which can be stated more
% concisely together than separate. Prefer separate
% sentences for unrelated assumptions or assumptions
% which are too long to comfortably include in a
% sentence.
%
% This should be exactly one paragraph.

We make the following assumptions throughout this paper. We assume that \( G \) is a countable discrete abelian group and that \( H\leq G \) is a finite-rank free abelian subgroup. We also assume that \( \mathcal E \) is the coarse structure generated by such a subgroup \( H \).

% ---------------------------------------- %
% (ii) How could this be extended?         %
% ---------------------------------------- %
% "A natural next question is ... "
% This paragraph should not exceed one
% sentence in length. If there are several
% clear extensions, we should prioritize
% the clearest one in the introduction and
% save the rest for the conclusion.
%
% This should be a one clause sentence. No need to
% explain why the natural next question is natural.

A direction for future work is to compute \( K_*(C_u^*(G,\mathcal E)) \) when \( H\leq G \) is an infinite-rank free abelian subgroup.

% ---------------------------------------- %
% (iii) Is it new? and organization.       %
% ---------------------------------------- %
% 
% The first sentence should be:
% "The paper is organized as follows."
%
% Better, if we can point to a specific section,
% then this can double as the organization of the
% paper. For example:
% 1. "We introduce in Section~\ref{sec:...} ...";
% 2. "We next introduce in section~\ref{sec:...} ...";
% 3. "We then introduce in section~\ref{:...} ...".
% That is, this can be an (at most) three sentence
% paragraph (after the introductory sentence). If only 
% two sentences, use structures 1 and 3. If only one 
% sentence, use structure one.
%
% These variations are also acceptable:
% - "In Section~\ref{sec:...} we introduce ..." only for
%   structure 1.
% - "introduce" may be replaced with "show" when we wish
%   to highlight a result which is only interesting as an
%   intermediate proof step, not as a standalone result.
%
% This should be one paragraph which higlights what 
% it is that we have done which no other published work 
% has ever done before.

The paper is organized as follows. In Section~\ref{sec:group-homology-of-ell-infinity(G,Z)}, we introduce an explicit formula for \( H_p(H;\ell^\infty(G,\mathbb Z)) \) for every subgroup \( H \cong \mathbb Z^n \leq G \) and every \( n, p\geq0 \). We next introduce, in Section~\ref{sec:the-spectral-sequence-after-taking-directed-colimits}, a spectral sequence whose \( E^2 \)-page is given by the group homology \( H_p(H;\ell^\infty(G,\mathbb Z)) \) and which converges to \( K_*(C_u^*(G,\mathcal E)) \), obtained from the Proietti--Yamashita spectral sequences by taking their directed colimit. We then introduce, in Section~\ref{sec:finite-rank-computation}, an explicit formula for \( K_*(C_u^*(G,\mathcal E)) \).

\section{Preliminaries}
\label{sec:preliminaries}

The remainder of the paper uses methods from coarse geometry and operator algebras. Since these subjects use various notational conventions in the literature, we first establish the notation used throughout the paper. We then recall the results needed in the proof of the main theorem.

\begin{notation}
\label{def:standing-notation}

We use the following notation and conventions throughout the paper.

\begin{itemize}
    \item For \( C^* \)-algebras \( A \) and \( B \), we write \( A\otimes B \) for their minimal tensor product.

    \item For a Hilbert space \( V \), we write \( \mathcal B(V) \) for the \( C^* \)-algebra of the bounded operators on \( V \).

    \item For an action \( \alpha\colon H\curvearrowright A \) of a countable discrete group \( H \) on a \( C^* \)-algebra \( A \), we write \( A\rtimes_{\alpha,r}H \) for the reduced crossed product. When the action is clear from the context, we write \( A\rtimes_r H \).

    \item For a second-countable Hausdorff ample groupoid \( \mathcal G \) and a \( \mathcal G \)-\( C^* \)-algebra \( A \), we write \( \mathcal G\ltimes_r A \) for the reduced crossed product.

    \item For a groupoid \( \mathcal G \), we write \( \mathcal G^{(0)} \) for its unit space. If \( \mathcal G \) is second-countable, Hausdorff, and ample, we write \( C_r^*(\mathcal G) \) for its reduced groupoid \( C^* \)-algebra.

    \item For a groupoid \( \mathcal G \) and \( \mathcal G \)-\( C^* \)-algebras \( A \) and \( B \), we write \( KK^{\mathcal G}(A,B) \) for the equivariant Kasparov group.

    \item For a countable discrete group \( H \), a right \( \mathbb Z H \)-module \( M \), and \( p\geq 0 \), we write \( H_p(H;M) \) for the \( p \)-th group homology group of \( H \) with coefficients in \( M \).

    \item For a set \( G \), we write \( \ell^\infty(G)=\ell^\infty(G,\mathbb C) \).

    \item For a second-countable Hausdorff ample groupoid \( \mathcal G \), a \( \mathcal G \)-equivariant sheaf \( M \), and \( p\geq 0 \), we write \( H_p(\mathcal G;M) \) for the \( p \)-th groupoid homology group of \( \mathcal G \) with coefficients in \( M \). We write \( H_p(\mathcal G)=H_p(\mathcal G;\mathbb Z) \).

    \item For a \( C^* \)-algebra \( A \) and \( q\in\mathbb Z \), we write \( K_q(A) \) for the \( q \)-th periodic complex \( K \)-theory group of \( A \). If \( A \) is a \( \mathcal G \)-\( C^* \)-algebra, the same notation denotes the associated \( \mathcal G \)-equivariant \( K \)-theory sheaf when it appears as a coefficient in groupoid homology.
\end{itemize}
\end{notation}

We begin by recalling the definition of a coarse space and the associated notion of uniform local finiteness.

\begin{definition}
\label{def:coarse-space}
A \emph{coarse structure} on a set \( X \) is a collection \( \mathcal E \) of subsets of \( X\times X \) satisfying the following conditions.
\begin{enumerate}
    \item The diagonal \( \Delta_X=\{(x,x):x\in X\} \) belongs to \( \mathcal E \).
    \item If \( E\in\mathcal E \), then \( E^{-1}\in\mathcal E \), where \( E^{-1}=\{(y,x):(x,y)\in E\} \).
    \item If \( E\in\mathcal E \) and \( F\subseteq E \), then \( F\in\mathcal E \).
    \item If \( E,F\in\mathcal E \), then \( E\cup F\in\mathcal E \).
    \item If \( E,F\in\mathcal E \), then \( E\circ F\in\mathcal E \), where 
    \[ 
    E\circ F=\{(x,z):\text{there exists }y\in X\text{ such that }(x,y)\in E\text{ and }(y,z)\in F\} .
    \]
\end{enumerate}
We call the elements of \( \mathcal E \) \emph{controlled sets} and the pair \( (X,\mathcal E) \) a \emph{coarse space}. We say that \( (X,\mathcal E) \) is \emph{uniformly locally finite} if, for every \( E\in\mathcal E \), one has \( \sup_{x\in X}|E[x]|<\infty \), where \( E[x]=\{y\in X:(x,y)\in E\} \).
\end{definition}

Let \( G \) be a countable discrete abelian group and let \( H\leq G \) be free abelian of finite rank \( n \). Equip \( G \) with the coarse structure \( \mathcal E \) generated by the sets \( \{(g,k)\in G\times G:g-k\in F\} \), where \( F \) ranges over the finite subsets of \( H \). The coarse space \( (G,\mathcal E) \) is uniformly locally finite.

Let \( (\delta_g)_{g\in G} \) denote the standard orthonormal basis of \( \ell^2(G) \). Every operator \( T\in\mathcal B(\ell^2(G)) \) has matrix coefficients \( \langle T\delta_k,\delta_g\rangle \). The support of \( T \) is
\[
\operatorname{supp}(T)=\{(g,k)\in G\times G:\langle T\delta_k,\delta_g\rangle\neq0\}.
\]
If \( \operatorname{supp}(T)\in\mathcal E \), we say that \( T \) has controlled propagation. Equivalently, \( T \) has controlled propagation if and only if there exists a finite subset \( F\subseteq H \) such that \( \langle T\delta_k,\delta_g\rangle=0 \) whenever \( g-k\notin F \). The uniform Roe algebra of the coarse space \( (G,\mathcal E) \) is the operator norm closure in \( \mathcal B(\ell^2(G)) \) of the \( ^* \)-algebra of controlled propagation operators. We denote it by \( C_u^*(G,\mathcal E) \), or simply by \( C_u^*(G) \) when no confusion can arise.

The right action of \( H \) on \( G \) is given by \( g\cdot h=g+h \). This induces right actions on \( \ell^\infty(G,\mathbb C) \) and \( \ell^\infty(G,\mathbb Z) \) by \( (f\cdot h)(g)=f(g-h) \). We write \( \alpha_h(f)=f\cdot h \) for the automorphism used in the crossed product \( \ell^\infty(G)\rtimes_{\alpha,r}H \). More generally, if \( X \) is a topological right \( H \)-space, then the induced right action on \( C(X,\mathbb Z) \) is given by \( (f\cdot h)(x)=f(x\cdot(-h)) \).

The proof of the main theorem uses the following spectral sequence of Proietti and Yamashita \cite[Theorem~3.3]{proietti2022homology}.

\begin{proposition}
\label{prop:spectral-sequence}
Let \( \mathcal G \) be a second-countable Hausdorff ample groupoid with torsion-free stabilizers, and suppose that \( \mathcal G \) satisfies the strong Baum--Connes conjecture. Let \( A \) be a separable \( \mathcal G \)-\( C^* \)-algebra. Then there is a convergent spectral sequence
\begin{equation}
\label{eq:proietti-yamashita}
E^2_{p,q}
\cong
H_p(\mathcal G;K_q(A))
\Longrightarrow
K_{p+q}(\mathcal G\ltimes_r A),
\qquad
p\geq 0,\quad q\in\mathbb Z.
\end{equation}
\end{proposition}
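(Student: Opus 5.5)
We would not reprove this from scratch; we would follow the Meyer--Nest framework of homological algebra in triangulated categories, applied to the equivariant Kasparov category \( KK^{\mathcal G} \), which is the route of \cite{proietti2022homology}. The plan is to realise the \( E^2 \)-page as derived functors of \( K_*(\mathcal G\ltimes_r -) \) relative to a homological ideal, and then to identify those derived functors with groupoid homology.

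\textbf{Step 1: the homological ideal and projective resolutions.} Let \( X=\mathcal G^{(0)} \), a totally disconnected space. Let \( \mathfrak I \) be the kernel of the restriction functor \( \operatorname{Res}\colon KK^{\mathcal G}\to KK^{X} \). Its left adjoint is the induction functor \( \operatorname{Ind}_X^{\mathcal G} \). By Meyer--Nest, the objects \( \operatorname{Ind}_X^{\mathcal G}B \) are \( \mathfrak I \)-projective, and \( \mathfrak I \) has enough projectives. Iterating the unit and counit of the adjunction gives a canonical \( \mathfrak I \)-projective resolution of \( A \). Its terms are \( \operatorname{Ind}(\operatorname{Res}\operatorname{Ind})^{k}\operatorname{Res}A \), which are \( C_0(\mathcal G^{(k+1)}) \)-type algebras built from the spaces of composable \( (k{+}1) \)-tuples. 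This is the analogue of the bar resolution.

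\textbf{Step 2: the ABC spectral sequence.} Take the phantom tower attached to this resolution and apply the homological functor \( F=K_*(\mathcal G\ltimes_r -) \). This yields a spectral sequence with \( E^2_{p,q}=\mathbb L_pF_q(A) \), the left derived functors of \( F \) relative to \( \mathfrak I \). It converges to \( F(\tilde A) \), where \( \tilde A\to A \) is the cellular approximation of \( A \) in the localizing subcategory generated by the \( \mathfrak I \)-projectives.

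\textbf{Step 3: convergence to the right target.} This is where the two hypotheses enter. Torsion-freeness of the stabilisers means the compact-open subgroupoids relevant to Baum--Connes are just the unit space. Hence the localizing subcategory generated by \( \operatorname{Ind}_X^{\mathcal G} \) coincides with the one used in the Baum--Connes assembly map. The strong Baum--Connes conjecture says that \( \tilde A\to A \) is a \( KK^{\mathcal G} \)-equivalence, so \( F(\tilde A)\cong K_*(\mathcal G\ltimes_r A) \). We also need to control convergence for an infinite resolution. For this we would use the finiteness result of Proietti--Yamashita: the tower stabilises up to a direct limit, and one can pass to \( \varinjlim \) as in Meyer's treatment.

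\textbf{Step 4: identification of \( E^2 \), the main obstacle.} We apply \( F \) to the bar-type resolution. Each term \( \mathcal G\ltimes_r\operatorname{Ind}_X^{\mathcal G}(\cdot) \) is Morita equivalent to an algebra over the totally disconnected space \( \mathcal G^{(k)} \). Its \( K \)-theory is therefore the group of compactly supported sections \( C_c(\mathcal G^{(k)},K_q(A)) \) of the \( K_q(A) \)-sheaf, since \( K \)-theory of algebras over totally disconnected spaces is computed on compact-open pieces. We must then check that the induced differentials are the alternating sums of the face maps, that is, push-forwards along the maps \( d_i\colon\mathcal G^{(k)}\to\mathcal G^{(k-1)} \). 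Once that is done, the resulting complex is exactly the Crainic--Moerdijk/Matui complex computing \( H_p(\mathcal G;K_q(A)) \). Verifying this compatibility of the Kasparov face maps with the sheaf push-forwards, naturally in \( q \), is the delicate bookkeeping step. We would do it first for \( A=C_0(X) \) and then extend by naturality in \( A \).
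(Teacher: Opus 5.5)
The paper gives no argument of its own for this statement; it quotes Proietti--Yamashita's Theorem~3.3 verbatim. Your outline (the ideal \( \ker\operatorname{Res}^{\mathcal G}_{\mathcal G^{(0)}} \) with \( \operatorname{Ind}\dashv\operatorname{Res} \), the bar-type \( \mathfrak I \)-projective resolution, Meyer's ABC spectral sequence, torsion-freeness plus strong Baum--Connes to identify the abutment with \( K_*(\mathcal G\ltimes_r A) \), and identification of the \( E^2 \)-page with the Crainic--Moerdijk/Matui complex) is a faithful reconstruction of their argument, so this is the same approach.

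One correction to Step~3: convergence is not controlled by any finiteness of the tower, since the \( E^2 \)-page can be nonzero in infinitely many columns (for instance \( \mathcal G=\bigoplus_{\mathbb N}\mathbb Z \), with \( H_p(\mathcal G)\cong\Lambda^p\bigl(\bigoplus_{\mathbb N}\mathbb Z\bigr)\neq0 \) for all \( p \)). It comes instead from Meyer's convergence theorem for the ABC spectral sequence of a homological functor compatible with countable direct sums, which \( K_*(\mathcal G\ltimes_r-) \) is, with abutment its value on the cellular approximation.
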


The proof of the main theorem requires two consequences of Proposition~\ref{prop:spectral-sequence}. The first is its specialization to reduced groupoid \( C^* \)-algebras. The second is the functoriality of the spectral sequence under \'{e}tale correspondences.

Applying Proposition~\ref{prop:spectral-sequence} to \( A=C_0(\mathcal G^{(0)}) \), the isomorphism \( \mathcal G\ltimes_r C_0(\mathcal G^{(0)})\cong C_r^*(\mathcal G) \) identifies the abutment with \( K_{p+q}(C_r^*(\mathcal G)) \). By \cite[Proposition~3.1]{proietti2022homology}, the coefficient sheaf associated with \( K_q(C_0(\mathcal G^{(0)})) \) is the constant sheaf with fiber \( K_q(\mathbb C) \). Proposition~\ref{prop:spectral-sequence} therefore specializes to the convergent spectral sequence
\begin{equation}
\label{eq:groupoid-homology-spectral-sequence}
E^2_{p,q}(\mathcal G)
\cong
H_p(\mathcal G;K_q(\mathbb C))
\Longrightarrow
K_{p+q}(C_r^*(\mathcal G)).
\end{equation}
By Bott periodicity, \( K_q(\mathbb C)\cong\mathbb Z \) for even \( q \) and \( K_q(\mathbb C)=0 \) for odd \( q \). Therefore,
\begin{equation}
\label{eq:groupoid-homology-e2}
E^2_{p,q}(\mathcal G)
\cong
\begin{cases}
H_p(\mathcal G), & q \text{ is even},\\
0, & q \text{ is odd}.
\end{cases}
\end{equation}
Every differential \( d_2 \) has either a trivial domain or a trivial codomain. Hence \( d_2=0 \), and therefore \( E^2(\mathcal G)=E^3(\mathcal G) \).

The proof of the main theorem also uses the following consequence of \cite[Proposition~3.22, Remark~4.6, Theorem~5.14, Theorem~6.1]{miller2025isomorphisms}.

\begin{proposition}
\label{prop:miller-functoriality}
Let \( \mathcal G \) and \( \mathcal H \) be second-countable Hausdorff ample groupoids, and let \( \Omega \colon \mathcal G \to \mathcal H \) be a second-countable Hausdorff \'{e}tale correspondence.

Let \( A \) be a separable \( \mathcal G \)-\( C^* \)-algebra, let \( B \) be a separable \( \mathcal H \)-\( C^* \)-algebra, and let \( f\in KK^{\mathcal G}(A,\operatorname{Ind}_{\Omega}B) \). For \( i\in\{0,1\} \), let \( K_i(f)\colon K_i(A)\longrightarrow\operatorname{Ind}_{\Omega}K_i(B) \) denote the homomorphism induced by \( f \), composed with the inverse of the natural isomorphism
\[
\zeta_{\Omega,B}\colon
\operatorname{Ind}_{\Omega}K_i(B)
\longrightarrow
K_i(\operatorname{Ind}_{\Omega}B).
\]
For \( q\in\mathbb Z \), we define \( K_q(f) \) from \( K_0(f) \) and \( K_1(f) \) through the Bott periodicity isomorphisms. Then the following statements hold.
\begin{enumerate}
\item For every \( r\ge2 \), the correspondence \( \Omega \) and the class \( f \) induce functorially a morphism \( E^r_{p,q}(\mathcal G;A)\to E^r_{p,q}(\mathcal H;B) \).

\item Under the identification of the \( E^2 \)-pages with groupoid homology, the induced map is \( H_p(\Omega;K_q(f))\colon H_p(\mathcal G;K_q(A))\to H_p(\mathcal H;K_q(B)) \).

\item The induced map on the limit sheet agrees with the localisation map of the associated ABC morphism.
\end{enumerate}
\end{proposition}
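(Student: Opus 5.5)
The plan is to obtain all three statements by assembling the cited results of \cite{miller2025isomorphisms} within the Meyer--Nest ABC framework, where the Proietti--Yamashita spectral sequence of Proposition~\ref{prop:spectral-sequence} is realized. Recall that for an ample groupoid \( \mathcal G \), the sequence \( E^r_{p,q}(\mathcal G;A) \) comes from a phantom tower, that is, a projective resolution of \( A \) in the triangulated category \( KK^{\mathcal G} \) relative to the homological ideal \( \mathfrak I_{\mathcal G} \). Here \( \mathfrak I_{\mathcal G} \) is the kernel of restriction to the unit space \( \mathcal G^{(0)} \), and the projective objects are generated by the algebras induced from \( \mathcal G^{(0)} \). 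The first step is to record from \cite[Proposition~3.22]{miller2025isomorphisms} that the correspondence \( \Omega \) yields a triangulated induction functor \( \operatorname{Ind}_\Omega\colon KK^{\mathcal H}\to KK^{\mathcal G} \). This functor maps \( \mathfrak I_{\mathcal H} \)-projective objects to \( \mathfrak I_{\mathcal G} \)-projective objects and maps \( \mathfrak I_{\mathcal H} \)-phantom morphisms to \( \mathfrak I_{\mathcal G} \)-phantom morphisms. Consequently, \( \operatorname{Ind}_\Omega \) applied to a phantom tower for \( B \) is again a phantom tower, now for \( \operatorname{Ind}_\Omega B \).

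Next, I would lift \( f\in KK^{\mathcal G}(A,\operatorname{Ind}_\Omega B) \) to a morphism of phantom towers, from a tower for \( A \) to the induced tower for \( \operatorname{Ind}_\Omega B \). This lift exists because projective resolutions are functorial up to homotopy and unique up to homotopy. Applying the crossed-product \( K \)-theory functors then gives a morphism of exact couples. To reach the \( \mathcal H \)-side, I would compose with the natural comparison \( K_*(\mathcal G\ltimes_r\operatorname{Ind}_\Omega(\,\cdot\,))\to K_*(\mathcal H\ltimes_r(\,\cdot\,)) \) coming from the imprimitivity/descent map for \( \Omega \), as in \cite[Remark~4.6]{miller2025isomorphisms}. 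A morphism of exact couples induces morphisms \( E^r\to E^{r+1}\to\cdots \) compatible with differentials for all \( r\ge2 \). Independence of the chosen lift, and hence functoriality, follows because homotopic chain maps induce the same map from \( E^2 \) onward. This proves statement (1).

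For statement (2), I would invoke \cite[Theorem~5.14]{miller2025isomorphisms}, which identifies the \( E^2 \)-term with the homology of the complex \( K_q(P_\bullet) \). That complex is a Moore-type chain complex computing \( H_p(\mathcal G;K_q(A)) \). On this complex, the map induced by \( f \) and \( \Omega \) must be shown to equal the chain map defining \( H_p(\Omega;\,\cdot\,) \), applied to the sheaf morphism \( K_q(f) \), where \( K_q(f) \) has been corrected by \( \zeta_{\Omega,B}^{-1} \). The \( q \) odd and \( q \) even cases both reduce to \( q\in\{0,1\} \) by Bott periodicity. For statement (3), \cite[Theorem~6.1]{miller2025isomorphisms} identifies the abutment with the localization \( \mathbb L(\mathcal G\ltimes_r-) \), which is the ABC assembly. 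Since the morphism of towers is compatible with the filtrations, the induced map on \( E^\infty \) is the graded map associated with \( \mathbb L \) applied to the ABC morphism.

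I expect the main obstacle to be the identification in statement (2). The difficulty is checking that the natural isomorphism \( \zeta_{\Omega,B} \) intertwines the chain-level descent map with the sheaf-level induction used to define \( H_p(\Omega;\,\cdot\,) \). I would first verify this on induced algebras \( \operatorname{Ind}_{\mathcal G^{(0)}}^{\mathcal G}D \), where both sides are computed explicitly by compactly supported sections. I would then extend to general \( A \) using naturality in the projective resolution.
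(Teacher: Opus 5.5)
The paper gives no argument of its own for this proposition. It imports the statement as a consequence of \cite[Proposition~3.22, Remark~4.6, Theorem~5.14, Theorem~6.1]{miller2025isomorphisms}, so assembling those results is the same route. The trouble is in your reconstruction of what those results provide: two steps in it are false in the generality stated.

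First, $\operatorname{Ind}_\Omega$ does not send $\mathfrak I_{\mathcal H}$-projective objects to $\mathfrak I_{\mathcal G}$-projective objects, because nothing constrains the left $\mathcal G$-action on $\Omega$. Take $\mathcal G=\mathbb Z$, $\mathcal H$ trivial, and $\Omega$ a point; this is even a proper \'{e}tale correspondence. Every object of $KK^{\mathcal H}=KK$ is projective, since $\mathfrak I_{\mathcal H}=0$. Yet $\operatorname{Ind}_\Omega\mathbb C$ is $\mathbb C$ with the trivial $\mathbb Z$-action, and this is not projective: $E^2_{1,0}=H_1(\mathbb Z;\mathbb Z)\cong\mathbb Z$ is nonzero, whereas derived functors vanish in positive degrees on projectives. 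So the induced tower is not a phantom tower, and the comparison theorem for projective resolutions does not apply as you state it. What you actually need is only $\operatorname{Ind}_\Omega(\mathfrak I_{\mathcal H})\subseteq\mathfrak I_{\mathcal G}$, which is what compatibility of the unit-space families gives. The lift of $f$ can then be built inductively, because the inductive step uses only two facts: the tower for $A$ has projective entries, and the structure maps of the target tower are phantom.

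Second, the comparison $K_*(\mathcal G\ltimes_r\operatorname{Ind}_\Omega(-))\to K_*(\mathcal H\ltimes_r(-))$ you compose with need not exist. Let $\mathcal G$ be a non-amenable group acting trivially on a point $\Omega$, with $\mathcal H$ trivial. The descent correspondence would then require the trivial representation to factor through $C_r^*(\mathcal G)$, which it does not. This is why statement~(3) is phrased via the localisation. It is also why the paper later needs amenability (the factorisation hypothesis of \cite[Example~5.11]{miller2025isomorphisms}) together with Baum--Connes to identify the abutment map with $K_*(\pi_{BA}^*\rtimes_rH)$. The fix is to work with full crossed products and the correspondence $C^*(\Omega)$. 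This loses nothing: on algebras induced from the unit space, full and reduced crossed products have the same $K$-theory, so the two ABC spectral sequences agree on every page $E^r$, $r\geq2$, and have the same localisation.

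Finally, independence of the lift gives only well-definedness, not functoriality in $(\Omega,f)$. Functoriality also needs induction and descent to be compatible with composition of correspondences. The paper takes this from \cite[Remark~5.5, Example~5.8, Theorem~6.1]{miller2025isomorphisms}.
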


\section{\texorpdfstring{The \( K \)-theory of the uniform Roe algebra}{The K-theory of the uniform Roe algebra}}
\label{sec:uniform-roe-k-theory}

% One introductory paragraph:
% "In this section, we..."

In this section, we compute the \( K \)-theory of \( C_u^*(G,\mathcal E) \). We first identify this algebra with \( \ell^\infty(G)\rtimes_rH \). We then approximate the coefficient \( C^* \)-algebra \( \ell^\infty(G) \) by separable \( H \)-invariant subalgebras and apply Proposition~\ref{prop:spectral-sequence} to the associated transformation groupoids. Next, we pass to the directed colimit, identify the \( E^2 \)-page by computing \( H_p(H;\ell^\infty(G,\mathbb Z)) \), prove that the spectral sequence degenerates integrally, and split the resulting finite filtration to obtain an explicit description of the \( K \)-groups.

\subsection{The uniform Roe algebra as a crossed product}

The following proposition is the analog of \cite[Proposition~5.1.3]{roe2003lectures} for the coarse structure determined by \( H \). We include a proof because we use this identification throughout the section.

We use the standard covariant representation of \( \ell^\infty(G)\rtimes_rH \) on \( \ell^2(G) \). For \( f\in\ell^\infty(G) \), let \( M_f \) denote multiplication by \( f \). For \( h\in H \), let \( \lambda_h \) denote the translation unitary defined by \( \lambda_h\delta_g=\delta_{g+h} \). These operators satisfy the covariance relation \( \lambda_hM_f\lambda_h^*=M_{f\cdot h} \).

\begin{proposition}
\label{prop:uniform-roe-crossed-product}
The covariant representation given by \( f\mapsto M_f \) and \( h\mapsto\lambda_h \) induces an isomorphism \( \ell^\infty(G)\rtimes_rH\cong C_u^*(G,\mathcal E) \).
\end{proposition}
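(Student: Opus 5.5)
The plan is to follow the proof of \cite[Proposition~5.1.3]{roe2003lectures}. First I will identify the image of the algebraic crossed product, then show that the image of the reduced crossed product is exactly $C_u^*(G,\mathcal E)$, and finally check that the representation is faithful on the reduced crossed product.

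\emph{Step 1: the algebraic crossed product maps onto the controlled operators.} Consider the integrated form
\[
\pi\Bigl(\sum_{h\in F} f_h u_h\Bigr)=\sum_{h\in F} M_{f_h}\lambda_h ,
\]
where $F\subseteq H$ is finite. Since $\langle M_{f_h}\lambda_h\delta_k,\delta_g\rangle=f_h(g)$ when $g-k=h$ and is $0$ otherwise, each such sum has controlled propagation. Conversely, let $T$ be supported in $\{(g,k):g-k\in F\}$ for some finite $F\subseteq H$. Set $f_h(g)=\langle T\delta_{g-h},\delta_g\rangle$. Then $|f_h(g)|\le\|T\|$, so $f_h\in\ell^\infty(G)$, and $T=\sum_{h\in F}M_{f_h}\lambda_h$. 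Hence $\pi$ maps the algebraic crossed product onto the $^*$-algebra of controlled propagation operators. Because $\pi$ is contractive for the reduced norm, the image of $\ell^\infty(G)\rtimes_rH$ is contained in $C_u^*(G,\mathcal E)$ and is dense in it. Since a $^*$-homomorphism between $C^*$-algebras has closed range, surjectivity then follows once injectivity (Step 2) is established.

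\emph{Step 2: $\pi$ is faithful on the reduced crossed product.} This is the main point. I will use the standard criterion: a covariant representation $(\pi_0,\lambda)$ induces a faithful representation of $A\rtimes_rH$ if $\pi_0$ is faithful and there is a conditional expectation on the image compatible with the canonical one. Concretely, I will argue as follows.

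Let $P_g$ be the rank-one projection onto $\mathbb C\delta_g$ and define
\[
E(T)=\sum_{g\in G}P_gTP_g
\]
(strong sum), which is multiplication by the diagonal of $T$. $E$ is a contractive, positive, faithful map on $\mathcal B(\ell^2(G))$. On elements of the form $\pi(\sum f_hu_h)$, the diagonal picks out exactly the $h=0$ term, since $g-k=0$ forces $h=0$. Hence
\[
E\circ\pi=\pi\circ\mathbb E ,
\]
where $\mathbb E$ is the canonical faithful conditional expectation $\ell^\infty(G)\rtimes_rH\to\ell^\infty(G)$. This identity extends by continuity to the whole reduced crossed product. Now suppose $\pi(x)=0$. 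Then
\[
M_{\mathbb E(x^*x)}=E(\pi(x^*x))=0 .
\]
Since $f\mapsto M_f$ is faithful, $\mathbb E(x^*x)=0$, and faithfulness of $\mathbb E$ on the reduced crossed product gives $x=0$.

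The expected obstacle is only being careful that $\mathbb E$ is faithful on the reduced crossed product, which is standard. An alternative route is to note that the representation is unitarily equivalent to a subrepresentation of the regular representation restricted to cosets. Note that $H$ is amenable, so full and reduced crossed products agree, which gives a further check. Combining Steps 1 and 2, $\pi$ is an isometric $^*$-isomorphism onto $C_u^*(G,\mathcal E)$.
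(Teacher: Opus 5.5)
Your proposal is correct and follows essentially the paper's argument: the decomposition $T=\sum_{h\in F}M_{f_h}\lambda_h$ with $f_h(g)=\langle T\delta_{g-h},\delta_g\rangle$ shows the image is dense in $C_u^*(G,\mathcal E)$. The identity between the diagonal expectation composed with the representation and the canonical faithful expectation onto $\ell^\infty(G)$ gives injectivity, and closed range then forces the image to be all of $C_u^*(G,\mathcal E)$. Your remark that $H$ is amenable is a useful addition, since it is what lets the integrated form be regarded as defined on the reduced crossed product, a point the paper leaves implicit.
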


\begin{proof}
Let \( \Pi:\ell^\infty(G)\rtimes_rH\to\mathcal B(\ell^2(G)) \) denote the integrated form of this covariant representation. We first show that the image of \( \Pi \) contains every controlled propagation operator.

Let \( T \) have controlled propagation. Then there exists a finite subset \( F\subseteq H \) such that \( \langle T\delta_k,\delta_g\rangle=0 \) whenever \( g-k\notin F \). For each \( h\in F \), define \( f_h(g)=\langle T\delta_{g-h},\delta_g\rangle \). Since \( |f_h(g)|\le\|T\| \) for every \( g\in G \), we have \( f_h\in\ell^\infty(G) \).

We have
\begin{equation}
\label{eq:controlled-operator-decomposition}
T=\sum_{h\in F}M_{f_h}\lambda_h.
\end{equation}
For \( g,k\in G \),
\[
\left\langle
\left(\sum_{h\in F}M_{f_h}\lambda_h\right)\delta_k,
\delta_g
\right\rangle
=
\begin{cases}
f_{g-k}(g), & g-k\in F,\\
0, & g-k\notin F.
\end{cases}
\]
If \( g-k\in F \), then \( f_{g-k}(g)=\langle T\delta_k,\delta_g\rangle \). If \( g-k\notin F \), both matrix coefficients vanish. Hence Equation~\eqref{eq:controlled-operator-decomposition} holds, so every controlled propagation operator belongs to the image of \( \Pi \).

Conversely, \( M_f\lambda_h\delta_k=f(k+h)\delta_{k+h} \), so \( M_f\lambda_h \) has controlled propagation. Hence every element of the algebraic crossed product belongs to \( C_u^*(G,\mathcal E) \), and continuity gives \( \Pi(\ell^\infty(G)\rtimes_rH)\subseteq C_u^*(G,\mathcal E) \).

It remains to prove that \( \Pi \) is faithful. We establish this by showing that the diagonal conditional expectation after applying \( \Pi \) equals the canonical conditional expectation on the reduced crossed product. We then identify \( \ell^\infty(G) \) with its multiplication representation on \( \ell^2(G) \). 

Let \( E:\ell^\infty(G)\rtimes_rH\to\ell^\infty(G) \) denote the canonical faithful conditional expectation, and let \( D:C_u^*(G,\mathcal E)\to\ell^\infty(G) \) denote the diagonal conditional expectation given by \( D(T)(g)=\langle T\delta_g,\delta_g\rangle \).

For every \( a \) in the algebraic crossed product, both \( D(\Pi(a)) \) and \( E(a) \) equal the coefficient of \( a \) at the identity element \( 0\in H \). Since both maps are continuous, we have \( D\circ\Pi=E \).

If \( a\in\ker\Pi \), then \( \Pi(a^*a)=0 \), so \( E(a^*a)=D(\Pi(a^*a))=0 \). Since \( E \) is faithful, we obtain \( a=0 \). Thus \( \Pi \) is faithful, so its image is closed. Since this image contains the dense \( ^* \)-subalgebra of controlled propagation operators, we obtain \( \Pi(\ell^\infty(G)\rtimes_rH)=C_u^*(G,\mathcal E) \).
\end{proof}

\subsection{A separable directed system}

To apply Proposition~\ref{prop:spectral-sequence} at separable stages, we construct a directed system of coefficient algebras whose colimit will be identified with \( \ell^\infty(G) \). Let \( \mathcal S \) consist of all separable, unital, \( H \)-invariant \( C^* \)-subalgebras of \( \ell^\infty(G) \) that are generated by countably many projections. We order \( \mathcal S \) by inclusion.

\begin{lemma}
\label{lem:stage-system-directed}
The partially ordered set \( \mathcal S \) is directed.
\end{lemma}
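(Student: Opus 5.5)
Given $A_1,A_2\in\mathcal S$, I will exhibit some $A_3\in\mathcal S$ containing both. The natural candidate is $A_3=C^*(A_1\cup A_2)$, the $C^*$-subalgebra of $\ell^\infty(G)$ generated by $A_1\cup A_2$. Since $\ell^\infty(G)$ is commutative, $A_3$ is again commutative.

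First I fix countable families of projections $P_1\subseteq A_1$ and $P_2\subseteq A_2$ that generate $A_1$ and $A_2$ respectively. Then $A_3$ is generated by the countable set of projections $P_1\cup P_2$.

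Next I check the remaining properties of $A_3$.

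1. It is unital, because the unit $1\in\ell^\infty(G)$ already lies in $A_1$.
2. It is separable, because a $C^*$-algebra generated by a countable set is the norm closure of the $*$-polynomials with rational complex coefficients in that set, and this set of polynomials is countable.
3. It is $H$-invariant. Each automorphism $\alpha_h$ is a $*$-automorphism of $\ell^\infty(G)$ with $\alpha_h(A_i)=A_i$, so $\alpha_h(A_3)$ is the $C^*$-algebra generated by $\alpha_h(A_1)\cup\alpha_h(A_2)=A_1\cup A_2$. Hence $\alpha_h(A_3)=A_3$.

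It follows that $A_3\in\mathcal S$ and $A_1,A_2\subseteq A_3$. Finally, $\mathcal S$ is nonempty because $\mathbb C1$ belongs to it: it is generated by the single projection $1$, and it is $H$-invariant. Therefore $\mathcal S$ is directed.

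No step here is a real obstacle. The only point needing care is the $H$-invariance of the generated algebra, and that is handled by the observation that $*$-automorphisms map generated $C^*$-subalgebras onto the $C^*$-subalgebras generated by the images.
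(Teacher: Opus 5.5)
Your proposal is correct and follows the paper's own proof, which likewise takes $C^*(A\cup B)$ and notes that it is unital, $H$-invariant, and generated by the union of the two countable generating families of projections. Your explicit checks of separability and of nonemptiness of $\mathcal S$ (via $\mathbb C1$) are details the paper leaves implicit.
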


\begin{proof}
Let \( A,B\in\mathcal S \). The algebra \( C^*(A\cup B) \) is unital, \( H \)-invariant, and generated by the union of countable sets of projections that generate \( A \) and \( B \), respectively. Hence \( C^*(A\cup B)\in\mathcal S \) and contains both \( A \) and \( B \).
\end{proof}

Since every \( A\in\mathcal S \) is separable, unital, and generated by projections, its Gelfand spectrum \( \widehat{A} \) is compact, metrizable, and totally disconnected. Therefore
\begin{equation}
\label{eq:stage-coefficient-k-theory}
K_0(A)\cong C(\widehat{A},\mathbb Z),
\qquad
K_1(A)=0.
\end{equation}
The isomorphism \( K_0(A)\cong C(\widehat{A},\mathbb Z) \) is natural for pullback maps.

If \( A\subseteq B \), then the inclusion \( A\to B \) is the pullback map induced by the continuous \( H \)-equivariant surjection \( \widehat{B}\to\widehat{A} \).

\begin{lemma}
\label{lem:coefficient-colimits}
There are canonical isomorphisms
\begin{equation}
\label{eq:coefficient-stage-colimit}
\varinjlim_{A\in\mathcal S}A
\cong
\ell^\infty(G).
\end{equation}

\begin{equation}
\label{eq:integer-stage-colimit}
\varinjlim_{A\in\mathcal S}C(\widehat{A},\mathbb Z)
\cong
\ell^\infty(G,\mathbb Z).
\end{equation}
Consequently,
\[
\varinjlim_{A\in\mathcal S}K_0(A)\cong\ell^\infty(G,\mathbb Z).
\]
\end{lemma}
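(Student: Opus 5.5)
The plan is to compare both colimits with their targets through the canonical inclusion maps and check that these maps are injective with dense (respectively, full) image. Since $\mathcal S$ is directed (Lemma~\ref{lem:stage-system-directed}) and all connecting maps are inclusions of subalgebras of $\ell^\infty(G)$, the $C^*$-algebraic directed colimit $\varinjlim_{A\in\mathcal S}A$ is the norm closure of $\bigcup_{A\in\mathcal S}A$ inside $\ell^\infty(G)$. The inclusions are isometric, so the canonical map from the colimit is isometric. For \eqref{eq:coefficient-stage-colimit} it therefore suffices to show that $\bigcup_{A\in\mathcal S}A$ is all of $\ell^\infty(G)$, or at least dense in it.

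For \eqref{eq:coefficient-stage-colimit}, take $f\in\ell^\infty(G)$. Uniform approximation of $f$ by finitely-valued functions shows that $f$ lies in the closed span of countably many projections $1_{S_i}$, where each $S_i\subseteq G$ is a level set of a step approximation. Let $A_f$ be the unital $C^*$-algebra generated by the projections $1_{S_i+h}$ for $i\in\mathbb N$ and $h\in H$. This is a countable family because $H$ is countable. The family is $H$-invariant, since $1_{S}\cdot h=1_{S+h}$, so $A_f$ is $H$-invariant. It is also separable and generated by countably many projections. Hence $A_f\in\mathcal S$ and $f\in A_f$. Thus the union of the stages is already all of $\ell^\infty(G)$, which gives \eqref{eq:coefficient-stage-colimit}.

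For \eqref{eq:integer-stage-colimit}, identify $C(\widehat A,\mathbb Z)$ with the subgroup of $A$ consisting of integer-valued elements. Gelfand duality sends a continuous $\mathbb Z$-valued function on $\widehat A$ to a self-adjoint element of $A$ whose spectrum is a finite subset of $\mathbb Z$. Such an element is a bounded $\mathbb Z$-valued function on $G$. Conversely, an element of $A$ taking integer values has finite spectrum in $\mathbb Z$, because it is bounded, so it corresponds to a continuous $\mathbb Z$-valued function on $\widehat A$. These identifications are compatible with the pullback maps $C(\widehat A,\mathbb Z)\to C(\widehat B,\mathbb Z)$ for $A\subseteq B$, since those pullback maps are just the inclusions $A\subseteq B$. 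The algebraic colimit of a directed system of subgroups with inclusions as connecting maps is their union. It remains to check that every $f\in\ell^\infty(G,\mathbb Z)$ lies in some stage. Such an $f$ takes finitely many values, so $f=\sum_{m}m\,1_{f^{-1}(m)}$ is a finite sum. The algebra generated by the $H$-translates of the finitely many projections $1_{f^{-1}(m)}$ lies in $\mathcal S$ and contains $f$.

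Finally, the statement about $K_0$ follows by combining \eqref{eq:integer-stage-colimit} with the natural isomorphisms \eqref{eq:stage-coefficient-k-theory}. Their naturality for pullbacks makes them an isomorphism of directed systems, so the colimits agree. One can also cross-check this via continuity of $K_0$ under colimits together with \eqref{eq:coefficient-stage-colimit}. The main point needing care is the integer-valued step: one must verify that the identification of $C(\widehat A,\mathbb Z)$ with the integer-valued elements of $A$ is exactly the natural $K_0$ isomorphism, i.e. that it is compatible with the maps between stages. Apart from that, the argument is routine bookkeeping with countable $H$-orbits of projections.
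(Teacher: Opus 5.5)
Your proposal is correct and follows the paper's proof essentially step for step. Both arguments show that every bounded function, and every bounded integer-valued function, lies in a single stage generated by the countable $H$-orbit of its level-set projections. Both identify $C(\widehat A,\mathbb Z)$ with the integer-valued elements of $A$ via the Gelfand transform, so the connecting maps are inclusions, and then pass to colimits through the natural isomorphisms $K_0(A)\cong C(\widehat A,\mathbb Z)$.

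The compatibility you flag at the end needs nothing beyond the naturality of these isomorphisms under pullbacks. The paper records that naturality just before the lemma, so no further verification is required.
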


\begin{proof}
We first prove Equation~\eqref{eq:coefficient-stage-colimit}. Since every connecting map is an inclusion, it suffices to show that every bounded function belongs to some algebra in \( \mathcal S \).

Fix \( f\in\ell^\infty(G) \), and choose finite-range functions \( f_j \) such that \( \|f-f_j\|_\infty\to0 \). Let \( \mathcal P \) consist of the characteristic functions occurring in finite-range decompositions of the functions \( f_j \), together with all of their \( H \)-translates. Since \( H \) is countable, \( \mathcal P \) is a countable family of projections. Thus \( A=C^*(\mathcal P\cup\{1\}) \) belongs to \( \mathcal S \). Each \( f_j \) belongs to \( A \), so \( f\in A \). This proves Equation~\eqref{eq:coefficient-stage-colimit}.

We next prove Equation~\eqref{eq:integer-stage-colimit}. For each \( A\in\mathcal S \), the Gelfand transform identifies \( C(\widehat{A},\mathbb Z) \) with \( A\cap\ell^\infty(G,\mathbb Z) \). Under these identifications, the connecting maps are inclusions. Therefore the canonical homomorphism
\[
\varinjlim_{A\in\mathcal S}C(\widehat{A},\mathbb Z)
\longrightarrow
\ell^\infty(G,\mathbb Z)
\]
is injective. To prove surjectivity, it therefore suffices to show that every bounded integer-valued function belongs to some algebra in \( \mathcal S \).

Fix \( f\in\ell^\infty(G,\mathbb Z) \). Since \( f \) has finite range, write \( f=\sum_{i=1}^r n_i1_{f^{-1}(n_i)} \). Let \( \mathcal P \) consist of the projections \( 1_{f^{-1}(n_i)} \) and all of their \( H \)-translates. Then \( A=C^*(\mathcal P\cup\{1\}) \) belongs to \( \mathcal S \) and contains \( f \). Under the Gelfand transform, \( f \) therefore defines an element of \( C(\widehat A,\mathbb Z) \). This proves Equation~\eqref{eq:integer-stage-colimit}.

It remains to identify the direct limit of the groups \( K_0(A) \). The isomorphisms in Equation~\eqref{eq:stage-coefficient-k-theory} are natural with respect to the connecting maps, so they induce an isomorphism on direct limits. Combining this induced isomorphism with Equation~\eqref{eq:integer-stage-colimit} yields
\[
\varinjlim_{A\in\mathcal S}K_0(A)
\cong
\ell^\infty(G,\mathbb Z).
\]
\end{proof}

\subsection{Transformation groupoids}

For each \( A\in\mathcal S \), we apply Proposition~\ref{prop:spectral-sequence} to the transformation groupoid \( \widehat A\rtimes H \). This choice gives the relevant crossed product because \( C_r^*(\widehat A\rtimes H) \) is canonically isomorphic to \( C(\widehat A)\rtimes_rH \).

We write the right action of \( H \) on \( \widehat A \) as \( x\cdot h \), so that \( (f\cdot h)(x)=f(x\cdot(-h)) \) for every \( f\in C(\widehat A) \). The unit space of \( \widehat A\rtimes H \) is \( \widehat A \). An element \( (x,h) \) is an arrow from \( x\cdot h \) to \( x \). Thus \( s(x,h)=x\cdot h \) and \( r(x,h)=x \). Since \( s(x,h)=r(x\cdot h,k) \), the arrows \( (x,h) \) and \( (x\cdot h,k) \) are composable, and their product is \( (x,h+k) \). The inverse of \( (x,h) \) is \( (x\cdot h,-h) \).

We first record the crossed-product identification with the conventions above.

\begin{lemma}
\label{lem:transformation-groupoid-crossed-product}
For every \( A\in\mathcal S \), there is a canonical isomorphism
\[
C_r^*(\widehat A\rtimes H)
\cong 
C(\widehat A)\rtimes_rH.
\]
Under this isomorphism, \( 1_{\widehat A\times\{-h\}}f1_{\widehat A\times\{-h\}}^*=f\cdot h \) for every \( f\in C(\widehat A) \) and \( h\in H \).
\end{lemma}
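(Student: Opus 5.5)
The plan is to write down the standard isomorphism between the reduced groupoid algebra of a transformation groupoid and the reduced crossed product, and then check that it intertwines the given conventions. The groupoid \( \widehat A\rtimes H \) is \( \widehat A\times H \) with the product topology, so it is ample. Its compact open bisections include the sets \( \widehat A\times\{h\} \), and \( C_c(\widehat A\rtimes H) \) is spanned by products \( f\,1_{\widehat A\times\{h\}} \) with \( f\in C(\widehat A) \) and \( h\in H \). I will first compute the convolution and involution on these elements:
\[
(\xi*\eta)(x,k)=\sum_{h\in H}\xi(x,h)\,\eta(x\cdot h,k-h),
\qquad
\xi^*(x,h)=\overline{\xi(x\cdot h,-h)}.
\]
These formulas follow from \( r(x,h)=x \), \( s(x,h)=x\cdot h \), the product \( (x,h)(x\cdot h,k)=(x,h+k) \), and the inverse \( (x\cdot h,-h) \). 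A direct calculation then gives that \( u_h:=1_{\widehat A\times\{-h\}} \) is unitary and that
\[
u_h f u_h^*(x,0)=f(x\cdot(-h))=(f\cdot h)(x).
\]
This is exactly the relation asserted in the statement. The sign \( -h \) is forced by the convention \( (f\cdot h)(x)=f(x\cdot(-h)) \).

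Next, the pair \( (C(\widehat A)\hookrightarrow C_c,\ h\mapsto u_h) \) is a covariant pair. By the universal property it defines a \( ^* \)-homomorphism \( \Phi \) from the algebraic crossed product \( C(\widehat A)\rtimes_{\mathrm{alg}}H \) into \( C_c(\widehat A\rtimes H) \), given by \( \sum_h f_h\lambda_h\mapsto\sum_h f_h u_h \). This map is a linear bijection onto \( C_c(\widehat A\rtimes H) \). Surjectivity holds because every compactly supported function is supported on finitely many clopen slices \( \widehat A\times\{h\} \). Injectivity holds because the slices are disjoint.

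The remaining step, and the only nontrivial one, is to show that \( \Phi \) is isometric for the reduced norms. I will handle it in the same way as in Proposition~\ref{prop:uniform-roe-crossed-product}, by comparing conditional expectations. On the groupoid side, restriction to the unit space \( \widehat A\times\{0\} \) gives a faithful conditional expectation \( C_r^*(\widehat A\rtimes H)\to C(\widehat A) \). On the crossed-product side, taking the coefficient at \( 0\in H \) gives the canonical faithful conditional expectation \( E \). Under \( \Phi \) these two expectations agree on the dense algebraic parts. The reduced norm on each side can be computed as \( \|a\|^2=\sup_\varphi\|\pi_\varphi(a)\| \), using the GNS representations induced from states \( \varphi\circ E \) with \( \varphi \) a state on \( C(\widehat A) \). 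Equivalently, the reduced norm is the norm on the Hilbert \( C(\widehat A) \)-module completion with inner product \( E(b^*a) \). Since \( \Phi \) preserves these inner products, the reduced norms coincide.

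Therefore \( \Phi \) extends to an isometric \( ^* \)-isomorphism \( C(\widehat A)\rtimes_rH\cong C_r^*(\widehat A\rtimes H) \). Its inverse is the isomorphism in the statement. It is canonical because it is determined by the identity on \( C(\widehat A) \) and by \( \lambda_h\mapsto 1_{\widehat A\times\{-h\}} \).

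I expect the only real care to be needed in matching the sign conventions in the convolution formula. The norm comparison is standard: the regular representation of the groupoid at the unit \( x \) is the induced representation \( \mathrm{Ind}\,\mathrm{ev}_x \) of the crossed product.
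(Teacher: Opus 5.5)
Your proposal is correct and follows the paper's proof. The paper likewise treats \( C_r^*(\widehat A\rtimes H)\cong C(\widehat A)\rtimes_rH \) as standard and checks the covariance relation with the convolution formulas: it computes \( 1_{\widehat A\times\{h\}}f1_{\widehat A\times\{h\}}^*=f\cdot(-h) \), noting that this vanishes off the unit space, and then replaces \( h \) by \( -h \). Your conditional-expectation argument for the reduced norms only fills in what the paper cites as standard; note the slip \( \|a\|^2 \) for \( \|a\| \) in your norm formula.
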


\begin{proof}
The canonical isomorphism between the reduced \( C^* \)-algebra of a transformation groupoid and the corresponding reduced crossed product is standard. It remains to verify the covariance relation for the source and range maps above.

For \( h\in H \), set \( u_h=1_{\widehat A\times\{h\}} \), and regard \( f\in C(\widehat A) \) as supported on the unit space. The convolution formulas and the identity \( u_h^*=u_{-h} \) give \( (u_hfu_h^*)(x,0)=f(x\cdot h) \) and \( u_hfu_h^*=0 \) off the unit space. Hence \( u_hfu_h^*=f\cdot(-h) \), and replacing \( h \) by \( -h \) gives \( u_{-h}fu_{-h}^*=f\cdot h \).
\end{proof}

We next verify the hypotheses of Proposition~\ref{prop:spectral-sequence}. The arrow space \( \widehat A\times H \) is second-countable, locally compact, and Hausdorff. Since compact open subsets of \( \widehat A \) form a basis, the sets \( U\times\{h\} \), with \( U\subseteq\widehat A \) compact and open, form a basis of compact open bisections. Hence \( \widehat A\rtimes H \) is ample.

The isotropy group at \( x\in\widehat A \) is the stabilizer \( \{h\in H:x\cdot h=x\} \), which is torsion-free because it is a subgroup of \( H\cong\mathbb Z^n \). The groupoid is amenable because \( H \) is amenable, so it satisfies the strong Baum--Connes conjecture with coefficients by \cite[Corollary~3.15 and the subsequent paragraph]{bonicke2024categorical}. Proposition~\ref{prop:spectral-sequence} therefore applies.

\begin{proposition}
\label{prop:crossed-product-stage-colimit}
There is a canonical isomorphism
\begin{equation}
\label{eq:crossed-product-stage-colimit}
\varinjlim_{A\in\mathcal S}C_r^*(\widehat A\rtimes H)
\cong
C_u^*(G,\mathcal E).
\end{equation}
\end{proposition}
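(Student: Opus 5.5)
The plan is to pass through reduced crossed products at each stage and then use Proposition~\ref{prop:uniform-roe-crossed-product}. By Lemma~\ref{lem:transformation-groupoid-crossed-product}, for each \( A\in\mathcal S \) we have \( C_r^*(\widehat A\rtimes H)\cong C(\widehat A)\rtimes_rH \). Gelfand duality identifies \( C(\widehat A) \) with \( A \), and the identification is \( H \)-equivariant. Hence \( C_r^*(\widehat A\rtimes H)\cong A\rtimes_rH \).

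For \( A\subseteq B \) in \( \mathcal S \), the inclusion \( A\hookrightarrow B \) is \( H \)-equivariant. It therefore induces a \( ^* \)-homomorphism \( A\rtimes_rH\to B\rtimes_rH \), and these maps form a directed system indexed by \( \mathcal S \), which is directed by Lemma~\ref{lem:stage-system-directed}. I would check that, under the isomorphisms above, these maps agree with the maps on groupoid \( C^* \)-algebras induced by the equivariant surjections \( \widehat B\to\widehat A \). This holds because both send \( f u_h \) to \( f u_h \), with \( f \) viewed in the larger algebra.

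Next, I would show that each connecting map, and each map \( \iota_A\colon A\rtimes_rH\to\ell^\infty(G)\rtimes_rH \), is injective. This is the one step that needs care, since a crossed product of an inclusion need not obviously be isometric. For reduced crossed products by a discrete group, injectivity of an equivariant inclusion is standard. One argument uses the canonical faithful conditional expectations: we have \( E_{\ell^\infty}\circ\iota_A=E_A \), so if \( \iota_A(a)=0 \), then \( E_A(a^*a)=0 \), and faithfulness of \( E_A \) gives \( a=0 \). This is the same argument used in the proof of Proposition~\ref{prop:uniform-roe-crossed-product}. Alternatively, one can note that the regular representation of \( A\rtimes_rH \) on \( \ell^2(H,\ell^2(G)) \) is the restriction of that of \( \ell^\infty(G)\rtimes_rH \). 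Either way, every map in the system is an isometric inclusion. The colimit is then the norm closure of the union of the images \( \iota_A(A\rtimes_rH) \) inside \( \ell^\infty(G)\rtimes_rH \), and the universal map from the colimit is injective.

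It remains to show density of the union. The union contains the elements \( f\lambda_h \) for \( f\in\bigcup_{A}A \) and \( h\in H \). By Lemma~\ref{lem:coefficient-colimits}, specifically Equation~\eqref{eq:coefficient-stage-colimit}, every \( f\in\ell^\infty(G) \) lies in some \( A\in\mathcal S \). Hence the union contains the whole algebraic crossed product \( C_c(H,\ell^\infty(G)) \), which is dense. The canonical map from the colimit is therefore an isomorphism onto \( \ell^\infty(G)\rtimes_rH \). Composing with Proposition~\ref{prop:uniform-roe-crossed-product} gives Equation~\eqref{eq:crossed-product-stage-colimit}.

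Canonicity holds because every map in the construction is induced by inclusions and by the fixed covariant representation \( (M,\lambda) \). The main obstacle is keeping the sign conventions of Lemma~\ref{lem:transformation-groupoid-crossed-product} consistent with the covariance relation \( \lambda_hM_f\lambda_h^*=M_{f\cdot h} \). This is handled by sending \( u_{-h}\mapsto\lambda_h \) at every stage.
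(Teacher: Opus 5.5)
Your proposal is correct and follows the paper's proof. Like the paper, you identify each stage with $C(\widehat A)\rtimes_r H$, pass the directed colimit inside the reduced crossed product using Lemma~\ref{lem:coefficient-colimits}, and finish with Proposition~\ref{prop:uniform-roe-crossed-product}. The only difference is that you prove that reduced crossed products commute with injective equivariant inductive limits, using faithful conditional expectations and density of the algebraic crossed product, whereas the paper simply cites this as standard.
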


\begin{proof}
By Lemma~\ref{lem:transformation-groupoid-crossed-product}, we may identify \( C_r^*(\widehat A\rtimes H) \) with \( C(\widehat A)\rtimes_rH \) for every \( A\in\mathcal S \). For \( A\subseteq B \), the inclusion \( A\to B \), equivalently the pullback map \( C(\widehat A)\to C(\widehat B) \), is injective and \( H \)-equivariant. Since reduced crossed products commute with injective equivariant inductive limits,
\[
\varinjlim_{A\in\mathcal S}C_r^*(\widehat A\rtimes H)
\cong
\varinjlim_{A\in\mathcal S}\bigl(C(\widehat A)\rtimes_rH\bigr)
\cong
\left(\varinjlim_{A\in\mathcal S}C(\widehat A)\right)\rtimes_rH.
\]
Lemma~\ref{lem:coefficient-colimits} identifies \( \varinjlim_{A\in\mathcal S}C(\widehat A) \) with \( \ell^\infty(G) \), and Proposition~\ref{prop:uniform-roe-crossed-product} identifies \( \ell^\infty(G)\rtimes_rH \) with \( C_u^*(G,\mathcal E) \). Combining these identifications gives Equation~\eqref{eq:crossed-product-stage-colimit}.
\end{proof}

\subsection{Groupoid homology}

We identify the groupoid homology of \( \widehat A\rtimes H \) with the group homology of \( H \) with coefficients in the right \( \mathbb Z H \)-module \( C(\widehat A,\mathbb Z) \) by comparing the corresponding chain complexes.

For \( A\in\mathcal S \) and \( p\geq1 \), every composable \( p \)-tuple in \( \widehat A\rtimes H \) has the form
\[
\bigl(
(x,h_1),
(x\cdot h_1,h_2),
\ldots,
(x\cdot(h_1+\cdots+h_{p-1}),h_p)
\bigr),
\]
so we identify \( (\widehat A\rtimes H)^{(p)} \) with \( \widehat A\times H^p \). Under this identification, the face maps are
\[
d_0(x,h_1,\ldots,h_p)
=
(x\cdot h_1,h_2,\ldots,h_p),
\]
\[
d_i(x,h_1,\ldots,h_p)
=
(x,h_1,\ldots,h_i+h_{i+1},\ldots,h_p)
\qquad
(0<i<p),
\]
and
\[
d_p(x,h_1,\ldots,h_p)
=
(x,h_1,\ldots,h_{p-1}).
\]

Since \( \widehat A \) is compact and \( H \) is discrete, we identify \( C_c((\widehat A\rtimes H)^{(p)},\mathbb Z) \) with \( C(\widehat A,\mathbb Z)\otimes_{\mathbb Z}\mathbb Z[H^p] \). For \( f\in C(\widehat A,\mathbb Z) \) and \( h_1,\ldots,h_p\in H \), let \( f[h_1|\cdots|h_p] \) denote the function supported on the \( H^p \)-slice \( \{(h_1,\ldots,h_p)\} \) with coefficient function \( f \). Taking pushforwards along the face maps gives
\[
(d_0)_*\bigl(f[h_1|\cdots|h_p]\bigr)
=
(f\cdot h_1)[h_2|\cdots|h_p],
\]
\[
(d_i)_*\bigl(f[h_1|\cdots|h_p]\bigr)
=
f[h_1|\cdots|h_i+h_{i+1}|\cdots|h_p]
\qquad
(0<i<p),
\]
and
\[
(d_p)_*\bigl(f[h_1|\cdots|h_p]\bigr)
=
f[h_1|\cdots|h_{p-1}].
\]
Taking the alternating sum of these maps gives the bar differential for the right \( \mathbb Z H \)-module \( C(\widehat A,\mathbb Z) \).

\begin{lemma}
\label{lem:stage-groupoid-group-homology}
For every \( A\in\mathcal S \) and every \( p\geq0 \), there is a natural isomorphism 
\[
H_p(\widehat A\rtimes H;\mathbb Z)
\cong 
H_p(H;C(\widehat A,\mathbb Z)).
\]
If \( A\subseteq B \) and \( \pi_{BA}\colon\widehat B\to\widehat A \) denotes the canonical \( H \)-equivariant map, then in degree \( p \) pullback induces the chain map \( f[h_1|\cdots|h_p]\mapsto(\pi_{BA}^*f)[h_1|\cdots|h_p] \).
\end{lemma}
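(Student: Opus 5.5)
The proof compares chain complexes directly. Groupoid homology $H_*(\mathcal G;\mathbb Z)$ of an ample groupoid is, by definition (as in \cite{proietti2022homology}), the homology of the Moore complex $C_c(\mathcal G^{(p)},\mathbb Z)$, with differential $\sum_{i=0}^p(-1)^i(d_i)_*$. Group homology $H_*(H;M)$ for a right $\mathbb ZH$-module $M$ is the homology of the bar complex $M\otimes_{\mathbb Z}\mathbb Z[H^p]$, which computes $\operatorname{Tor}^{\mathbb ZH}_*(M,\mathbb Z)$ via the standard bar resolution. The plan is therefore to show that, under the identification $C_c((\widehat A\rtimes H)^{(p)},\mathbb Z)\cong C(\widehat A,\mathbb Z)\otimes_{\mathbb Z}\mathbb Z[H^p]$ described just before the statement, the two complexes coincide degreewise, with equal differentials.

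\textbf{Step 1: the identification is a group isomorphism in each degree.} Since $H^p$ is discrete and $\widehat A$ is compact, a compactly supported locally constant $\mathbb Z$-valued function on $\widehat A\times H^p$ is supported on finitely many slices $\widehat A\times\{(h_1,\dots,h_p)\}$. On each slice it is an element of $C(\widehat A,\mathbb Z)$. This gives the isomorphism
\[
C_c(\widehat A\times H^p,\mathbb Z)\cong\bigoplus_{H^p}C(\widehat A,\mathbb Z)\cong C(\widehat A,\mathbb Z)\otimes_{\mathbb Z}\mathbb Z[H^p].
\]
In degree $0$ both sides are simply $C(\widehat A,\mathbb Z)$.

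\textbf{Step 2: the differentials agree.} The only step needing care is the pushforward along $d_0$. For a local homeomorphism $\varphi$, the pushforward is $(\varphi_*F)(y)=\sum_{\varphi(z)=y}F(z)$. The map $d_0$ is $(x,h_1,\dots)\mapsto(x\cdot h_1,h_2,\dots)$. For a function supported on the slice at $h_1$ with coefficient $f$, the value at $(y,h_2,\dots)$ is $f(y\cdot(-h_1))$, which is $(f\cdot h_1)(y)$ under the convention $(f\cdot h)(x)=f(x\cdot(-h))$. This matches the first term of the bar differential $m[h_1|\cdots|h_p]\mapsto (m\cdot h_1)[h_2|\cdots]+\cdots$. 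The inner faces $d_i$ and the last face $d_p$ only add or drop group coordinates, and they are bijective on slices as sets, so their pushforwards are the stated formulas. The alternating sums are therefore literally equal, the complexes are isomorphic, and so is their homology.

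I expect the main obstacle to be convention bookkeeping rather than substance. One must confirm that the bar complex convention used for right modules is exactly $\partial(m[h_1|\cdots|h_p])=m h_1[h_2|\cdots]+\sum_{0<i<p}(-1)^i m[\cdots|h_ih_{i+1}|\cdots]+(-1)^p m[h_1|\cdots|h_{p-1}]$. One must also confirm that the groupoid-homology convention (face maps and sign order) from \cite{proietti2022homology} matches the $d_i$ listed before the statement. If a different but equivalent convention appears on either side, I would compose with the standard isomorphism of simplicial objects reversing the order of the tuple, which induces an isomorphism on homology.

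\textbf{Step 3: naturality.} For $A\subseteq B$, the map $\pi=\pi_{BA}\colon\widehat B\to\widehat A$ is continuous and $H$-equivariant. It therefore induces a groupoid homomorphism $\widehat B\rtimes H\to\widehat A\rtimes H$ that is bijective on $H$-coordinates. The induced map on chains is pullback $F\mapsto F\circ(\pi\times\mathrm{id}_{H^p})$, which is well defined on compactly supported functions because $\widehat B$ is compact. On slices this is $f[h_1|\cdots|h_p]\mapsto(\pi^*f)[h_1|\cdots|h_p]$. Equivariance gives $\pi^*(f\cdot h)=(\pi^*f)\cdot h$, so this pullback commutes with the $d_0$ term. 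The other faces commute trivially. Hence the pullback is a chain map, and it corresponds under the isomorphism of Step 2 to the map of bar complexes induced by the $\mathbb ZH$-module map $\pi^*\colon C(\widehat A,\mathbb Z)\to C(\widehat B,\mathbb Z)$. This gives naturality of the isomorphism.
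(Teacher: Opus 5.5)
Your proposal is correct and follows the paper's proof. You identify $C_c(\widehat A\times H^p,\mathbb Z)$ slice by slice with the inhomogeneous bar complex $C(\widehat A,\mathbb Z)\otimes_{\mathbb Z}\mathbb Z[H^p]$, check that the pushforwards along the face maps give the bar differential (with $d_0$ the only nontrivial term, handled via $(f\cdot h)(x)=f(x\cdot(-h))$), and obtain naturality from $\pi_{BA}^*(f\cdot h)=(\pi_{BA}^*f)\cdot h$; your explicit checks simply spell out the computation the paper places just before the lemma.
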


\begin{proof}
In degree zero, both chain complexes are \( C(\widehat A,\mathbb Z) \). In positive degrees, the preceding computation identifies the groupoid chain complex of \( \widehat A\rtimes H \) with the inhomogeneous bar complex for the right \( \mathbb Z H \)-module \( C(\widehat A,\mathbb Z) \). These identifications form a natural chain isomorphism, thereby inducing the asserted isomorphism in homology. If \( A\subseteq B \), then \( \pi_{BA} \) is \( H \)-equivariant, so \( \pi_{BA}^*(f\cdot h)=(\pi_{BA}^*f)\cdot h \) for every \( f\in C(\widehat A,\mathbb Z) \) and \( h\in H \). Hence pullback commutes with the bar differential.
\end{proof}

\subsection[\texorpdfstring{Group homology of \( \ell^\infty(G,\mathbb Z) \)}{Group homology of ell-infinity(G,Z)}]{Group homology of \( \ell^\infty(G,\mathbb Z) \)}
\label{sec:group-homology-of-ell-infinity(G,Z)}

To determine the direct limits of the groupoid homology groups from the preceding subsection, we compute \( H_p(H;\ell^\infty(G,\mathbb Z)) \) for every \( p\geq0 \). We first compute the top-degree homology from the Koszul complex. We then identify the Koszul chain complex with the chain complex obtained from the Koszul cochain complex by placing cohomological degree \( n-p \) in chain degree \( p \), and apply Shapiro's lemma to \( \operatorname{Map}(G,A) \). Finally, we compare integral and real coefficients, determine the dimensions of the real homology groups in degrees \( p<n \), and deduce the integral groups.

Fix an ordered basis \( e_1,\ldots,e_n \) of \( H\cong\mathbb Z^n \), and let \( e_1^*,\ldots,e_n^* \) be the dual basis of \( H^*=\operatorname{Hom}_{\mathbb Z}(H,\mathbb Z) \). For \( 1\leq i\leq n \) and \( p\geq1 \), let \( \iota_i\colon\Lambda^pH\to\Lambda^{p-1}H \) denote contraction with \( e_i^* \). We use the standard free left \( \mathbb ZH \)-resolution \( K_*\to\mathbb Z \), where \( K_p=\mathbb ZH\otimes_{\mathbb Z}\Lambda^pH \) and \( d(1\otimes\alpha)=\sum_{i=1}^n(1-e_i)\otimes\iota_i\alpha \). For a right \( \mathbb ZH \)-module \( L \), tensoring this resolution with \( L \) over \( \mathbb ZH \) gives the chain group \( L\otimes_{\mathbb Z}\Lambda^pH \) and the differential
\begin{equation}
\label{eq:translation-koszul-differential}
\partial(l\otimes\alpha)
=
\sum_{i=1}^n
(l-l\cdot e_i)\otimes\iota_i\alpha.
\end{equation}
The convention with \( l\cdot e_i-l \) multiplies the differential by \( -1 \), and multiplication by \( (-1)^p \) in degree \( p \) gives a chain isomorphism between the two conventions.

\begin{lemma}
\label{lem:top-koszul-homology}
For every right \( \mathbb ZH \)-module \( L \), there is a natural isomorphism \( H_n(H;L)\cong L^H\otimes_{\mathbb Z}\Lambda^nH \). In particular, \( H_n(H;\ell^\infty(G,\mathbb Z))\cong \ell^\infty(G/H,\mathbb Z)\otimes_{\mathbb Z}\Lambda^nH \).
\end{lemma}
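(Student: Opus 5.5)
The plan is to compute $H_n(H;L)$ directly from the Koszul complex of Equation~\eqref{eq:translation-koszul-differential}. Since $K_*$ is a free resolution of length $n$ ($\Lambda^pH=0$ for $p>n$), the complex $L\otimes_{\mathbb Z}\Lambda^*H$ vanishes in degree $n+1$. Hence $H_n(H;L)$ is just the kernel of
\[
\partial\colon L\otimes_{\mathbb Z}\Lambda^nH\to L\otimes_{\mathbb Z}\Lambda^{n-1}H .
\]

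To compute this kernel, I will write a general element of the top group as $l\otimes(e_1\wedge\cdots\wedge e_n)$. This is possible because $\Lambda^nH$ is free of rank one with generator $\omega=e_1\wedge\cdots\wedge e_n$. The contractions give
\[
\iota_i\omega=(-1)^{i-1}e_1\wedge\cdots\wedge\widehat{e_i}\wedge\cdots\wedge e_n,
\]
and these elements form a basis of the free abelian group $\Lambda^{n-1}H$. Therefore
\[
\partial(l\otimes\omega)=\sum_i (l-l\cdot e_i)\otimes\iota_i\omega
\]
vanishes if and only if $l\cdot e_i=l$ for every $i$. Since the $e_i$ generate $H$, this says exactly that $l\in L^H$. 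The kernel is thus $L^H\otimes\omega$, which is $L^H\otimes_{\mathbb Z}\Lambda^nH$; the tensor product is exact here because $\Lambda^nH\cong\mathbb Z$ is free.

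For naturality, I will note that the description $L^H\otimes\Lambda^nH\subseteq L\otimes\Lambda^nH$ does not depend on the chosen basis. A change of basis only rescales $\omega$ by $\pm1$, and the subgroup is defined intrinsically as a kernel. Homomorphisms $L\to L'$ of modules preserve invariants and commute with $\partial$, which gives naturality in $L$. Independence from the chosen resolution is standard, since any two free resolutions are chain homotopy equivalent.

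For the particular case, I will identify $\ell^\infty(G,\mathbb Z)^H$ using the action $(f\cdot h)(g)=f(g-h)$. A function is $H$-invariant exactly when it is constant on $H$-cosets, so such functions correspond bijectively to functions on $G/H$. Boundedness is preserved in both directions, so the correspondence gives $\ell^\infty(G,\mathbb Z)^H\cong\ell^\infty(G/H,\mathbb Z)$ via pullback along $G\to G/H$. The only point requiring care is the sign bookkeeping in $\iota_i\omega$ and the remark that $\Lambda^{n+1}H=0$; I expect no real obstacle beyond this.
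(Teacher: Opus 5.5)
Your computation is correct and matches the core of the paper's proof. There is no chain group in degree $n+1$, so $H_n(H;L)$ is the kernel of $\partial$ on $L\otimes_{\mathbb Z}\Lambda^nH$. The elements $\iota_i\omega$ are $\pm$ a basis of $\Lambda^{n-1}H$, so $l\otimes\omega$ is a cycle exactly when $l\in L^H$. For a fixed Koszul resolution the identification is natural in $L$, and the specialization to $\ell^\infty(G,\mathbb Z)$ is exactly as in the paper.

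The step that does not work as written is your argument that the isomorphism is independent of the basis. The subgroups $L^H\otimes\Lambda^nH\subseteq L\otimes_{\mathbb Z}\Lambda^nH$ coming from two bases do coincide, but that is not the issue. The Koszul differential $\sum_i(1-e_i)\otimes\iota_i$ itself depends on the basis, so two bases give two different resolutions. What you must compare is the two identifications of $H_n(H;L)$ with that subgroup. In top degree the comparison map between the resolutions is $\mathbb ZH$-linear on a free module of rank one, hence multiplication by some $u\in\mathbb ZH$. On $L^H\otimes\Lambda^nH$ it therefore acts by the augmentation $\varepsilon(u)\in\{\pm1\}$. The fact that any two free resolutions are chain homotopy equivalent only tells you the two computations are related by this comparison map; it does not give $\varepsilon(u)=1$. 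Taking $L=\mathbb Z$ shows that $\varepsilon(u)=1$ is equivalent to basis-independence of the Koszul identification $H_n(H;\mathbb Z)\cong\Lambda^nH$, and that needs a real argument. Your remark that $\omega$ changes only by $\pm1$ does not supply it.

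The paper avoids this by never identifying $H_n(H;L)$ through a basis-dependent formula. It defines $\kappa_L\colon L^H\otimes_{\mathbb Z}H_n(H;\mathbb Z)\to H_n(H;L)$ by $\kappa_L(l\otimes x)=H_n(H;j_l)(x)$, where $j_l\colon\mathbb Z\to L$ sends $1$ to $l$. It defines $\tau_H\colon\Lambda^nH\to H_n(H;\mathbb Z)$ by $h_1\wedge\cdots\wedge h_n\mapsto[h_1]\cdots[h_n]$ using the Pontryagin product. Both maps are basis-free. Your Koszul computation is used only to check $\kappa_L(l\otimes[1\otimes\omega])=[l\otimes\omega]$, which makes $\kappa_L$ bijective, and K\"unneth shows $\tau_H$ is bijective. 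If you only need naturality in $L$ for a fixed ordered basis, your proof is complete. However, Theorem~\ref{thm:finite-rank-k-theory} later uses the basis-free form, when it asserts that the degree-$n$ graded piece is canonically $\ell^\infty(G/H,\mathbb Z)\otimes_{\mathbb Z}\Lambda^nH$. You should therefore replace the rescaling remark with a construction of this kind.
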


\begin{proof}
When \( n=0 \), we have \( H=\{0\} \), so \( H_0(H;L)=L=L^H\otimes_{\mathbb Z}\Lambda^0H \). Assume that \( n\geq1 \). For \( l\in L^H \), let \( j_l\colon\mathbb Z\to L \) be given by \( j_l(1)=l \), where \( \mathbb Z \) carries the trivial right \( H \)-action. Since \( l\in L^H \), the map \( j_l \) is a right \( \mathbb ZH \)-module homomorphism. Functoriality of group homology defines \( \kappa_L\colon L^H\otimes_{\mathbb Z}H_n(H;\mathbb Z)\to H_n(H;L) \) by \( \kappa_L(l\otimes x)=H_n(H;j_l)(x) \). Additivity of \( l\mapsto j_l \) and of group homology in coefficient homomorphisms makes this pairing bilinear, so it factors through the tensor product. This construction does not depend on the chosen basis of \( H \).

Set \( \omega=e_1\wedge\cdots\wedge e_n \). With trivial coefficients, Equation~\eqref{eq:translation-koszul-differential} has zero differential, so the chosen Koszul resolution identifies \( H_n(H;\mathbb Z) \) with a free abelian group generated by \( [1\otimes\omega] \). For coefficients in \( L \), Equation~\eqref{eq:translation-koszul-differential} shows that \( l\otimes\omega \) is a cycle exactly when \( l=l\cdot e_i \) for every \( 1\leq i\leq n \), equivalently when \( l\in L^H \). Since there is no chain group in degree \( n+1 \), every class in \( H_n(H;L) \) has a unique representative \( l\otimes\omega \) with \( l\in L^H \). The chain map induced by \( j_l \) sends \( 1\otimes\omega \) to \( l\otimes\omega \), so \( \kappa_L(l\otimes[1\otimes\omega])=[l\otimes\omega] \). Hence \( \kappa_L \) is an isomorphism.

We next identify \( H_n(H;\mathbb Z) \) canonically. The canonical isomorphism \( H\cong H_1(H;\mathbb Z) \), together with the Pontryagin product, defines \( \tau_H\colon\Lambda^nH\to H_n(H;\mathbb Z) \) by \( h_1\wedge\cdots\wedge h_n\mapsto [h_1]\cdots[h_n] \). Graded commutativity gives \( [h][k]=-[k][h] \). For \( h\neq0 \), naturality under the inclusion \( \langle h\rangle\cong\mathbb Z\to H \) gives \( [h]^2=0 \) because \( H_2(\mathbb Z;\mathbb Z)=0 \), so the product is alternating and \( \tau_H \) is well defined. Using the chosen decomposition \( H=\mathbb Ze_1\oplus\cdots\oplus\mathbb Ze_n \), the integral Kunneth theorem gives \( H_n(H;\mathbb Z)\cong\bigotimes_{i=1}^n H_1(\mathbb Ze_i;\mathbb Z)\cong\mathbb Z \), generated by \( [e_1]\cdots[e_n] \). Since \( \tau_H(e_1\wedge\cdots\wedge e_n)=[e_1]\cdots[e_n] \), the map \( \tau_H \) is an isomorphism. Since we defined \( \tau_H \) without choosing a basis, it is canonical.

We therefore obtain the canonical composite
\[
L^H\otimes_{\mathbb Z}\Lambda^nH
\xrightarrow{\operatorname{id}\otimes\tau_H}
L^H\otimes_{\mathbb Z}H_n(H;\mathbb Z)
\xrightarrow{\kappa_L}
H_n(H;L).
\]
Its inverse gives the claimed isomorphism. If \( u\colon L\to L' \) is a right \( \mathbb ZH \)-module homomorphism, then \( u\circ j_l=j_{u(l)} \), so functoriality of group homology shows that \( \kappa_L \) is natural in \( L \). Since \( \tau_H \) depends only on \( H \), the claimed isomorphism is natural in \( L \).

Finally, the \( H \)-invariant elements of \( \ell^\infty(G,\mathbb Z) \) are precisely the bounded integer-valued functions that are constant on the cosets of \( H \). Thus \( \ell^\infty(G,\mathbb Z)^H\cong\ell^\infty(G/H,\mathbb Z) \), which gives the stated specialization.
\end{proof}

The homology groups in degrees \( p<n \) depend on both kernels and images. We therefore regard the Koszul cochain complex as a chain complex by placing cohomological degree \( n-p \) in chain degree \( p \). Since \( H \) is abelian, the underlying abelian group of every right \( \mathbb ZH \)-module \( L \) becomes a left \( \mathbb ZH \)-module through the action \( h\cdot l=l\cdot h \).

\begin{lemma}
\label{lem:koszul-self-duality}
Let \( L \) be a right \( \mathbb ZH \)-module, and equip its underlying abelian group with the left action \( h\cdot l=l\cdot h \). The chosen ordered basis of \( H \) determines a natural isomorphism \( H_p(H;L)\cong H^{n-p}(H;L) \) for every \( 0\leq p\leq n \).
\end{lemma}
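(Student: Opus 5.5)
We will prove Lemma~\ref{lem:koszul-self-duality} by computing both sides with Koszul complexes and matching the complexes term by term using the Hodge-type isomorphism \( \Lambda^pH\cong\operatorname{Hom}_{\mathbb Z}(\Lambda^{n-p}H,\mathbb Z) \) determined by \( \omega=e_1\wedge\cdots\wedge e_n \).

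\emph{The homology side.} By Equation~\eqref{eq:translation-koszul-differential}, \( H_p(H;L) \) is the homology of \( L\otimes\Lambda^pH \) with differential \( \partial(l\otimes\alpha)=\sum_i(l-l\cdot e_i)\otimes\iota_i\alpha \).

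\emph{The cohomology side.} Apply \( \operatorname{Hom}_{\mathbb ZH}(-,L) \) to the same free resolution \( K_*\to\mathbb Z \), with \( L \) regarded as a left module via \( h\cdot l=l\cdot h \). Since \( \operatorname{Hom}_{\mathbb ZH}(\mathbb ZH\otimes\Lambda^qH,L)\cong\operatorname{Hom}_{\mathbb Z}(\Lambda^qH,L)\cong L\otimes\Lambda^qH^* \), the cochain groups become \( L\otimes\Lambda^qH^* \). Dualizing \( d(1\otimes\alpha)=\sum_i(1-e_i)\otimes\iota_i\alpha \) shows that the coboundary is
\[
\delta(l\otimes\phi)=\sum_{i=1}^n(l-e_i\cdot l)\otimes e_i^*\wedge\phi,
\]
because the transpose of contraction \( \iota_i \) is wedging with \( e_i^* \). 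We will check this sign and transpose carefully on basis elements \( e_I \), \( e_I^* \).

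\emph{Matching the complexes.} For a subset \( I\subseteq\{1,\ldots,n\} \) with complement \( I^c \), define \( \star\colon\Lambda^pH\to\Lambda^{n-p}H^* \) on basis vectors by \( \star(e_I)=\varepsilon(I)\,e^*_{I^c} \), where the sign \( \varepsilon(I) \) is chosen so that \( e_I\wedge e_{I^c}=\varepsilon(I)\,\omega \); equivalently, \( \star\alpha \) is contraction of \( \omega^* \) by \( \alpha \). The key combinatorial identity, checked on basis elements, is
\[
\star(\iota_i\alpha)=\pm\,e_i^*\wedge\star\alpha,
\]
with a sign depending only on \( p \) (and possibly on \( n \)), not on \( i \) or \( \alpha \). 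Granting this, \( \operatorname{id}_L\otimes\star \), corrected by a degree-dependent sign \( \pm1 \) in each degree, is a chain isomorphism from the homological Koszul complex in degree \( p \) to the cohomological one in degree \( n-p \). The left action was defined by \( e_i\cdot l=l\cdot e_i \), so the module terms \( l-l\cdot e_i \) agree on both sides. Passing to homology yields \( H_p(H;L)\cong H^{n-p}(H;L) \).

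\emph{Naturality.} The isomorphism is natural in \( L \), because every map used has the form \( \operatorname{id}_L \) tensored with a map that depends only on the ordered basis.

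The main obstacle is the sign bookkeeping in the identity \( \star\iota_i=\pm e_i^*\wedge\star \). We will handle it first by verifying the identity directly on \( e_I \), separately for \( i\in I \) and \( i\notin I \), where both sides vanish. We will then absorb the resulting global degree sign, such as \( (-1)^p \), into the chain isomorphism, as the paper already did when comparing differential conventions.
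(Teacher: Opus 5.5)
Your proposal is correct and follows the paper's proof: your $\star$ is the paper's map $\alpha\mapsto\langle\alpha,\cdot\rangle$ coming from $\alpha\wedge\beta=\langle\alpha,\beta\rangle\omega$. Your identity $\star\iota_i=\pm\, e_i^*\wedge\star$ is Equation~\eqref{eq:contraction-pairing-sign}, which the paper obtains in one line from $\iota_i(\alpha\wedge\beta)=0$ in $\Lambda^{n+1}H$ rather than by a basis check; the sign is $(-1)^{p+1}$, so the degree correction has to satisfy $s_{p-1}s_p=(-1)^{p+1}$ (the paper takes $s_p=(-1)^{p(p+3)/2}$), not simply $s_p=(-1)^p$, though your recursive absorption of a sign depending only on $p$ already handles this.
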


\begin{proof}
Applying \( \operatorname{Hom}_{\mathbb ZH}(-,L) \) to the Koszul resolution gives 
\[ 
C^q(H;L)
\cong
\operatorname{Hom}_{\mathbb Z}(\Lambda^qH,L). 
\]
Under this identification, the coboundary satisfies
\begin{equation}
\label{eq:translation-koszul-coboundary}
(\delta\varphi)(\beta)
=
\sum_{i=1}^n
\left(
\varphi(\iota_i\beta)
-
\varphi(\iota_i\beta)\cdot e_i
\right)
\end{equation}
for every \( \varphi\in\operatorname{Hom}_{\mathbb Z}(\Lambda^qH,L) \) and \( \beta\in\Lambda^{q+1}H \).

Set \( D_p(L)=\operatorname{Hom}_{\mathbb Z}(\Lambda^{n-p}H,L) \) and let \( d_p^D=\delta^{n-p}\colon D_p(L)\to D_{p-1}(L) \). Set \( \omega=e_1\wedge\cdots\wedge e_n \). For \( \alpha\in\Lambda^pH \) and \( \beta\in\Lambda^{n-p}H \), define \( \langle\alpha,\beta\rangle\in\mathbb Z \) by \( \alpha\wedge\beta=\langle\alpha,\beta\rangle\omega \). The wedges of complementary subsets of the chosen basis give dual bases, so this pairing is perfect.

Define \( \Psi_p\colon L\otimes_{\mathbb Z}\Lambda^pH\to D_p(L) \) by
\begin{equation}
\label{eq:koszul-duality-map}
\Psi_p(l\otimes\alpha)(\beta)
=
(-1)^{p(p+3)/2}
\langle\alpha,\beta\rangle l.
\end{equation}
The perfectness of the pairing implies that each \( \Psi_p \) is an isomorphism.

We verify that \( \Psi_* \) is a chain map. Fix \( \alpha\in\Lambda^pH \) and \( \beta\in\Lambda^{n-p+1}H \). Since \( \alpha\wedge\beta\in\Lambda^{n+1}H=0 \), the contraction identity gives \( 0 = \iota_i(\alpha\wedge\beta) = \iota_i\alpha\wedge\beta + (-1)^p\alpha\wedge\iota_i\beta \). Therefore
\begin{equation}
\label{eq:contraction-pairing-sign}
\langle\alpha,\iota_i\beta\rangle
=
(-1)^{p+1}
\langle\iota_i\alpha,\beta\rangle.
\end{equation}
Equations~\eqref{eq:translation-koszul-coboundary}, \eqref{eq:koszul-duality-map}, and \eqref{eq:contraction-pairing-sign} give
\[
\begin{aligned}
d_p^D\Psi_p(l\otimes\alpha)(\beta)
&=
(-1)^{p(p+3)/2}
\sum_{i=1}^n
\langle\alpha,\iota_i\beta\rangle
(l-l\cdot e_i)\\
&=
(-1)^{p(p+3)/2+p+1}
\sum_{i=1}^n
\langle\iota_i\alpha,\beta\rangle
(l-l\cdot e_i).
\end{aligned}
\]
The exponents in the final expression and in \( \Psi_{p-1}\partial(l\otimes\alpha)(\beta) \) have the same parity because
\[
\frac{p(p+3)}2+p+1
-
\frac{(p-1)(p+2)}2
=
2p+2.
\]
Hence \( d_p^D\Psi_p=\Psi_{p-1}\partial_p \). Thus \( \Psi_* \) is a chain isomorphism from the Koszul chain complex to \( D_*(L) \), and it induces \( H_p(H;L)\cong H^{n-p}(H;L) \).

For a right \( \mathbb ZH \)-module homomorphism \( f\colon L\to L' \), Equation~\eqref{eq:koszul-duality-map} gives \( f\circ\Psi_p=\Psi_p\circ(f\otimes\operatorname{id}_{\Lambda^pH}) \). Thus, for the fixed ordered basis, the induced isomorphism is natural in \( L \).
\end{proof}

We apply Lemma~\ref{lem:koszul-self-duality} to modules of all functions.

\begin{lemma}
\label{lem:function-coefficient-homology}
Let \( A \) be an abelian group, and equip \( \operatorname{Map}(G,A) \) with the right translation action \( (f\cdot h)(g)=f(g-h) \). Then
\[
H_p\bigl(H;\operatorname{Map}(G,A)\bigr)
\cong
\begin{cases}
\operatorname{Map}(G/H,A)\otimes_{\mathbb Z}\Lambda^nH,
& p=n,\\
0,
& p\neq n.
\end{cases}
\]
\end{lemma}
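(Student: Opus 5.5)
The plan is to convert homology into cohomology with Lemma~\ref{lem:koszul-self-duality}, compute that cohomology by Shapiro's lemma, and take the top degree from Lemma~\ref{lem:top-koszul-homology}. Set \( L=\operatorname{Map}(G,A) \) with the given right action, viewed as a left module through \( h\cdot f=f\cdot h \). Then Lemma~\ref{lem:koszul-self-duality} gives
\[
H_p(H;L)\cong H^{n-p}(H;L)\qquad(0\le p\le n).
\]
For \( p>n \) the Koszul chain complex vanishes, so \( H_p(H;L)=0 \) there. It therefore suffices to prove \( H^q(H;L)=0 \) for \( q\ge1 \). The case \( p=n \) is already Lemma~\ref{lem:top-koszul-homology}, since \( L^H \) consists of the functions constant on \( H \)-cosets, i.e. \( L^H\cong\operatorname{Map}(G/H,A) \).

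To get the vanishing, I would choose a set \( T\subseteq G \) of coset representatives for \( G/H \). The bijection \( T\times H\to G \), \( (t,h)\mapsto t+h \), gives an isomorphism of \( H \)-modules
\[
L\cong\prod_{t\in T}\operatorname{Map}(H,A),
\]
where \( H \) acts on each factor by translation. The sign convention \( f(g-h) \) does not matter here, because inversion on \( H \) intertwines the two translation actions. Next I would identify \( \operatorname{Map}(H,A) \) with the coinduced module \( \operatorname{Hom}_{\mathbb Z}(\mathbb ZH,A) \). Shapiro's lemma then gives \( H^q(H;\operatorname{Map}(H,A))\cong H^q(\{0\};A)=0 \) for \( q\ge1 \).

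Cohomology commutes with arbitrary products of modules. This holds because \( \operatorname{Hom}_{\mathbb ZH}(K_*,-) \) preserves products and products of abelian groups are exact. Hence \( H^q(H;L)\cong\prod_{t\in T} H^q(H;\operatorname{Map}(H,A))=0 \) for \( q\ge1 \).

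There is a more hands-on alternative that avoids Shapiro's lemma, and I would use it to double-check the signs. The Koszul cochain complex of \( \operatorname{Map}(\mathbb Z^n,A) \) is the tensor product of \( n \) copies of the two-term complex \( \operatorname{Map}(\mathbb Z,A)\xrightarrow{1-\text{shift}}\operatorname{Map}(\mathbb Z,A) \). That map is surjective, since any sequence can be solved for recursively, and its kernel is \( A \). A Künneth-type argument, or induction on \( n \), then gives acyclicity above degree \( 0 \). Because \( A \) is an arbitrary abelian group, I prefer the Shapiro route, which needs no flatness.

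The main obstacle is bookkeeping rather than substance. One has to check that the coinduced-module identification really matches the given action together with the left-module convention used in Lemma~\ref{lem:koszul-self-duality}, and that the decomposition over \( T \) is an \( H \)-module isomorphism. Neither affects the vanishing. Finally, I would note that the \( p=n \) identification is canonical even though \( T \) is not, because it comes directly from Lemma~\ref{lem:top-koszul-homology}.
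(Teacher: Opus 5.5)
Your proposal is correct and follows the paper's proof: you get $H^{q}(H;\operatorname{Map}(G,A))=0$ for $q\geq1$ from Shapiro's lemma, transport this to $H_p$ for $p<n$ with Lemma~\ref{lem:koszul-self-duality}, take the top degree from Lemma~\ref{lem:top-koszul-homology}, and get vanishing above degree $n$ from the length of the Koszul complex. The only difference is cosmetic: the paper uses a section $s\colon G/H\to G$ to identify $\operatorname{Map}(G,A)$ directly with $\operatorname{Coind}_{\{0\}}^H\operatorname{Map}(G/H,A)$, which is your product $\prod_{t\in T}\operatorname{Map}(H,A)$ regrouped, so one application of Shapiro's lemma suffices and you do not need cohomology to commute with products (also, your side check is only valid in its induction-on-$n$ form, since $\operatorname{Map}(\mathbb Z^n,A)$ is not literally a tensor product of copies of $\operatorname{Map}(\mathbb Z,A)$).
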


\begin{proof}
The associated left action is \( (h\cdot F)(g)=F(g-h) \). We identify this module with a module coinduced from the trivial subgroup.

Choose a section \( s\colon G/H\to G \) of the quotient map. Every \( g\in G \) has a unique expression \( g=s(c)+a \), where \( c\in G/H \) and \( a\in H \). Define
\[
\Phi\colon
\operatorname{Map}(G,A)
\longrightarrow
\operatorname{Map}\bigl(H,\operatorname{Map}(G/H,A)\bigr)
\]
by \( \Phi(F)(a)(c)=F(s(c)+a) \). Its inverse satisfies \( \Phi^{-1}(\Xi)(s(c)+a)=\Xi(a)(c) \). Equip the target with the left action \( (h\cdot\Xi)(a)=\Xi(a-h) \). Then
\[
\begin{aligned}
\Phi(h\cdot F)(a)(c)
&=
(h\cdot F)(s(c)+a)\\
&=
F(s(c)+a-h)\\
&=
\Phi(F)(a-h)(c)\\
&=
(h\cdot\Phi(F))(a)(c).
\end{aligned}
\]
Thus \( \Phi \) is an isomorphism of left \( \mathbb ZH \)-modules.

Brown defines \( \operatorname{Coind}_{\{0\}}^H M =\operatorname{Hom}_{\mathbb Z}(\mathbb ZH,M) \) with its standard left \( H \)-action \cite[Chapter~III, Section~5]{brown2012cohomology}. In additive notation, identifying \( \operatorname{Hom}_{\mathbb Z}(\mathbb ZH,M) \) with the group of functions from \( H \) to \( M \), the left action is \( (h\cdot f)(a)=f(a+h) \). Precomposition by inversion \( a\mapsto-a \) gives the equivalent action \( (h\cdot\Xi)(a)=\Xi(a-h) \). Hence
\[
\operatorname{Map}(G,A)
\cong
\operatorname{Coind}_{\{0\}}^H
\operatorname{Map}(G/H,A)
\]
as left \( \mathbb ZH \)-modules.

Shapiro's lemma gives
\[
H^k\left(
H;
\operatorname{Coind}_{\{0\}}^H
\operatorname{Map}(G/H,A)
\right)
\cong
H^k\left(
\{0\};
\operatorname{Map}(G/H,A)
\right)
\]
\cite[Proposition~III.6.2]{brown2012cohomology}. The cohomology group on the right vanishes for every \( k>0 \). If \( p<n \), then \( n-p>0 \), so Lemma~\ref{lem:koszul-self-duality} gives \( H_p(H;\operatorname{Map}(G,A))=0 \). The Koszul chain groups vanish above degree \( n \), so the homology also vanishes for \( p>n \).

Lemma~\ref{lem:top-koszul-homology} gives
\[
H_n\bigl(H;\operatorname{Map}(G,A)\bigr)
\cong
\operatorname{Map}(G,A)^H
\otimes_{\mathbb Z}
\Lambda^nH.
\]
The invariant functions are precisely the functions that are constant on the cosets of \( H \). Hence \( \operatorname{Map}(G,A)^H\cong\operatorname{Map}(G/H,A) \), which proves the degree-\( n \) isomorphism.
\end{proof}

We next compare integral and real coefficients.

\begin{lemma}
\label{lem:integral-real-homology-comparison}
For every \( p<n \), the coefficient inclusion \( \ell^\infty(G,\mathbb Z)\to\ell^\infty(G,\mathbb R) \) induces an isomorphism
\[
H_p\bigl(H;\ell^\infty(G,\mathbb Z)\bigr)
\longrightarrow
H_p\bigl(H;\ell^\infty(G,\mathbb R)\bigr).
\]
\end{lemma}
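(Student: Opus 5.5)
Since every bounded real function is the sum of its integer part and a function with values in \( [0,1) \), I would compare \( \ell^\infty(G,\mathbb Z) \) and \( \ell^\infty(G,\mathbb R) \) through a short exact sequence of right \( \mathbb ZH \)-modules. Concretely, write \( \ell^\infty(G,\mathbb R/\mathbb Z) \) for the module of all functions \( G\to\mathbb R/\mathbb Z \), which equals \( \operatorname{Map}(G,\mathbb R/\mathbb Z) \) because \( \mathbb R/\mathbb Z \) is compact. The translation action makes
\[
0\longrightarrow\ell^\infty(G,\mathbb Z)\longrightarrow\ell^\infty(G,\mathbb R)\longrightarrow\operatorname{Map}(G,\mathbb R/\mathbb Z)\longrightarrow0
\]
an exact sequence of right \( \mathbb ZH \)-modules. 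Surjectivity on the right holds because every function into \( \mathbb R/\mathbb Z \) lifts to a function with values in \( [0,1) \), which is bounded. Exactness in the middle holds because a bounded real function with integer values lies in \( \ell^\infty(G,\mathbb Z) \).

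The Koszul complex \( L\mapsto L\otimes_{\mathbb Z}\Lambda^*H \) is exact in \( L \), since each \( \Lambda^pH \) is free. The sequence above therefore yields a long exact sequence in group homology, and its relevant portion is
\[
H_{p+1}\bigl(H;\operatorname{Map}(G,\mathbb R/\mathbb Z)\bigr)\to H_p\bigl(H;\ell^\infty(G,\mathbb Z)\bigr)\to H_p\bigl(H;\ell^\infty(G,\mathbb R)\bigr)\to H_p\bigl(H;\operatorname{Map}(G,\mathbb R/\mathbb Z)\bigr).
\]
Lemma~\ref{lem:function-coefficient-homology} with \( A=\mathbb R/\mathbb Z \) shows that \( H_q(H;\operatorname{Map}(G,\mathbb R/\mathbb Z)) \) vanishes for every \( q\neq n \).

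For \( p<n \), the right-hand term vanishes because \( p\neq n \), so the map induced by the coefficient inclusion is surjective. For \( p\leq n-2 \), the left-hand term also vanishes because \( p+1\neq n \), so the map is an isomorphism outright.

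The main obstacle is the boundary case \( p=n-1 \). There the connecting map comes from \( H_n(H;\operatorname{Map}(G,\mathbb R/\mathbb Z))\cong\operatorname{Map}(G/H,\mathbb R/\mathbb Z)\otimes\Lambda^nH \), which is nonzero. I would resolve this by extending the sequence one step to the left:
\[
H_n\bigl(H;\ell^\infty(G,\mathbb R)\bigr)\to H_n\bigl(H;\operatorname{Map}(G,\mathbb R/\mathbb Z)\bigr)\xrightarrow{\ \partial\ }H_{n-1}\bigl(H;\ell^\infty(G,\mathbb Z)\bigr).
\]
By Lemma~\ref{lem:top-koszul-homology}, which is natural in the coefficients, the first map is \( \pi\otimes\operatorname{id} \), where \( \pi\colon\ell^\infty(G/H,\mathbb R)\to\operatorname{Map}(G/H,\mathbb R/\mathbb Z) \) is reduction modulo \( \mathbb Z \). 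This map is surjective, again by lifting values into \( [0,1) \). Exactness then gives \( \partial=0 \), so the map in degree \( n-1 \) is injective as well as surjective, and hence an isomorphism.

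The key points to verify carefully are that the quotient module really is all of \( \operatorname{Map}(G,\mathbb R/\mathbb Z) \), so that Lemma~\ref{lem:function-coefficient-homology} applies, and that the \( H \)-invariant bounded real functions surject onto the \( H \)-invariant \( \mathbb R/\mathbb Z \)-valued functions. Choosing bounded lifts on each coset settles the second point.
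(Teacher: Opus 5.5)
Your proposal is correct and follows the paper's proof essentially step for step. It uses the same short exact sequence \( 0\to\ell^\infty(G,\mathbb Z)\to\ell^\infty(G,\mathbb R)\to\operatorname{Map}(G,\mathbb R/\mathbb Z)\to0 \), kills the outer terms for \( p\leq n-2 \) with Lemma~\ref{lem:function-coefficient-homology}, and for \( p=n-1 \) shows the connecting map vanishes because reduction modulo \( \mathbb Z \) is surjective on \( H_n \), via the naturality in Lemma~\ref{lem:top-koszul-homology}. Your description of that \( H_n \) map as induced by the quotient map, rather than by the coefficient inclusion as the paper writes, is in fact the more precise phrasing.
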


\begin{proof}
The assertion is vacuous when \( n=0 \), so assume that \( n\geq1 \). Reduction modulo \( \mathbb Z \) defines a homomorphism \( \ell^\infty(G,\mathbb R)\to\operatorname{Map}(G,\mathbb R/\mathbb Z) \). Its kernel consists of the bounded integer-valued functions. Every function from \( G \) to \( \mathbb R/\mathbb Z \) has a real-valued lift with values in \( [0,1) \). Hence
\[
0
\longrightarrow
\ell^\infty(G,\mathbb Z)
\longrightarrow
\ell^\infty(G,\mathbb R)
\longrightarrow
\operatorname{Map}(G,\mathbb R/\mathbb Z)
\longrightarrow
0
\]
is a short exact sequence of right \( \mathbb ZH \)-modules.

Fix \( p<n \). The associated long exact sequence contains
\[
\begin{aligned}
H_{p+1}\bigl(H;\operatorname{Map}(G,\mathbb R/\mathbb Z)\bigr)
&\longrightarrow
H_p\bigl(H;\ell^\infty(G,\mathbb Z)\bigr)\\
&\longrightarrow
H_p\bigl(H;\ell^\infty(G,\mathbb R)\bigr)
\longrightarrow
H_p\bigl(H;\operatorname{Map}(G,\mathbb R/\mathbb Z)\bigr).
\end{aligned}
\]
For \( p<n-1 \), both outer groups vanish by Lemma~\ref{lem:function-coefficient-homology}. Hence the coefficient inclusion induces an isomorphism.

By the naturality in Lemma~\ref{lem:top-koszul-homology}, the map on \( H_n \) induced by the coefficient inclusion is identified with
\[
\ell^\infty(G/H,\mathbb R)
\otimes_{\mathbb Z}
\Lambda^nH
\longrightarrow
\operatorname{Map}(G/H,\mathbb R/\mathbb Z)
\otimes_{\mathbb Z}
\Lambda^nH,
\]
where reduction modulo \( \mathbb Z \) acts on the first tensor factor. Every function from \( G/H \) to \( \mathbb R/\mathbb Z \) has a bounded real-valued lift with values in \( [0,1) \), so the map on the first factor is surjective. Since \( \Lambda^nH \) is free of rank one, the displayed map is surjective. Exactness forces the connecting homomorphism to vanish. Therefore, the coefficient inclusion also induces an isomorphism in degree \( n-1 \).
\end{proof}

For \( p<n \), Lemma~\ref{lem:integral-real-homology-comparison} identifies the integral homology group, as an abelian group, with the additive group of a real vector space. We now determine its real dimension. Let \( \mathfrak c=2^{\aleph_0} \). We obtain the lower bound by constructing cycles detected by translation-invariant means.

A translation-invariant mean on a group \( Q \) is a positive normalized linear functional on \( \ell^\infty(Q,\mathbb R) \) that is invariant under the right translation action.

\begin{lemma}
\label{lem:separating-invariant-means}
Let \( Q\cong\mathbb Z^d \) with \( d\geq1 \). There exist subsets \( T_\alpha\subseteq Q \) and translation-invariant means
\[
\mu_\alpha\colon
\ell^\infty(Q,\mathbb R)
\longrightarrow
\mathbb R,
\qquad
\alpha\in\{0,1\}^{\mathbb N},
\]
such that
\begin{equation}
\label{eq:separating-means}
\mu_\alpha(1_{T_\beta})
=
\begin{cases}
1,
& \alpha=\beta,\\
0,
& \alpha\neq\beta.
\end{cases}
\end{equation}
\end{lemma}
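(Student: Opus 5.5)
We build the sets \( T_\alpha \) from a family of \( \mathfrak c \) almost disjoint sequences of increasingly large boxes, and take \( \mu_\alpha \) to be a limit of Følner averages over the boxes indexed by \( \alpha \).

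\textbf{Reduction and almost disjoint indexing.} We may first reduce to \( d=1 \) or work directly in \( \mathbb Z^d \); we work directly with cubes \( B(x,r)=x+\{0,\ldots,r-1\}^d \). The family \( \{0,1\}^{\mathbb N} \) is put in bijection with the branches of the binary tree. To each \( \alpha \) we associate the infinite set \( S_\alpha=\{\sigma_k(\alpha):k\in\mathbb N\} \) of its finite initial segments. Two distinct \( \alpha\neq\beta \) then give almost disjoint sets: \( S_\alpha\cap S_\beta \) is finite.

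\textbf{Placing the boxes.} We enumerate the countably many finite binary strings as \( \sigma^{(1)},\sigma^{(2)},\ldots \), ordered so that \( \sigma^{(m)} \) has length at most \( m \). To each \( \sigma^{(m)} \) we assign a cube \( B_m=B(x_m,r_m) \) with \( r_m=m \) (or \( r_m\to\infty \) faster). The centers \( x_m \) are chosen so far apart that the cubes are pairwise disjoint and separated by distance tending to infinity. We then set
\[
T_\beta=\bigcup\{B_m:\sigma^{(m)}\text{ is an initial segment of }\beta\}.
\]

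\textbf{Defining the means.} We fix a free ultrafilter \( \mathcal U \) on \( \mathbb N \). We define
\[
\mu_\alpha(f)=\lim_{k\to\mathcal U}\frac1{|B_{m_k(\alpha)}|}\sum_{x\in B_{m_k(\alpha)}}f(x),
\]
where \( m_k(\alpha) \) indexes the length-\( k \) initial segment of \( \alpha \). This functional is positive and normalized. It is translation-invariant because \( r_m\to\infty \), so the cubes form a Følner sequence: \( |(B+h)\,\triangle\, B|/|B|\to0 \) for each fixed \( h \).

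\textbf{Verifying the separation property.} If \( \alpha=\beta \), every \( B_{m_k(\alpha)} \) lies in \( T_\alpha \), so \( \mu_\alpha(1_{T_\alpha})=1 \). If \( \alpha\neq\beta \), then for all sufficiently large \( k \) the segment of \( \alpha \) is not an initial segment of \( \beta \). In that case \( B_{m_k(\alpha)} \) is disjoint from every cube in \( T_\beta \), using the pairwise disjointness of the cubes. Hence the averages are eventually \( 0 \), and \( \mu_\alpha(1_{T_\beta})=0 \).

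\textbf{Main obstacle.} The only delicate point is making the cubes genuinely pairwise disjoint while still letting the sizes grow. This is what forces the separation of the centers \( x_m \). It is easily arranged inductively, for instance by taking \( x_m=(4^m,0,\ldots,0) \) with \( r_m=m \), since \( 4^{m+1}-4^m>m \).
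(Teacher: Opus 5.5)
Your proposal is correct and is essentially the paper's argument: the sets of finite initial segments form an almost disjoint family indexing pairwise disjoint cubes of growing size, $T_\alpha$ is the union of the cubes along $\alpha$, and $\mu_\alpha$ is a free-ultrafilter limit of the F{\o}lner averages over those cubes. You should state explicitly that $k\mapsto m_k(\alpha)$ is injective, so $r_{m_k(\alpha)}\to\infty$ along the sequence actually used in $\mu_\alpha$, which is what translation invariance needs (the condition that $\sigma^{(m)}$ has length at most $m$ plays no role).
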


\begin{proof}
Let \( \{0,1\}^{<\mathbb N} \) denote the set of finite binary words, and choose an injective map \( \eta\colon\{0,1\}^{<\mathbb N}\to\mathbb N \). For \( \alpha\in\{0,1\}^{\mathbb N} \), let \( \alpha|_r \) denote its initial word of length \( r \), and define \( A_\alpha=\{\eta(\alpha|_r):r\geq1\} \). Each \( A_\alpha \) is infinite. Distinct binary sequences have only finitely many common initial words, so \( A_\alpha\cap A_\beta \) is finite whenever \( \alpha\neq\beta \).

Fix an identification \( Q\cong\mathbb Z^d \), and let \( B_k=\{-k,\ldots,k\}^d \) for \( k\geq1 \). We construct pairwise disjoint translates \( C_k=a_k+B_k \) inductively. Suppose that we have chosen \( C_1,\ldots,C_{k-1} \). The set
\[
R_k
=
\bigcup_{j<k}
\{x-y:x\in C_j,\ y\in B_k\}
\]
is finite. Choose \( a_k\in Q\setminus R_k \). If \( x\in C_k\cap C_j \) for some \( j<k \), then \( x=a_k+y \) for some \( y\in B_k \), so \( a_k=x-y\in R_k \), a contradiction. Hence the sets \( C_k \) are pairwise disjoint.

Fix \( q=(q_1,\ldots,q_d)\in Q \). For each \( i \), translation by \( q_i \) changes at most \( 2|q_i| \) points in every line parallel to the \( i \)-th coordinate. Summing over the coordinates gives
\[
|(B_k+q)\mathbin{\triangle}B_k|
\leq
2\sum_{i=1}^d
|q_i|(2k+1)^{d-1}.
\]
Dividing by \( |C_k|=|B_k|=(2k+1)^d \) gives
\begin{equation}
\label{eq:folner-cube-estimate}
\frac{|(C_k+q)\mathbin{\triangle}C_k|}{|C_k|}
\longrightarrow
0.
\end{equation}

For \( \alpha\in\{0,1\}^{\mathbb N} \), define \( T_\alpha=\bigcup_{k\in A_\alpha}C_k \). Since the sets \( C_k \) are pairwise disjoint,
\[
T_\alpha\cap T_\beta
=
\bigcup_{k\in A_\alpha\cap A_\beta}C_k.
\]
When \( \alpha\neq\beta \), the index set on the right is finite, and each \( C_k \) is finite. Hence \( T_\alpha\cap T_\beta \) is finite.

Fix a free ultrafilter \( \mathcal U \) on \( \mathbb N \). Every bounded real sequence is contained in a compact interval and therefore has a unique \( \mathcal U \)-limit. Continuity of addition and scalar multiplication implies that ultrafilter limits preserve these operations, and order preservation gives positivity.

For each \( \alpha \), enumerate \( A_\alpha=\{k_{\alpha,j}:j\geq1\} \) in increasing order, and define
\[
\mu_\alpha(\varphi)
=
\lim_{j\to\mathcal U}
\frac{1}{|C_{k_{\alpha,j}}|}
\sum_{x\in C_{k_{\alpha,j}}}
\varphi(x),
\qquad
\varphi\in\ell^\infty(Q,\mathbb R).
\]
The functional \( \mu_\alpha \) is linear, positive, and normalized.

We show that \( \mu_\alpha \) is translation-invariant. For \( q\in Q \) and \( \varphi\in\ell^\infty(Q,\mathbb R) \), the right action satisfies \( (\varphi\cdot q)(x)=\varphi(x-q) \). Hence
\[
\begin{aligned}
\left|
\frac{1}{|C_k|}
\sum_{x\in C_k}
(\varphi\cdot q)(x)
-
\frac{1}{|C_k|}
\sum_{x\in C_k}
\varphi(x)
\right|
&=
\left|
\frac{1}{|C_k|}
\sum_{x\in C_k-q}
\varphi(x)
-
\frac{1}{|C_k|}
\sum_{x\in C_k}
\varphi(x)
\right|\\
&\leq
\|\varphi\|_\infty
\frac{|(C_k-q)\mathbin{\triangle}C_k|}{|C_k|}.
\end{aligned}
\]
Equation~\eqref{eq:folner-cube-estimate}, applied to \( -q \), shows that the right-hand side converges to zero. The same convergence holds along the subsequence \( k=k_{\alpha,j} \). Therefore \( \mu_\alpha(\varphi\cdot q)=\mu_\alpha(\varphi) \).

Every set \( C_{k_{\alpha,j}} \) lies in \( T_\alpha \), so \( \mu_\alpha(1_{T_\alpha})=1 \). If \( \alpha\neq\beta \), then \( A_\alpha\cap A_\beta \) is finite. Hence \( C_{k_{\alpha,j}}\cap T_\beta=\varnothing \) for all sufficiently large \( j \), so \( \mu_\alpha(1_{T_\beta})=0 \). This proves Equation~\eqref{eq:separating-means}.
\end{proof}

We use these means to construct independent homology classes.

\begin{lemma}
\label{lem:independent-lower-koszul-classes}
For every \( 0\leq p<n \), the real vector space \( H_p(H;\ell^\infty(H,\mathbb R)) \) contains a linearly independent family of cardinality \( \mathfrak c \).
\end{lemma}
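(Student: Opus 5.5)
The plan is to produce explicit Koszul cycles in degree \( p \) and to separate them with linear functionals built from the invariant means of Lemma~\ref{lem:separating-invariant-means}. Using the fixed ordered basis, split \( H=P\oplus Q \) with \( P=\mathbb Ze_1\oplus\cdots\oplus\mathbb Ze_p \) and \( Q=\mathbb Ze_{p+1}\oplus\cdots\oplus\mathbb Ze_n \). Then \( Q\cong\mathbb Z^d \) with \( d=n-p\geq1 \), since \( p<n \). Lemma~\ref{lem:separating-invariant-means} applied to \( Q \) gives subsets \( T_\alpha\subseteq Q \) and translation-invariant means \( \mu_\alpha \) on \( \ell^\infty(Q,\mathbb R) \), indexed by \( \alpha\in\{0,1\}^{\mathbb N} \), which satisfy Equation~\eqref{eq:separating-means}.

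\emph{Cycles.} Write \( \pi_Q\colon H\to Q \) for the projection and set \( f_\beta=1_{T_\beta}\circ\pi_Q\in\ell^\infty(H,\mathbb R) \). Put \( \omega_p=e_1\wedge\cdots\wedge e_p \) and \( z_\beta=f_\beta\otimes\omega_p \). In Equation~\eqref{eq:translation-koszul-differential}, the terms with \( i\leq p \) vanish because \( f_\beta \) is invariant under \( e_i\in P \). The terms with \( i>p \) vanish because \( \iota_i\omega_p=0 \). Hence each \( z_\beta \) is a cycle. When \( p=0 \), \( \omega_0=1 \) and there is nothing to check.

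\emph{Detecting functionals.} Let \( \rho\colon\ell^\infty(H,\mathbb R)\to\ell^\infty(Q,\mathbb R) \) be restriction to the coset \( Q=\{0\}\oplus Q \). This map is \( Q \)-equivariant. Let \( c\colon\Lambda^pH\to\mathbb Z \) be the coefficient of \( \omega_p \) in the basis of wedges of basis vectors. Define
\[
\Phi_\alpha(l\otimes\gamma)=c(\gamma)\,\mu_\alpha(\rho(l)).
\]
The key step is to check that \( \Phi_\alpha \) vanishes on boundaries. Take \( \gamma=e_S \) with \( |S|=p+1 \) and compute \( \partial(l\otimes e_S) \). A term \( \iota_ie_S \) has nonzero \( \omega_p \)-coefficient only if \( S=\{1,\ldots,p\}\cup\{j\} \) with \( j>p \) and \( i=j \). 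So the only surviving contribution is \( \pm(l-l\cdot e_j) \) with \( e_j\in Q \). Because \( \rho \) is \( Q \)-equivariant and \( \mu_\alpha \) is \( Q \)-invariant, we get \( \mu_\alpha(\rho(l-l\cdot e_j))=0 \). Hence \( \Phi_\alpha \) descends to \( H_p(H;\ell^\infty(H,\mathbb R)) \). Moreover \( \rho(f_\beta)=1_{T_\beta} \), so \( \Phi_\alpha([z_\beta])=\delta_{\alpha\beta} \).

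\emph{Independence.} Suppose a finite combination \( \sum_\beta c_\beta[z_\beta] \) is zero. Applying \( \Phi_\alpha \) gives \( c_\alpha=0 \) for every \( \alpha \). Since \( |\{0,1\}^{\mathbb N}|=\mathfrak c \), this yields a linearly independent family of cardinality \( \mathfrak c \).

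The main obstacle is the boundary-vanishing check. It requires tracking exactly which contractions produce an \( \omega_p \) coefficient, and it requires \( \rho \) to be equivariant for precisely the directions \( e_j \) that appear there. This is why the means are taken on \( Q \), the complement of the directions used in \( \omega_p \).
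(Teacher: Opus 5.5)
Your proof is correct. It shares the paper's decomposition \(H=P\oplus Q\), the cycles \(z_\beta=f_\beta\otimes\omega_p\), and the separating means \(\mu_\alpha\) on \(Q\); it differs in how the detecting functional is built.

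\emph{The paper's route.} The paper also chooses a translation-invariant mean \(\nu\) on \(P\) (evaluation at \(0\) when \(p=0\)) and forms the iterated mean \(M_\alpha(F)=\mu_\alpha\bigl(y\mapsto\nu(x\mapsto F(x+y))\bigr)\). This mean is invariant under all of \(H\), so it is a \(\mathbb ZH\)-module map from \(\ell^\infty(H,\mathbb R)\) to the trivial module \(\mathbb R\). Functoriality in the coefficients then sends \([z_\beta]\) to \(M_\alpha(f_\beta)[1\otimes\omega_p]\) in \(H_p(H;\mathbb R)\cong\mathbb R\otimes_{\mathbb Z}\Lambda^pH\), where the Koszul differential is zero.

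\emph{Your route.} You restrict to the slice \(\{0\}\oplus Q\), which is only \(Q\)-equivariant. You compensate by pairing with the \(\omega_p\)-coordinate, using the observation that this coordinate of any boundary involves only differences \(l-l\cdot e_j\) with \(j>p\).

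\emph{Trade-offs.} Your version needs no mean on \(P\) and no check that the iterated mean is \(H\)-invariant. It also produces the scalar \(\delta_{\alpha\beta}\) directly. The cost is that \(\Phi_\alpha\) is a hand-built chain-level cocycle rather than a map induced by a coefficient homomorphism. You therefore have to verify by tracking contractions that it vanishes on boundaries. The paper's argument needs no such bookkeeping: it reduces detection to the known trivial-coefficient homology via functoriality.

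One point you leave implicit: applying \(\Phi_\alpha\) to a real linear combination requires \(\Phi_\alpha\) to be \(\mathbb R\)-linear in \(l\). This holds because \(\mu_\alpha\circ\rho\) is \(\mathbb R\)-linear. Also, your scalars \(c_\beta\) clash with your coefficient map \(c\), so it would be cleaner to rename one of them.
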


\begin{proof}
Fix \( p<n \). Let \( P=\langle e_1,\ldots,e_p\rangle \) when \( p>0 \), and let \( P=\{0\} \) when \( p=0 \). Let \( Q=\langle e_{p+1},\ldots,e_n\rangle \). Then \( H=P\oplus Q \) and \( Q\cong\mathbb Z^{n-p} \). Set \( \omega_p=e_1\wedge\cdots\wedge e_p \) when \( p>0 \), and set \( \omega_0=1 \).

By Lemma~\ref{lem:separating-invariant-means}, choose subsets \( T_\alpha\subseteq Q \) together with translation-invariant means \( \mu_\alpha\colon\ell^\infty(Q,\mathbb R)\to\mathbb R \) that satisfy Equation~\eqref{eq:separating-means}. Then choose a translation-invariant mean \( \nu\colon\ell^\infty(P,\mathbb R)\to\mathbb R \). When \( p=0 \), take evaluation at \( 0 \). When \( p>0 \), Lemma~\ref{lem:separating-invariant-means} provides such a mean.

For \( F\in\ell^\infty(H,\mathbb R) \), define
\[
M_\alpha(F)
=
\mu_\alpha\left(
y
\longmapsto
\nu\bigl(
x\longmapsto F(x+y)
\bigr)
\right).
\]
For every \( y\in Q \), the inner value has absolute value at most \( \|F\|_\infty \), so the resulting function belongs to \( \ell^\infty(Q,\mathbb R) \).

We verify that \( M_\alpha \) is invariant under the right \( H \)-action. If \( x_0\in P \), then \( (F\cdot x_0)(x+y)=F((x-x_0)+y) \), so translation invariance of \( \nu \) gives \( M_\alpha(F\cdot x_0)=M_\alpha(F) \). If \( y_0\in Q \), then \( (F\cdot y_0)(x+y)=F(x+(y-y_0)) \), so translation invariance of \( \mu_\alpha \) gives \( M_\alpha(F\cdot y_0)=M_\alpha(F) \). Since every \( h\in H \) has a unique form \( h=x_0+y_0 \), we obtain \( M_\alpha(F\cdot h)=M_\alpha(F) \). Since \( \nu \) and \( \mu_\alpha \) are linear, \( M_\alpha\colon\ell^\infty(H,\mathbb R)\to\mathbb R \) is a homomorphism of right \( \mathbb ZH \)-modules, where \( \mathbb R \) carries the trivial action.

For \( \beta\in\{0,1\}^{\mathbb N} \), define \( f_\beta(x+y)=1_{T_\beta}(y) \) for \( x\in P \) and \( y\in Q \). The decomposition \( H=P\oplus Q \) makes this formula unambiguous. For \( 1\leq i\leq p \), the function \( f_\beta \) is invariant under \( e_i \), so \( f_\beta-f_\beta\cdot e_i=0 \). For \( p<i\leq n \), we have \( \iota_i\omega_p=0 \). Hence every summand in Equation~\eqref{eq:translation-koszul-differential} vanishes, so \( z_\beta=f_\beta\otimes\omega_p \) is a cycle.

For every \( y\in Q \), \( \nu(x\mapsto f_\beta(x+y))=1_{T_\beta}(y) \). Equation~\eqref{eq:separating-means} gives
\[
M_\alpha(f_\beta)
=
\begin{cases}
1,
& \alpha=\beta,\\
0,
& \alpha\neq\beta.
\end{cases}
\]

The coefficient homomorphism \( M_\alpha \) induces the chain map \( M_\alpha\otimes\operatorname{id}_{\Lambda^*H} \) from the Koszul complex with coefficients in \( \ell^\infty(H,\mathbb R) \) to the Koszul complex with trivial coefficients \( \mathbb R \). Indeed, \( M_\alpha(l-l\cdot e_i)=0 \) for every \( l\in\ell^\infty(H,\mathbb R) \) and every \( 1\leq i\leq n \). The chain map sends \( z_\beta \) to \( M_\alpha(f_\beta)(1\otimes\omega_p) \). The Koszul differential with trivial coefficients is zero, so \( H_p(H;\mathbb R)\cong\mathbb R\otimes_{\mathbb Z}\Lambda^pH \), and \( [1\otimes\omega_p]\neq0 \).

Let \( I\subseteq\{0,1\}^{\mathbb N} \) be finite, and suppose that
\[
\sum_{\beta\in I}
a_\beta[z_\beta]
=
0,
\qquad
a_\beta\in\mathbb R.
\]
Fix \( \alpha\in I \). The homology homomorphism induced by \( M_\alpha \) gives \( a_\alpha[1\otimes\omega_p]=0 \). Hence \( a_\alpha=0 \). Since \( \alpha \) was arbitrary, every coefficient vanishes. Since \( \lvert\{0,1\}^{\mathbb N}\rvert=\mathfrak c \), the classes \( [z_\beta] \) form a linearly independent family of cardinality \( \mathfrak c \).
\end{proof}

We now determine the dimension of the real homology groups in degrees \( p<n \).

\begin{theorem}
\label{thm:lower-real-homology-dimension}
For every \( 0\leq p<n \),
\[
\dim_{\mathbb R}
H_p\bigl(H;\ell^\infty(G,\mathbb R)\bigr)
=
\mathfrak c.
\]
\end{theorem}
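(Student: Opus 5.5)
Two bounds: an upper bound $\dim\le\mathfrak c$ from cardinality, and a lower bound $\dim\ge\mathfrak c$ by reducing to Lemma~\ref{lem:independent-lower-koszul-classes}.

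\emph{Upper bound.} Since $G$ is countable, $\lvert\ell^\infty(G,\mathbb R)\rvert\le\lvert\mathbb R^G\rvert=\mathfrak c^{\aleph_0}=\mathfrak c$. The Koszul chain group $\ell^\infty(G,\mathbb R)\otimes_{\mathbb Z}\Lambda^pH$ is a finite direct sum of copies of $\ell^\infty(G,\mathbb R)$, so it also has cardinality at most $\mathfrak c$. Its subquotient $H_p(H;\ell^\infty(G,\mathbb R))$ therefore has cardinality at most $\mathfrak c$. A real vector space has a basis of cardinality at most that of the space, so $\dim_{\mathbb R}\le\mathfrak c$.

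\emph{Lower bound.} Choose a section $s\colon G/H\to G$ and fix a coset $c_0$. Define $\rho\colon\ell^\infty(H,\mathbb R)\to\ell^\infty(G,\mathbb R)$ by extension by zero from the coset $s(c_0)+H$, that is, $\rho(F)(s(c_0)+a)=F(a)$ and $\rho(F)=0$ off that coset. Define $\pi\colon\ell^\infty(G,\mathbb R)\to\ell^\infty(H,\mathbb R)$ by restriction, $\pi(f)(a)=f(s(c_0)+a)$.

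Because the $H$-action translates within each coset, both $\rho$ and $\pi$ are right $\mathbb ZH$-module homomorphisms. They are also $\mathbb R$-linear, and $\pi\circ\rho=\operatorname{id}$. By functoriality, $H_p(H;\rho)$ is an $\mathbb R$-linear split injection
\[
H_p\bigl(H;\ell^\infty(H,\mathbb R)\bigr)\longrightarrow H_p\bigl(H;\ell^\infty(G,\mathbb R)\bigr).
\]
The $\mathbb R$-structure on homology comes from the coefficients, so linearity is preserved. Applying this injection to the family of $\mathfrak c$ linearly independent classes $[z_\beta]$ from Lemma~\ref{lem:independent-lower-koszul-classes} gives $\mathfrak c$ linearly independent classes in the target. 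Hence $\dim_{\mathbb R}\ge\mathfrak c$.

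There is no serious obstacle. The points to check are that $\rho$ and $\pi$ are $H$-equivariant for the convention $(f\cdot h)(g)=f(g-h)$, which holds since the action preserves cosets, and that the cardinal arithmetic $\mathfrak c^{\aleph_0}=\mathfrak c$ gives the upper bound.
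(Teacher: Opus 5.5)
Your proposal is correct and follows essentially the same route as the paper. The upper bound comes from the cardinality bound $|\ell^\infty(G,\mathbb R)|\le\mathfrak c$ on the Koszul chain groups. The lower bound comes from the split injection induced by extension by zero from a single $H$-coset, with restriction as the retraction, combined with Lemma~\ref{lem:independent-lower-koszul-classes}. The only cosmetic difference is that you choose a full section $s\colon G/H\to G$, whereas a single coset representative $g_0$ suffices.
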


\begin{proof}
Fix \( p<n \). Since \( G \) is countable, \( |\ell^\infty(G,\mathbb R)|=\mathfrak c \). Indeed, \( \ell^\infty(G,\mathbb R) \) embeds into \( \mathbb R^{\mathbb N} \), which has cardinality \( \mathfrak c \), and the constant functions give the reverse inequality.

The degree-\( p \) Koszul chain group is a finite direct sum of copies of \( \ell^\infty(G,\mathbb R) \), so it has cardinality \( \mathfrak c \). Its homology therefore has cardinality at most \( \mathfrak c \), and \( \dim_{\mathbb R}H_p(H;\ell^\infty(G,\mathbb R))\leq\mathfrak c \).

Choose a coset \( O=g_0+H \). Since \( O \) is \( H \)-invariant, extension by zero and restriction define right \( \mathbb ZH \)-module homomorphisms
\[
\operatorname{ext}_O\colon
\ell^\infty(O,\mathbb R)
\longrightarrow
\ell^\infty(G,\mathbb R)
\]
and
\[
\operatorname{res}_O\colon
\ell^\infty(G,\mathbb R)
\longrightarrow
\ell^\infty(O,\mathbb R).
\]
They satisfy \( \operatorname{res}_O\circ\operatorname{ext}_O=\operatorname{id} \), so the induced homology map is split injective.

Translation by \( -g_0 \) gives an isomorphism of right \( H \)-sets \( O\cong H \), and therefore an isomorphism of the corresponding coefficient modules. 

Lemma~\ref{lem:independent-lower-koszul-classes} gives \( \dim_{\mathbb R}H_p(H;\ell^\infty(O,\mathbb R))\geq\mathfrak c \). Since the induced homology map is split injective, it follows that \( \dim_{\mathbb R}H_p(H;\ell^\infty(G,\mathbb R))\geq\mathfrak c \). The two bounds prove the theorem.
\end{proof}

We now return to integral coefficients. Let \( \mathbb Q^{(\mathfrak c)} \) denote the direct sum of \( \mathfrak c \) copies of \( \mathbb Q \).

\begin{theorem}
\label{thm:integer-coefficient-group-homology}
For every \( p\geq0 \),
\[
H_p\bigl(H;\ell^\infty(G,\mathbb Z)\bigr)
\cong
\begin{cases}
\mathbb Q^{(\mathfrak c)},
& 0\leq p<n,\\
\ell^\infty(G/H,\mathbb Z)
\otimes_{\mathbb Z}
\Lambda^nH,
& p=n,\\
0,
& p>n.
\end{cases}
\]
The isomorphisms for \( 0\leq p<n \) are noncanonical. The chosen ordered basis of \( H \) identifies the degree-\( n \) group with \( \ell^\infty(G/H,\mathbb Z) \).
\end{theorem}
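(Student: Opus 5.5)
The proof splits into the three ranges of \( p \), and each range follows quickly from the preceding lemmas.

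\emph{Degrees \( p>n \).} The Koszul resolution \( K_* \) has \( K_p=\mathbb ZH\otimes_{\mathbb Z}\Lambda^pH \), and \( \Lambda^pH=0 \) for \( p>n \). So the chain groups computing \( H_p(H;\ell^\infty(G,\mathbb Z)) \) vanish in these degrees, and therefore so does the homology.

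\emph{Degree \( p=n \).} This is the specialization recorded in Lemma~\ref{lem:top-koszul-homology}, which gives a canonical isomorphism \( H_n(H;\ell^\infty(G,\mathbb Z))\cong\ell^\infty(G/H,\mathbb Z)\otimes_{\mathbb Z}\Lambda^nH \). Fix the ordered basis \( e_1,\ldots,e_n \). Then \( \omega=e_1\wedge\cdots\wedge e_n \) generates \( \Lambda^nH\cong\mathbb Z \), and \( f\mapsto f\otimes\omega \) identifies \( \ell^\infty(G/H,\mathbb Z) \) with this group. When \( n=0 \), the same statement reads \( H_0=\ell^\infty(G,\mathbb Z) \), and the identification is \( H/H=0 \), so \( G/H=G \).

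\emph{Degrees \( 0\le p<n \).} Lemma~\ref{lem:integral-real-homology-comparison} shows that the coefficient inclusion \( \ell^\infty(G,\mathbb Z)\to\ell^\infty(G,\mathbb R) \) induces a group isomorphism
\[
H_p\bigl(H;\ell^\infty(G,\mathbb Z)\bigr)\cong H_p\bigl(H;\ell^\infty(G,\mathbb R)\bigr).
\]
The right-hand side is a real vector space, because the Koszul complex with real coefficients is a complex of \( \mathbb R \)-vector spaces with \( \mathbb R \)-linear differentials. By Theorem~\ref{thm:lower-real-homology-dimension}, its dimension is \( \mathfrak c \), so as abelian groups \( H_p\cong\mathbb R^{(\mathfrak c)} \).

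It remains to show that \( \mathbb R^{(\mathfrak c)}\cong\mathbb Q^{(\mathfrak c)} \) as abelian groups. Any real vector space is in particular a \( \mathbb Q \)-vector space. Choose a Hamel basis \( B \) of \( \mathbb R \) over \( \mathbb Q \). Its cardinality is \( \mathfrak c \): it is at most \( |\mathbb R|=\mathfrak c \), and it cannot be countable, since \( \mathbb Q^{(\aleph_0)} \) is countable. Hence
\[
\dim_{\mathbb Q}\mathbb R^{(\mathfrak c)}=\mathfrak c\cdot\mathfrak c=\mathfrak c,
\]
and two \( \mathbb Q \)-vector spaces of equal dimension are isomorphic, so \( \mathbb R^{(\mathfrak c)}\cong\mathbb Q^{(\mathfrak c)} \). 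These isomorphisms rely on choices of Hamel bases, which is why they are noncanonical.

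The only step requiring care is this cardinal arithmetic, and the point is that no additional homological input is needed there. All the substantive work — the integral-to-real comparison and the dimension count — is already contained in the cited lemmas.
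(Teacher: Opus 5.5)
Your proposal is correct and follows the paper's proof essentially step for step: for \( p<n \) it combines Lemma~\ref{lem:integral-real-homology-comparison}, Theorem~\ref{thm:lower-real-homology-dimension}, and the cardinal computation \( \dim_{\mathbb Q}\mathbb R=\mathfrak c \); for \( p=n \) it invokes Lemma~\ref{lem:top-koszul-homology}; and for \( p>n \) it uses the vanishing of the Koszul chain groups. The only blemish is the phrase ``\( H/H=0 \)'' in your \( n=0 \) remark, which should read \( H=\{0\} \).
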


\begin{proof}
Fix \( p<n \), and set \( V=H_p(H;\ell^\infty(G,\mathbb R)) \). Lemma~\ref{lem:integral-real-homology-comparison} identifies \( H_p(H;\ell^\infty(G,\mathbb Z)) \), as an abelian group, with the additive group underlying \( V \). Theorem~\ref{thm:lower-real-homology-dimension} gives \( \dim_{\mathbb R}V=\mathfrak c \).

We claim that \( \dim_{\mathbb Q}\mathbb R=\mathfrak c \). A \( \mathbb Q \)-basis of \( \mathbb R \) is a subset of \( \mathbb R \), so its cardinality is at most \( \mathfrak c \). If its cardinality is \( \kappa \), then every real number is a finite rational linear combination of basis elements. Since \( \mathbb Q \) is countable, \( |\mathbb R| = \max(\aleph_0,\kappa) \). Because \( |\mathbb R|=\mathfrak c>\aleph_0 \), we obtain \( \kappa=\mathfrak c \). Therefore
\[
\dim_{\mathbb Q}V
=
\dim_{\mathbb R}V
\cdot
\dim_{\mathbb Q}\mathbb R
=
\mathfrak c.
\]
Hence the additive group underlying \( V \) is a \( \mathbb Q \)-vector space of dimension \( \mathfrak c \), so
\[
H_p\bigl(H;\ell^\infty(G,\mathbb Z)\bigr)
\cong
\mathbb Q^{(\mathfrak c)}.
\]
This isomorphism is noncanonical because it requires a choice of a \( \mathbb Q \)-basis of \( V \).

Lemma~\ref{lem:top-koszul-homology} gives the natural isomorphism
\[
H_n\bigl(H;\ell^\infty(G,\mathbb Z)\bigr)
\cong
\ell^\infty(G/H,\mathbb Z)
\otimes_{\mathbb Z}
\Lambda^nH.
\]
The chosen ordered basis identifies \( \Lambda^nH \) with \( \mathbb Z \), and therefore identifies the degree-\( n \) group with \( \ell^\infty(G/H,\mathbb Z) \). Since the Koszul chain groups vanish above degree \( n \), the homology vanishes for \( p>n \).
\end{proof}

\subsection{The spectral sequences for the transformation groupoids}

For every \( A\in\mathcal S \), Proposition~\ref{prop:spectral-sequence} and Equation~\eqref{eq:groupoid-homology-spectral-sequence} give a convergent spectral sequence
\begin{equation}
\label{eq:stage-spectral-sequence}
E^2_{p,q}(A)
\cong
H_p\bigl(\widehat A\rtimes H;K_q(\mathbb C)\bigr)
\Longrightarrow
K_{p+q}\bigl(C_r^*(\widehat A\rtimes H)\bigr).
\end{equation}

Bott periodicity and Lemma~\ref{lem:stage-groupoid-group-homology} identify its \( E^2 \)-page:
\begin{equation}
\label{eq:stage-second-page}
E^2_{p,q}(A)
\cong
\begin{cases}
H_p\bigl(H;C(\widehat A,\mathbb Z)\bigr), & q \text{ is even},\\
0, & q \text{ is odd}.
\end{cases}
\end{equation}

Since the Koszul resolution of the trivial \( \mathbb ZH \)-module has length \( n \), the group homology groups in Equation~\eqref{eq:stage-second-page} are zero for \( p>n \). Therefore
\begin{equation}
\label{eq:stage-horizontal-bound}
E^2_{p,q}(A)=0
\qquad
(p>n).
\end{equation}

\subsection{Functoriality of the spectral sequences}

We next show that the spectral sequences associated to the transformation groupoids \( \widehat A\rtimes H \) are compatible with the inclusions in the directed system \( \mathcal S \). For \( A\subseteq B \), the factor map determined by \( \pi_{BA} \) induces a morphism of spectral sequences by Proposition~\ref{prop:miller-functoriality}. We identify the induced maps on the second page and on the abutment and then verify compatibility with composition.

Let \( A\subseteq B \). Define \( \varphi_{BA}\colon\widehat B\rtimes H\to\widehat A\rtimes H \) by \( \varphi_{BA}(x,h)=(\pi_{BA}(x),h) \). Since \( \pi_{BA} \) is continuous, surjective, and \( H \)-equivariant, \( \varphi_{BA} \) is a continuous surjective groupoid homomorphism. We verify that \( \varphi_{BA} \) is a factor map. Let \( K\subseteq\widehat A\times H \) be compact. Since \( H \) is discrete, the projection of \( K \) onto \( H \) is finite. For each \( h \) in this projection, the set \( K_h=\{x\in\widehat A:(x,h)\in K\} \) is compact. Hence
\[
\varphi_{BA}^{-1}(K)
=
\bigcup_h
\pi_{BA}^{-1}(K_h)\times\{h\}
\]
is a finite union of compact sets because \( \pi_{BA} \) is proper. Thus \( \varphi_{BA} \) is proper. For each \( x\in\widehat B \), the source fiber at \( x \) consists of the arrows \( (x\cdot(-h),h) \) with \( h\in H \). The restriction of \( \varphi_{BA} \) sends the arrow with \( H \)-coordinate \( h \) to the arrow with the same \( H \)-coordinate in the source fiber at \( \pi_{BA}(x) \). Hence \( \varphi_{BA} \) is bijective on source fibers. Therefore \( \varphi_{BA} \) is a factor map in the sense of \cite[Example~3.6]{miller2023functors}.

The factor map induces the associated action correspondence \( \Omega_{BA} \) from \( \widehat A\rtimes H \) to \( \widehat B\rtimes H \). Its action anchor is \( \pi_{BA}\colon\widehat B\to\widehat A \), which is proper because \( \widehat B \) is compact and \( \widehat A \) is Hausdorff. Hence \( \Omega_{BA} \) is proper by \cite[Example~3.5]{miller2023functors}. By \cite[Example~5.1]{miller2025isomorphisms}, the proper correspondence determines the canonical morphism
\[
f_{BA}=f_{\Omega_{BA}}
\in
KK^{\widehat A\rtimes H}
\left(
C(\widehat A),
\operatorname{Ind}_{\Omega_{BA}}C(\widehat B)
\right).
\]
The trivial families \( \{\widehat A\} \) and \( \{\widehat B\} \) are compatible with \( \Omega_{BA} \) by \cite[Proposition~3.22]{miller2025isomorphisms}. Proposition~\ref{prop:miller-functoriality}, applied to \( \Omega_{BA} \) and \( f_{BA} \), therefore gives a morphism
\begin{equation}
\label{eq:stage-spectral-sequence-map}
m_{BA}^r:
E^r_{p,q}(A)
\longrightarrow
E^r_{p,q}(B).
\end{equation}

\begin{proposition}
\label{prop:stage-map-identification}
Let \( A\subseteq B \).

The morphism \( m_{BA}^2 \) induces
\[
H_p(H;\pi_{BA}^*):
H_p\bigl(H;C(\widehat A,\mathbb Z)\bigr)
\longrightarrow
H_p\bigl(H;C(\widehat B,\mathbb Z)\bigr)
\]
in every even row of the \( E^2 \)-page. The induced map on the abutment is
\[
K_*(\pi_{BA}^*\rtimes_rH):
K_*\bigl(C_r^*(\widehat A\rtimes H)\bigr)
\longrightarrow
K_*\bigl(C_r^*(\widehat B\rtimes H)\bigr).
\]
\end{proposition}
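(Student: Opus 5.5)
We plan to read off both identifications from Proposition~\ref{prop:miller-functoriality}, applied to the action correspondence \( \Omega_{BA} \) and the class \( f_{BA} \). The proof therefore splits into two computations. On the \( E^2 \)-page we identify \( H_p(\Omega_{BA};K_q(f_{BA})) \). On the abutment we identify the localisation map of the associated ABC morphism with \( K_*(\pi_{BA}^*\rtimes_rH) \).

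For the \( E^2 \)-page, we first identify the coefficient map. Since \( \Omega_{BA} \) is the action correspondence of the factor map \( \varphi_{BA} \), induction along \( \Omega_{BA} \) pulls sheaves on \( \widehat B \) back along the anchor. Concretely, \( \operatorname{Ind}_{\Omega_{BA}}C(\widehat B) \) is \( C(\widehat B) \), viewed as a \( \widehat A\rtimes H \)-algebra through \( \pi_{BA}^* \). The class \( f_{BA} \) of \cite[Example~5.1]{miller2025isomorphisms} is represented by the equivariant \( * \)-homomorphism \( \pi_{BA}^*\colon C(\widehat A)\to C(\widehat B) \), since the correspondence is proper. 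On \( K_0 \) sheaves, and using Equation~\eqref{eq:stage-coefficient-k-theory} together with naturality of \( K_0\cong C(-,\mathbb Z) \) under pullbacks, \( K_q(f_{BA}) \) is pullback of locally constant integer-valued functions for even \( q \) and zero for odd \( q \). We then compute the groupoid homology map \( H_p(\Omega_{BA};-) \) at the chain level. Because \( \varphi_{BA} \) is bijective on source fibers, the map on \( C_c((\widehat A\rtimes H)^{(p)},\mathbb Z) \) is pullback along \( \varphi_{BA}^{(p)}=\pi_{BA}\times\operatorname{id}_{H^p} \). In the notation of Lemma~\ref{lem:stage-groupoid-group-homology}, this sends \( f[h_1|\cdots|h_p] \) to \( (\pi_{BA}^*f)[h_1|\cdots|h_p] \). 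That lemma identifies the resulting chain map with the one inducing \( H_p(H;\pi_{BA}^*) \), which gives the first claim in each even row.

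For the abutment, part (3) of Proposition~\ref{prop:miller-functoriality} says that the limit map is the localisation of the ABC morphism. For a proper correspondence with class \( f_{BA} \), we expect this to be the map on \( K \)-theory of reduced crossed products induced by \( f_{BA} \) after composing with the Morita/induction identification \( (\widehat A\rtimes H)\ltimes_r\operatorname{Ind}_{\Omega_{BA}}C(\widehat B)\to(\widehat B\rtimes H)\ltimes_r C(\widehat B) \). Under Lemma~\ref{lem:transformation-groupoid-crossed-product}, both crossed products become \( C(\widehat B)\rtimes_rH \), and the identification is the identity. The map induced by \( \pi_{BA}^* \) is then \( \pi_{BA}^*\rtimes_rH \). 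Since strong Baum--Connes holds for these amenable groupoids, localisation is an isomorphism, so the map on \( K_*(C_r^*) \) is \( K_*(\pi_{BA}^*\rtimes_rH) \).

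The main obstacle is this last identification: unwinding Miller's induction functor and descent for a factor-map correspondence to confirm that, after the canonical identifications, one obtains exactly \( \pi_{BA}^*\rtimes_rH \) with no twist. I would first verify it on generators \( f\,u_h \) with \( f\in C(\widehat A) \), checking that these map to \( (\pi_{BA}^*f)\,u_h \), and then extend by continuity.
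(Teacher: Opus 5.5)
Your $E^2$-page argument is the same as the paper's. The chain map is pullback along $\varphi_{BA}^{(p)}=\pi_{BA}\times\operatorname{id}_{H^p}$, so $f[h_1|\cdots|h_p]\mapsto(\pi_{BA}^*f)[h_1|\cdots|h_p]$, and part (2) of Proposition~\ref{prop:miller-functoriality} then gives $H_p(H;\pi_{BA}^*)$. Your appeal to bijectivity on source fibers is only a heuristic, though. The fact that the correspondence-induced map on groupoid homology is exactly this pullback is a result, which the paper cites as \cite[Example~3.9]{miller2025ample}, and you should cite it rather than assert it. Your description of $\operatorname{Ind}_{\Omega_{BA}}C(\widehat B)$, of $f_{BA}=[\pi_{BA}^*]$, and of $K_q(f_{BA})$ is fine.

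The genuine gap is on the abutment. Part (3) of Proposition~\ref{prop:miller-functoriality} only says that the limit-sheet map is the localisation map of the ABC morphism attached to $(\Omega_{BA},f_{BA})$. That map is defined on the localisations, not on $K_*(C_r^*(\widehat A\rtimes H))$. Your sentence ``we expect this to be the map on $K$-theory of reduced crossed products induced by $f_{BA}$'' is precisely what has to be proved. Your closing inference (``strong Baum--Connes holds, so localisation is an isomorphism'') does not prove it. Knowing that the assembly map is an isomorphism for each groupoid separately says nothing about whether those isomorphisms intertwine the ABC localisation map with the descent map. What is missing is the naturality of assembly with respect to the correspondence.

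The paper supplies this in three steps:
\begin{enumerate}
\item Torsion-free stabilizers give condition (P) for the families $\{\widehat A\}$ and $\{\widehat B\}$, hence proper complementary pairs. Then \cite[Example~5.11]{miller2025isomorphisms} identifies the localisation map with $K_*^{\mathrm{top}}(\Omega_{BA};f_{BA})$.
\item Amenability gives the factorisation hypothesis there, so $K_*^{\mathrm{red}}(\Omega_{BA};f_{BA})$ is defined. Baum--Connes then gives $K_*^{\mathrm{top}}(\Omega_{BA};f_{BA})=K_*^{\mathrm{red}}(\Omega_{BA};f_{BA})$.
\item For the canonical class, \cite[Proposition~5.4, Definition~6.13, Example~3.6]{miller2023functors} identify $K_*^{\mathrm{red}}(\Omega_{BA};f_{BA})$ with the $K$-theory map of the inclusion $a\mapsto a\circ\varphi_{BA}$ of reduced groupoid $C^*$-algebras.
\end{enumerate}

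Only after these steps does your proposed generator check $f\,u_h\mapsto(\pi_{BA}^*f)\,u_h$ become relevant. That check is correct, but it handles only the final concrete identification with $\pi_{BA}^*\rtimes_rH$. It does not address the comparison between the ABC localisation map and the map on reduced crossed products.
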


\begin{proof}
We first consider the map on the second page. Under the identification \( (\widehat A\rtimes H)^{(p)}\cong\widehat A\times H^p \), the map induced by \( \varphi_{BA} \) on composable \( p \)-tuples is
\[
(x,h_1,\ldots,h_p)
\longmapsto
(\pi_{BA}(x),h_1,\ldots,h_p).
\]
Its pullback sends \( f[h_1|\cdots|h_p] \) to \( (\pi_{BA}^*f)[h_1|\cdots|h_p] \). Example~3.9 of \cite{miller2025ample} identifies the correspondence-induced map on groupoid homology with this pullback. Proposition~\ref{prop:miller-functoriality} therefore identifies this map with \( m_{BA}^2 \).

We next identify the induced map on the abutment. By Proposition~\ref{prop:miller-functoriality}, the map on the limit sheet is the localisation map of the ABC morphism associated to \( (\Omega_{BA},f_{BA}) \). Since \( \widehat A\rtimes H \) and \( \widehat B\rtimes H \) have torsion-free stabilizers, the families \( \{\widehat A\} \) and \( \{\widehat B\} \) satisfy condition (P) by \cite[Example~3.18]{miller2025isomorphisms}. By \cite[Proposition~3.17]{miller2025isomorphisms}, these families yield the proper complementary pairs. Hence \cite[Example~5.11]{miller2025isomorphisms} identifies the localisation map with \( K_*^{\mathrm{top}}(\Omega_{BA};f_{BA}) \).

Since \( \widehat A\rtimes H \) is amenable, the factorisation hypothesis in \cite[Example~5.11]{miller2025isomorphisms} holds. Hence \( K_*^{\mathrm{red}}(\Omega_{BA};f_{BA}) \) is defined. Since \( \widehat A\rtimes H \) and \( \widehat B\rtimes H \) satisfy the strong Baum--Connes conjecture with coefficients, they satisfy the Baum--Connes conjecture with the coefficients required in \cite[Example~5.11]{miller2025isomorphisms}. Therefore \( K_*^{\mathrm{top}}(\Omega_{BA};f_{BA}) = K_*^{\mathrm{red}}(\Omega_{BA};f_{BA}) \).

For the canonical class \( f_{BA}=f_{\Omega_{BA}} \), Proposition~5.4 and Definition~6.13 of \cite{miller2023functors} identify the corresponding full map with \( K_*(C^*(\Omega_{BA})) \). Using amenability and \cite[Example~3.6]{miller2023functors}, the reduced counterpart is the \( K \)-theory map of the inclusion of reduced groupoid \( C^* \)-algebras induced by \( \varphi_{BA} \). The inclusion induced by \( \varphi_{BA} \) sends \( a\in C_c(\widehat A\rtimes H) \) to \( a\circ\varphi_{BA}\in C_c(\widehat B\rtimes H) \), where \( (a\circ\varphi_{BA})(x,h)=a(\pi_{BA}(x),h) \). Under the canonical isomorphisms of Lemma~\ref{lem:transformation-groupoid-crossed-product}, the inclusion induced by \( \varphi_{BA} \) is \( \pi_{BA}^*\rtimes_r H \). Hence the induced map on the abutment is \( K_*(\pi_{BA}^*\rtimes_r H) \).
\end{proof}

It remains to show that the morphisms \( m_{BA}^r \) make the spectral sequences indexed by \( A\in\mathcal S \) into a directed system. Let \( A\subseteq B\subseteq C \). We have \( \varphi_{BA}\circ\varphi_{CB}=\varphi_{CA} \). The action correspondences \( \Omega_{BA} \) and \( \Omega_{CB} \) have correspondence spaces \( \widehat B\rtimes H \) and \( \widehat C\rtimes H \), respectively. Their composite is the balanced product \( ((\widehat B\rtimes H)\times_{\widehat B}(\widehat C\rtimes H))/(\widehat B\rtimes H) \), where the fiber product is taken with respect to \( s \) on \( \widehat B\rtimes H \) and \( \pi_{CB}\circ r \) on \( \widehat C\rtimes H \). The map \( (\omega,\lambda)\mapsto\omega\cdot\lambda \), where \( \omega\in\widehat B\rtimes H \) acts on \( \lambda\in\widehat C\rtimes H \) through the left action associated to \( \varphi_{CB} \), is constant on the balancing relation by associativity of this action. It therefore descends to a map \( [\omega,\lambda]\mapsto\omega\cdot\lambda \) on the balanced product. Its inverse sends \( \lambda \) to \( [1_{\pi_{CB}(r(\lambda))},\lambda] \), so the descended map is an equivariant homeomorphism onto the correspondence space \( \widehat C\rtimes H \) of \( \Omega_{CA} \). Under this homeomorphism, the left \( \widehat A\rtimes H \)-action is the action associated to \( \varphi_{BA}\circ\varphi_{CB}=\varphi_{CA} \), and the right \( \widehat C\rtimes H \)-action is the usual right multiplication action. Hence \( \Omega_{CB}\circ\Omega_{BA}\cong\Omega_{CA} \).

By \cite[Remark~5.5]{miller2025isomorphisms}, the canonical triples associated to \( (\Omega_{BA},f_{BA}) \) and \( (\Omega_{CB},f_{CB}) \) compose, up to equivalence, to the canonical triple associated to \( (\Omega_{CA},f_{CA}) \). By \cite[Example~5.8]{miller2025isomorphisms}, these canonical triples determine the corresponding ABC morphisms. The functoriality in \cite[Theorem~6.1]{miller2025isomorphisms} therefore gives \( m_{CB}^r\circ m_{BA}^r=m_{CA}^r \) for every \( r\ge2 \). For \( A\in\mathcal S \), we have \( \pi_{AA}=\operatorname{id}_{\widehat A} \) and \( \varphi_{AA}=\operatorname{id}_{\widehat A\rtimes H} \), so the construction gives the identity correspondence. The functoriality in \cite[Theorem~6.1]{miller2025isomorphisms} therefore gives \( m_{AA}^r=\operatorname{id} \) for every \( r\ge2 \). Hence the spectral sequences form a directed system under the morphisms \( m_{BA}^r \). By Proposition~\ref{prop:stage-map-identification}, the induced maps on the second page are \( H_p(H;\pi_{BA}^*) \), and the induced maps on the abutment are \( K_*(\pi_{BA}^*\rtimes_r H) \).

\subsection{Rational degeneration}

Fix \( A\in\mathcal S \). For the transformation groupoid \( \widehat A\rtimes H \), the Proietti--Yamashita spectral sequence in Equation~(0.1) is the spectral sequence in Equation~\eqref{eq:stage-spectral-sequence}. Proietti and Yamashita state that Equation~(0.1) is rationally degenerate at the \( E^2 \)-sheet \cite[p.~2, Equation~(0.1) and the paragraph following it]{proietti2025chern}. Thus, for every \( r\geq2 \),
\begin{equation}
\label{eq:stage-rational-degeneration}
d_r(A)\otimes\operatorname{id}_{\mathbb Q}=0.
\end{equation}

Since \( A\in\mathcal S \) was arbitrary, Equation~\eqref{eq:stage-rational-degeneration} holds for every \( A\in\mathcal S \). We use this vanishing in the following subsection to prove that the higher differentials of the directed-colimit spectral sequence vanish after rationalization.

\subsection{The spectral sequence after taking directed colimits}
\label{sec:the-spectral-sequence-after-taking-directed-colimits}

We take the directed colimit of the spectral sequences associated to \( A\in\mathcal S \). Proposition~\ref{prop:crossed-product-stage-colimit} identifies the directed colimit of the \( C^* \)-algebras whose \( K \)-theory groups form the abutments with \( C_u^*(G,\mathcal E) \).

For every \( r\ge2 \), let
\[
E^r_{p,q}
=
\varinjlim_{A\in\mathcal S}E^r_{p,q}(A).
\]
Since the maps in Equation~\eqref{eq:stage-spectral-sequence-map} form morphisms of spectral sequences, they induce differentials
\[
d_r\colon
E^r_{p,q}
\longrightarrow
E^r_{p-r,q+r-1}.
\]
Filtered colimits are exact in the category of abelian groups. They therefore commute with the kernels and images defining the next page. Hence
\[
E^{r+1}_{p,q}
\cong
\varinjlim_{A\in\mathcal S}E^{r+1}_{p,q}(A),
\]
so the groups \( E^r_{p,q} \) form a spectral sequence.

Exactness of filtered colimits and the finite free Koszul resolution imply that group homology commutes with the colimit over \( \mathcal S \). Lemma~\ref{lem:coefficient-colimits} and Equation~\eqref{eq:stage-second-page} therefore give
\begin{equation}
\label{eq:colimit-second-page}
E^2_{p,q}
\cong
\begin{cases}
H_p\bigl(H;\ell^\infty(G,\mathbb Z)\bigr), & q \text{ even},\\
0, & q \text{ odd},
\end{cases}
\end{equation}
and
\begin{equation}
\label{eq:colimit-horizontal-bound}
E^2_{p,q}=0
\qquad
(p>n),
\end{equation}
because the Koszul resolution has length \( n \).

Since tensor products commute with filtered colimits, Equation~\eqref{eq:stage-rational-degeneration} gives
\begin{equation}
\label{eq:colimit-rational-degeneration}
d_r\otimes_{\mathbb Z}\mathbb Q=0
\qquad
(r\ge2).
\end{equation}
Therefore every element of \( \operatorname{im}(d_r) \) is torsion.

The remaining point is to identify the abutment of the colimit spectral sequence.

\begin{proposition}
\label{prop:directed-colimit-convergence}
The spectral sequence \( E^r_{p,q} \) strongly converges to \( K_{p+q}(C_u^*(G,\mathcal E)) \).
\end{proposition}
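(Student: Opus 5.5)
We will prove the claim by passing convergence of the stage spectral sequences through the directed colimit, using that each stage spectral sequence has a uniformly bounded filtration. For each \( A\in\mathcal S \), Equation~\eqref{eq:stage-horizontal-bound} shows that \( E^2_{p,q}(A)=0 \) for \( p>n \) and for \( p<0 \). Convergence in Proposition~\ref{prop:spectral-sequence} therefore provides a finite filtration
\[
0=F_{-1}K_m(A)\subseteq F_0K_m(A)\subseteq\cdots\subseteq F_nK_m(A)=K_m\bigl(C_r^*(\widehat A\rtimes H)\bigr)
\]
with \( F_pK_m(A)/F_{p-1}K_m(A)\cong E^\infty_{p,m-p}(A) \). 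Since the length is at most \( n+1 \), we have \( E^\infty(A)=E^{n+2}(A) \). We then use the following facts about the morphisms \( m^r_{BA} \) of Proposition~\ref{prop:miller-functoriality}.
\begin{enumerate}
\item They are compatible with this filtration.
\item They induce the maps \( K_*(\pi_{BA}^*\rtimes_rH) \) on the abutment, by Proposition~\ref{prop:stage-map-identification}.
\item On associated graded pieces, they induce the limit-sheet maps \( m^\infty_{BA} \).
\end{enumerate}
Fact 1 is the main obstacle. The statement quoted in Proposition~\ref{prop:miller-functoriality} records only the map on pages and the identification of the limit sheet with the localisation map. We would therefore go back to the construction in \cite{miller2025isomorphisms}. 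There the spectral sequence arises from the exact couple attached to a finite projective resolution in the triangulated category \( KK^{\mathcal G} \), which corresponds to the skeletal or ABC filtration. A morphism of ABC spectral sequences comes from a map of these exact couples. Such a map sends the images \( F_pK_m \) of the \( p \)-th skeleton into one another. This yields a filtered homomorphism on abutments whose associated graded map is \( m^\infty_{BA} \).

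Next, we take colimits. Filtered colimits of abelian groups are exact. The groups \( F_pK_m:=\varinjlim_A F_pK_m(A) \) therefore form an increasing filtration of \( \varinjlim_A K_m(C_r^*(\widehat A\rtimes H)) \) of length at most \( n+1 \), with quotients
\[
F_pK_m/F_{p-1}K_m\cong\varinjlim_A E^\infty_{p,m-p}(A).
\]
Since \( E^{r+1} \) of the colimit spectral sequence is the colimit of the \( E^{r+1}(A) \), and \( E^\infty(A)=E^{n+2}(A) \) uniformly in \( A \), we obtain \( \varinjlim_AE^\infty(A)=E^{n+2}=E^\infty \) for the colimit spectral sequence.

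Finally, we identify the abutment. Continuity of \( K \)-theory under inductive limits of \( C^* \)-algebras with injective connecting maps gives the first isomorphism below, and Proposition~\ref{prop:crossed-product-stage-colimit} gives the second:
\[
\varinjlim_A K_m\bigl(C_r^*(\widehat A\rtimes H)\bigr)\cong K_m\Bigl(\varinjlim_A C_r^*(\widehat A\rtimes H)\Bigr)\cong K_m\bigl(C_u^*(G,\mathcal E)\bigr).
\]
The connecting maps used in the first isomorphism are exactly the abutment maps identified in Proposition~\ref{prop:stage-map-identification}. The filtration is finite, exhaustive, and Hausdorff, and its graded pieces equal \( E^\infty_{p,m-p} \). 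Together with the vanishing of \( E^2_{p,q} \) outside \( 0\le p\le n \), this gives strong convergence.
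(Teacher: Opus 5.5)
Your proposal is correct and follows essentially the same route as the paper. The shared steps are: stagewise finite filtrations concentrated in $0\le p\le n$; compatibility of the abutment maps $K_N(\pi_{BA}^*\rtimes_rH)$ with these filtrations; exactness of filtered colimits, giving $F_pK_N=\varinjlim_A F_p^AK_N$ and $E^{R}=E^\infty$ for $R=\max\{2,n+1\}$; and continuity of $K$-theory together with Proposition~\ref{prop:crossed-product-stage-colimit} to identify the abutment. The paper does not reconstruct the exact-couple argument for what you call the main obstacle (Fact~1); it simply cites \cite[Lemma~6.8]{miller2025isomorphisms}, which states that the localisation map preserves the filtrations, and your Fact~3 is used only implicitly in the paper's final chain of isomorphisms.
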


\begin{proof}
For every \( A\in\mathcal S \), \cite[Theorem~2.19]{miller2025isomorphisms} gives strong convergence of \( E^r_{p,q}(A) \) to the corresponding ABC localisation. Since \( \widehat A\rtimes H \) satisfies the strong Baum--Connes conjecture with coefficients, this localisation identifies with \( K_*(C_r^*(\widehat A\rtimes H)) \). Thus, for every \( N\in\mathbb Z \), the spectral sequence associated to \( A \) determines an exhaustive separated filtration
\[
\cdots
\subseteq
F_{p-1}^AK_N
\subseteq
F_p^AK_N
\subseteq
F_{p+1}^AK_N
\subseteq
\cdots
\]
of \( K_N(C_r^*(\widehat A\rtimes H)) \) with
\[
F_p^AK_N/F_{p-1}^AK_N
\cong
E^\infty_{p,N-p}(A).
\]

The spectral sequence associated to \( A \) is concentrated in degrees \( 0\leq p\leq n \). Hence \( E^\infty_{p,q}(A)=0 \) for \( p<0 \) and \( p>n \). Since the filtration is separated and exhaustive, we obtain \( F_p^AK_N=0 \) for \( p<0 \) and \( F_p^AK_N=K_N(C_r^*(\widehat A\rtimes H)) \) for \( p\ge n \). By Proposition~\ref{prop:stage-map-identification}, the abutment map associated to \( A\subseteq B \) is \( K_N(\pi_{BA}^*\rtimes_rH) \). By \cite[Lemma~6.8]{miller2025isomorphisms}, the corresponding localisation map preserves these filtrations. Hence \( K_N(\pi_{BA}^*\rtimes_rH) \) maps \( F_p^AK_N \) into \( F_p^BK_N \).

For \( 0\le p\le n \), let
\[
F_pK_N
=
\varinjlim_{A\in\mathcal S}F_p^AK_N.
\]
Exactness of filtered colimits gives the finite exhaustive filtration
\[
0
=
F_{-1}K_N
\subseteq
F_0K_N
\subseteq
\cdots
\subseteq
F_nK_N
=
\varinjlim_{A\in\mathcal S}
K_N(C_r^*(\widehat A\rtimes H)).
\]
Continuity of \( K \)-theory and Proposition~\ref{prop:crossed-product-stage-colimit} identify the last term with
\begin{equation}
\label{eq:colimit-abutment-group}
F_nK_N
\cong
K_N(C_u^*(G,\mathcal E)).
\end{equation}

Let \( R=\max\{2,n+1\} \). Since \( E^r_{p,q}(A)=0 \) for \( p<0 \) and \( p>n \), every differential \( d_r(A) \) vanishes for \( r>n \). Hence
\[
E^R_{p,q}(A)
=
E^\infty_{p,q}(A)
\]
for every \( A\in\mathcal S \). Therefore
\[
E^R_{p,q}
=
\varinjlim_{A\in\mathcal S}E^R_{p,q}(A)
=
\varinjlim_{A\in\mathcal S}E^\infty_{p,q}(A)
=
E^\infty_{p,q}.
\]

Finally,
\[
\begin{aligned}
F_pK_N/F_{p-1}K_N
&\cong
\varinjlim_{A\in\mathcal S}
\left(
F_p^AK_N/F_{p-1}^AK_N
\right)\\
&\cong
\varinjlim_{A\in\mathcal S}
E^\infty_{p,N-p}(A)\\
&\cong
E^\infty_{p,N-p},
\end{aligned}
\]
where the first isomorphism follows from the exactness of filtered colimits. Therefore the filtration of \( K_N(C_u^*(G,\mathcal E)) \) is finite, exhaustive, and separated, and its associated graded groups are \( E^\infty_{p,N-p} \). Hence the spectral sequence strongly converges.
\end{proof}

\subsection{Integral degeneration}

To identify the associated graded groups of the filtration in Proposition~\ref{prop:directed-colimit-convergence} from the second page, we prove that every differential \( d_r \), for \( r\geq2 \), is zero.

\begin{proposition}
\label{prop:integral-degeneration}
For each \( r\geq2 \), the differential \( d_r\colon E^r_{p,q}\to E^r_{p-r,q+r-1} \) is zero. Consequently, \( E^2=E^\infty \).
\end{proposition}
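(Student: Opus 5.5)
We prove the vanishing by induction on \( r\geq2 \). At each page we combine two facts: the rational vanishing in Equation~\eqref{eq:colimit-rational-degeneration}, and the observation that the possible codomains of \( d_r \) are torsion-free. The case \( r=2 \) is immediate. Every \( d_2 \) changes the parity of \( q \), so by Equation~\eqref{eq:colimit-second-page} either its domain or its codomain lies in an odd row and vanishes. Hence \( E^3=E^2 \).

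For the inductive step, assume \( d_2,\ldots,d_{r-1} \) all vanish, so that \( E^r=E^2 \). Any nonzero \( d_r \) must go from an even row to an even row, and this forces \( r \) to be odd. Its codomain is \( E^2_{p-r,q+r-1}\cong H_{p-r}(H;\ell^\infty(G,\mathbb Z)) \). We compute these groups in Theorem~\ref{thm:integer-coefficient-group-homology}. Since \( r\geq3 \) and \( p\leq n \), the degree satisfies \( p-r<n \), so the codomain is either \( 0 \) or \( \mathbb Q^{(\mathfrak c)} \). In either case it is torsion-free; indeed it is uniquely divisible. Equation~\eqref{eq:colimit-rational-degeneration} shows that every element of \( \operatorname{im}(d_r) \) is torsion. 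A torsion element of a torsion-free group is zero, so \( d_r=0 \) and \( E^{r+1}=E^r=E^2 \). By induction, every differential vanishes. Proposition~\ref{prop:directed-colimit-convergence} shows that the pages stabilize at \( E^R=E^\infty \) with \( R=\max\{2,n+1\} \), and therefore \( E^2=E^\infty \).

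The main point to check is the deduction from Equation~\eqref{eq:colimit-rational-degeneration} that the image of \( d_r \) is torsion. Here we must be careful that the statement is about the colimit differential on the \( r \)-th page, and that rationalization commutes with forming the pages. Take \( x\in E^r_{p,q} \) and suppose \( d_r\otimes\mathbb Q \) vanishes on it. Then \( d_r(x)\otimes1=0 \) in \( E^r_{p-r,q+r-1}\otimes\mathbb Q \). Since \( -\otimes\mathbb Q \) is the localization at the nonzero integers, this means \( m\,d_r(x)=0 \) for some nonzero integer \( m \), so \( d_r(x) \) is torsion.

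To justify Equation~\eqref{eq:colimit-rational-degeneration} itself, we use the stage-wise rational vanishing in Equation~\eqref{eq:stage-rational-degeneration}. At each stage, \( \mathbb Q \) is flat, so \( E^r(A)\otimes\mathbb Q \) is the \( r \)-th page of the rationalized spectral sequence and \( d_r(A)\otimes\mathbb Q \) is its differential. Both filtered colimits and \( -\otimes\mathbb Q \) are exact, so \( d_r\otimes\mathbb Q \) is the colimit of the maps \( d_r(A)\otimes\mathbb Q \). These maps all vanish, hence so does \( d_r\otimes\mathbb Q \).

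Alternatively, we can avoid this bookkeeping by working with elements. Any \( x\in E^r_{p,q} \) comes from some stage \( A \). The stage differential \( d_r(A)(x_A) \) is torsion, and the morphism of spectral sequences into the colimit carries it to \( d_r(x) \). A homomorphic image of a torsion element is torsion, so \( d_r(x) \) is torsion.
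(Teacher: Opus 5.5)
Your proposal is correct and follows the paper's proof essentially step for step. Both argue by induction on $r$, kill $d_2$ by the parity of the rows, and for $r\geq3$ note that $\operatorname{im}(d_r)$ is torsion by the rational degeneration while the target $E^2_{p-r,q+r-1}$ is $0$ or $\mathbb Q^{(\mathfrak c)}$ by Theorem~\ref{thm:integer-coefficient-group-homology}; your extra bookkeeping (why the rational vanishing passes to the colimit and yields torsion images, and the tacit reduction to $p\leq n$ because the domain vanishes for $p>n$) only spells out steps the paper states briefly.
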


\begin{proof}
We prove by induction on \( r\geq2 \) that \( d_r=0 \). By Equation~\eqref{eq:colimit-second-page}, \( E^2_{p,q}=0 \) for odd \( q \). If \( q \) is odd, the domain of \( d_2 \) is zero, and if \( q \) is even, its codomain \( E^2_{p-2,q+1} \) is zero. Hence \( d_2=0 \).

Fix \( r\geq3 \), and suppose that \( d_s=0 \) for every \( 2\leq s<r \). Then \( E^r=E^2 \). Equation~\eqref{eq:colimit-rational-degeneration} implies that every element of \( \operatorname{im}(d_r) \) maps to zero after tensoring with \( \mathbb Q \), so \( \operatorname{im}(d_r) \) is torsion.

If \( p>n \), then \( E^r_{p,q}=E^2_{p,q}=0 \) by Equation~\eqref{eq:colimit-horizontal-bound}. We may therefore assume that \( p\leq n \), so \( p-r<n \). If \( p-r<0 \), then \( E^r_{p-r,q+r-1}=0 \). If \( 0\leq p-r<n \), then Equation~\eqref{eq:colimit-second-page} and Theorem~\ref{thm:integer-coefficient-group-homology} identify the target \( E^r_{p-r,q+r-1}=E^2_{p-r,q+r-1} \) with either \( 0 \) or \( \mathbb Q^{(\mathfrak c)} \). Thus the target is torsion-free.

Since \( \operatorname{im}(d_r) \) is torsion and the target is torsion-free, \( \operatorname{im}(d_r)=0 \). Hence \( d_r=0 \). By induction, \( d_r=0 \) for every \( r\geq2 \). Therefore \( E^2=E^\infty \).
\end{proof}

\subsection[\texorpdfstring{Computation of the \( K \)-groups for finite-rank \( H \)}{Computation of the K-groups for finite-rank H}]{Computation of the \( K \)-groups for finite-rank \( H \)}
\label{sec:finite-rank-computation}

Proposition~\ref{prop:directed-colimit-convergence}, Proposition~\ref{prop:integral-degeneration}, Equation~\eqref{eq:colimit-second-page}, and Theorem~\ref{thm:integer-coefficient-group-homology} determine the associated graded groups of the finite filtration of \( K_*(C_u^*(G,\mathcal E)) \). We split this filtration to obtain an explicit description of the \( K \)-groups.

\begin{theorem}
\label{thm:finite-rank-k-theory}
The \( K \)-groups of \( C_u^*(G,\mathcal E) \) satisfy
\begin{equation}
\label{eq:finite-rank-k-theory}
\left(
K_0\bigl(C_u^*(G,\mathcal E)\bigr),
K_1\bigl(C_u^*(G,\mathcal E)\bigr)
\right)
\cong
\begin{cases}
\left(
\ell^\infty(G,\mathbb Z),
0
\right),
& n=0,\\[1ex]
\left(
\mathbb Q^{(\mathfrak c)},
\ell^\infty(G/H,\mathbb Z)
\right),
& n=1,\\[1ex]
\left(
\mathbb Q^{(\mathfrak c)}
\oplus
\ell^\infty(G/H,\mathbb Z),
\mathbb Q^{(\mathfrak c)}
\right),
& n\geq2 \text{ and } n \text{ is even},\\[1ex]
\left(
\mathbb Q^{(\mathfrak c)},
\mathbb Q^{(\mathfrak c)}
\oplus
\ell^\infty(G/H,\mathbb Z)
\right),
& n\geq3 \text{ and } n \text{ is odd}.
\end{cases}
\end{equation}
The isomorphism in Equation~\eqref{eq:finite-rank-k-theory} is canonical when \( n=0 \) and noncanonical when \( n\geq1 \). Before choosing the ordered basis of \( H \), the associated graded group \( F_nK_{n\bmod 2}/F_{n-1}K_{n\bmod 2} \) is canonically isomorphic to \( \ell^\infty(G/H,\mathbb Z)\otimes_{\mathbb Z}\Lambda^nH \).
\end{theorem}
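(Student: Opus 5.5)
The plan is to read off the associated graded groups from the degenerate spectral sequence and then split the resulting finite filtration. By Proposition~\ref{prop:directed-colimit-convergence} and Proposition~\ref{prop:integral-degeneration}, for each \( N\in\{0,1\} \) the group \( K_N(C_u^*(G,\mathcal E)) \) carries a finite exhaustive filtration \( 0=F_{-1}K_N\subseteq F_0K_N\subseteq\cdots\subseteq F_nK_N=K_N \). Its graded pieces are
\[
F_pK_N/F_{p-1}K_N\cong E^2_{p,N-p}.
\]
By Equation~\eqref{eq:colimit-second-page}, this piece vanishes unless \( p\equiv N \pmod 2 \), in which case it equals \( H_p(H;\ell^\infty(G,\mathbb Z)) \). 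Theorem~\ref{thm:integer-coefficient-group-homology} then tells us the graded pieces: \( \mathbb Q^{(\mathfrak c)} \) for \( p<n \) of the right parity, and \( \ell^\infty(G/H,\mathbb Z)\otimes\Lambda^nH \) for \( p=n \), which lands in \( K_{n\bmod 2} \). This already gives the canonical statement about the top graded piece.

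For \( n=0 \), the filtration has a single step, so \( K_0\cong H_0(\{0\};\ell^\infty(G,\mathbb Z))=\ell^\infty(G,\mathbb Z) \) canonically, and \( K_1=0 \). In this case no splitting is needed.

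For \( n\geq1 \), I would split the filtration from the bottom up. Every nonzero graded piece in degree \( p<n \) is a \( \mathbb Q \)-vector space, hence divisible and therefore injective as a \( \mathbb Z \)-module.

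1. By induction on \( p<n \), \( F_pK_N \) is an iterated extension of divisible groups. Extensions of injective groups split, so \( F_pK_N \) is a direct sum of copies of \( \mathbb Q^{(\mathfrak c)} \), hence divisible and injective.
2. Grouping by parity, the degrees \( p<n \) with \( p\equiv N \) contribute a direct sum of at most \( n \) copies of \( \mathbb Q^{(\mathfrak c)} \). Since \( \mathfrak c\cdot k=\mathfrak c \) for finite \( k\geq1 \), this is again \( \mathbb Q^{(\mathfrak c)} \) whenever at least one such degree exists.
3. For parity \( N\equiv n \), the top step is \( 0\to F_{n-1}K_N\to K_N\to\ell^\infty(G/H,\mathbb Z)\otimes\Lambda^nH\to0 \). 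This sequence splits because \( F_{n-1}K_N \) is injective, giving \( K_N\cong F_{n-1}K_N\oplus\ell^\infty(G/H,\mathbb Z) \) once the ordered basis is fixed. No freeness of \( \ell^\infty(G/H,\mathbb Z) \) is needed.

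The remaining work is bookkeeping of which parities contain a degree \( p<n \):

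\begin{itemize}
\item For \( n=1 \), only \( p=0 \) lies below \( n \). This gives \( K_0\cong\mathbb Q^{(\mathfrak c)} \) and \( K_1\cong\ell^\infty(G/H,\mathbb Z) \), where \( F_0K_1=0 \) because \( E^2_{0,1}=0 \).
\item For \( n\geq2 \), both parities occur among \( 0,\dots,n-1 \), which yields exactly the even and odd cases displayed in the theorem.
\end{itemize}

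Noncanonicity for \( n\geq1 \) comes from the choices of splittings and of \( \mathbb Q \)-bases already present in Theorem~\ref{thm:integer-coefficient-group-homology}.

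The main point requiring care is the splitting step: it is the divisibility, and hence injectivity, of the lower filtration pieces that makes each extension split, rather than any freeness of the top quotient. I expect the rest to be routine.
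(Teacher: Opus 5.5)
Your proposal is correct and follows essentially the same route as the paper. You read off the graded pieces \(F_pK_N/F_{p-1}K_N\cong E^2_{p,N-p}\) from the degenerate colimit spectral sequence, split the lower filtration inductively because the \(\mathbb Q^{(\mathfrak c)}\) pieces are divisible and hence injective, split the top step \(0\to F_{n-1}K_N\to K_N\to\ell^\infty(G/H,\mathbb Z)\otimes_{\mathbb Z}\Lambda^nH\to0\) for the same reason, and then sort the cases by the parities of the degrees \(p<n\).
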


\begin{proof}
Fix \( N\in\mathbb Z/2\mathbb Z \). Proposition~\ref{prop:directed-colimit-convergence} gives the finite filtration \( 0=F_{-1}K_N\subseteq F_0K_N\subseteq\cdots\subseteq F_nK_N=K_N(C_u^*(G,\mathcal E)) \). By Proposition~\ref{prop:integral-degeneration}, Equation~\eqref{eq:colimit-second-page}, and Theorem~\ref{thm:integer-coefficient-group-homology}, the successive quotients satisfy
\[
F_pK_N/F_{p-1}K_N
\cong
\begin{cases}
\mathbb Q^{(\mathfrak c)},
& p<n \text{ and } p\equiv N\pmod 2,\\
\ell^\infty(G/H,\mathbb Z),
& p=n \text{ and } n\equiv N\pmod 2,\\
0,
& \text{otherwise}.
\end{cases}
\]

We prove by induction that, for every \( -1\leq p<n \),
\[
F_pK_N
\cong
\bigoplus_{\substack{0\leq j\leq p\\j\equiv N\pmod 2}}
\mathbb Q^{(\mathfrak c)}.
\]
For \( p=-1 \), both sides are zero. Fix \( 0\leq p<n \), and suppose that the assertion holds for \( F_{p-1}K_N \). From the filtration, we obtain the short exact sequence \( 0\to F_{p-1}K_N\to F_pK_N\to F_pK_N/F_{p-1}K_N\to0 \). By the inductive hypothesis, \( F_{p-1}K_N \) is isomorphic to a finite direct sum of divisible groups. Hence \( F_{p-1}K_N \) is divisible and therefore injective. The inclusion \( F_{p-1}K_N\to F_pK_N \) admits a retraction, so the sequence splits. If \( p\not\equiv N\pmod 2 \), then the quotient is zero and \( F_pK_N=F_{p-1}K_N \). If \( p\equiv N\pmod 2 \), then \( F_pK_N\cong F_{p-1}K_N\oplus\mathbb Q^{(\mathfrak c)} \). This completes the induction.

Therefore
\[
F_{n-1}K_N
\cong
\bigoplus_{\substack{0\leq p<n\\p\equiv N\pmod 2}}
\mathbb Q^{(\mathfrak c)}.
\]
Every nonzero finite direct sum of copies of \( \mathbb Q^{(\mathfrak c)} \) is isomorphic to \( \mathbb Q^{(\mathfrak c)} \). An even integer \( p \) with \( 0\leq p<n \) exists exactly when \( n\geq1 \), and an odd integer \( p \) with \( 0\leq p<n \) exists exactly when \( n\geq2 \). Hence
\[
F_{n-1}K_0
\cong
\begin{cases}
0,
& n=0,\\
\mathbb Q^{(\mathfrak c)},
& n\geq1,
\end{cases}
\qquad
F_{n-1}K_1
\cong
\begin{cases}
0,
& n\leq1,\\
\mathbb Q^{(\mathfrak c)},
& n\geq2.
\end{cases}
\]

If \( n\not\equiv N\pmod 2 \), then \( F_nK_N/F_{n-1}K_N=0 \), so \( K_N(C_u^*(G,\mathcal E))=F_{n-1}K_N \). If \( n\equiv N\pmod 2 \), we obtain the short exact sequence \( 0\to F_{n-1}K_N\to K_N(C_u^*(G,\mathcal E))\to\ell^\infty(G/H,\mathbb Z)\to0 \). When \( F_{n-1}K_N=0 \), the map \( K_N(C_u^*(G,\mathcal E))\to\ell^\infty(G/H,\mathbb Z) \) is an isomorphism. When \( F_{n-1}K_N\cong\mathbb Q^{(\mathfrak c)} \), the group \( F_{n-1}K_N \) is divisible and therefore injective, so the sequence splits.

For \( n=0 \), we have \( G/H=G \), and the filtration gives \( K_0(C_u^*(G,\mathcal E))\cong\ell^\infty(G,\mathbb Z) \) and \( K_1(C_u^*(G,\mathcal E))=0 \). For \( n=1 \), the preceding computation gives \( K_0(C_u^*(G,\mathcal E))\cong\mathbb Q^{(\mathfrak c)} \) and \( K_1(C_u^*(G,\mathcal E))\cong\ell^\infty(G/H,\mathbb Z) \). For \( n\geq2 \), the integers \( p \) with \( 0\leq p<n \) include both parities, so the preceding computation gives a \( \mathbb Q^{(\mathfrak c)} \) summand in both \( K_0(C_u^*(G,\mathcal E)) \) and \( K_1(C_u^*(G,\mathcal E)) \). If \( n \) is even, the quotient in filtration degree \( n \) gives the \( \ell^\infty(G/H,\mathbb Z) \) summand in \( K_0(C_u^*(G,\mathcal E)) \). If \( n \) is odd, it gives the \( \ell^\infty(G/H,\mathbb Z) \) summand in \( K_1(C_u^*(G,\mathcal E)) \). This gives Equation~\eqref{eq:finite-rank-k-theory}.

For \( p<n \), the resulting decompositions depend on the isomorphisms in Theorem~\ref{thm:integer-coefficient-group-homology} and on the chosen splittings of the filtration. The identification \( F_nK_{n\bmod 2}/F_{n-1}K_{n\bmod 2}\cong\ell^\infty(G/H,\mathbb Z) \) depends on the chosen ordered basis of \( H \).
\end{proof}

\subsection{Low-rank cases}

We record Theorem~\ref{thm:finite-rank-k-theory} in ranks \( 0 \), \( 1 \), and \( 2 \). For \( G=H=\mathbb Z^n \), our formulas in ranks \( 1 \) and \( 2 \) agree with the computations of Kato, Kishimoto, and Tsutaya and determine the dimensions of the rational vector spaces in those computations \cite{Kato_2021}.

When \( n=0 \), we have \( H=\{0\} \), and \( \mathcal E \) consists precisely of the subsets of the diagonal. Therefore \( C_u^*(G,\mathcal E)=\ell^\infty(G) \), so \( K_0(C_u^*(G,\mathcal E))\cong\ell^\infty(G,\mathbb Z) \) and \( K_1(C_u^*(G,\mathcal E))=0 \).

Suppose that \( n=1 \), and choose a generator \( h \) of \( H \). The Koszul complex is
\[
0
\longrightarrow
\ell^\infty(G,\mathbb Z)
\xrightarrow{\,f\mapsto f-f\cdot h\,}
\ell^\infty(G,\mathbb Z)
\longrightarrow
0.
\]
Its homology groups satisfy
\[
H_1\bigl(H;\ell^\infty(G,\mathbb Z)\bigr)
\cong
\ell^\infty(G/H,\mathbb Z),
\qquad
H_0\bigl(H;\ell^\infty(G,\mathbb Z)\bigr)
=
\frac{\ell^\infty(G,\mathbb Z)}
{\{f-f\cdot h:f\in\ell^\infty(G,\mathbb Z)\}}.
\]
Theorem~\ref{thm:integer-coefficient-group-homology} identifies \( H_0(H;\ell^\infty(G,\mathbb Z)) \) noncanonically with \( \mathbb Q^{(\mathfrak c)} \). Theorem~\ref{thm:finite-rank-k-theory} therefore gives
\[
K_0\bigl(C_u^*(G,\mathcal E)\bigr)
\cong
\mathbb Q^{(\mathfrak c)},
\qquad
K_1\bigl(C_u^*(G,\mathcal E)\bigr)
\cong
\ell^\infty(G/H,\mathbb Z).
\]
For \( G=H=\mathbb Z \), the quotient \( G/H \) is trivial. Hence \( K_0(C_u^*(\mathbb Z,\mathcal E))\cong\mathbb Q^{(\mathfrak c)} \) and \( K_1(C_u^*(\mathbb Z,\mathcal E))\cong\mathbb Z \). This agrees with \cite[Proposition~6.1]{Kato_2021} and strengthens \cite[Proposition~6.3]{Kato_2021} by identifying the dimension of the rational vector space as \( \mathfrak c \).

Suppose that \( n=2 \), and choose an ordered basis \( h_1,h_2 \) of \( H \). The Koszul complex is
\[
0
\longrightarrow
\ell^\infty(G,\mathbb Z)
\xrightarrow{\partial_2}
\ell^\infty(G,\mathbb Z)^2
\xrightarrow{\partial_1}
\ell^\infty(G,\mathbb Z)
\longrightarrow
0,
\]
where \( \partial_1(a,b)=a-a\cdot h_1+b-b\cdot h_2 \) and \( \partial_2(c)=(-(c-c\cdot h_2),c-c\cdot h_1) \). Theorem~\ref{thm:integer-coefficient-group-homology} gives noncanonical isomorphisms
\[
H_0\bigl(H;\ell^\infty(G,\mathbb Z)\bigr)
\cong
H_1\bigl(H;\ell^\infty(G,\mathbb Z)\bigr)
\cong
\mathbb Q^{(\mathfrak c)}.
\]
Lemma~\ref{lem:top-koszul-homology} gives the canonical form
\[
H_2\bigl(H;\ell^\infty(G,\mathbb Z)\bigr)
\cong
\ell^\infty(G/H,\mathbb Z)\otimes_{\mathbb Z}\Lambda^2H.
\]
The chosen ordered basis identifies this group with \( \ell^\infty(G/H,\mathbb Z) \). Theorem~\ref{thm:finite-rank-k-theory} therefore gives
\[
K_0\bigl(C_u^*(G,\mathcal E)\bigr)
\cong
\mathbb Q^{(\mathfrak c)}
\oplus
\ell^\infty(G/H,\mathbb Z),
\qquad
K_1\bigl(C_u^*(G,\mathcal E)\bigr)
\cong
\mathbb Q^{(\mathfrak c)}.
\]
For \( G=H=\mathbb Z^2 \), the quotient \( G/H \) is trivial. Hence \( K_0(C_u^*(\mathbb Z^2,\mathcal E))\cong\mathbb Q^{(\mathfrak c)}\oplus\mathbb Z \) and \( K_1(C_u^*(\mathbb Z^2,\mathcal E))\cong\mathbb Q^{(\mathfrak c)} \). This agrees with \cite[Lemmas~7.2, 7.6, and 7.7]{Kato_2021} and strengthens Lemma~7.7 by identifying both rational dimensions as \( \mathfrak c \).

\section{Applications}

\subsection{\texorpdfstring{Uniform \( L^p \)-Roe algebras}{Uniform Lp-Roe algebras}}

For a uniformly locally finite extended metric space \( (X,d) \) and \( p\in(1,\infty) \), the uniform \( L^p \)-Roe algebra \( B_u^p(X,d) \) is the operator norm closure in \( \mathcal B(\ell^p(X)) \) of the algebra of all bounded operators of finite propagation \cite[p.~634]{austad2025polynomial}. For \( p=2 \), \( B_u^2(X,d) \) is the uniform Roe algebra of the bounded coarse structure induced by \( d \).

A central problem in the \( K \)-theory of \( L^p \)-operator algebras is to determine when the \( K \)-groups are independent of the exponent \( p \), and this problem has also been studied for \( L^p \)-Roe algebras \cite[Question~3.1]{austad2025polynomial}. For a uniformly locally finite extended metric space whose ball cardinalities admit a uniform polynomial bound, the \( K \)-groups of \( B_u^p(X,d) \) are independent of \( p\in(1,\infty) \) \cite[Corollary~5.9]{austad2025polynomial}.

In this subsection, we construct an extended metric \( d_H \) on \( G \) that induces \( \mathcal E \), makes \( (G,d_H) \) uniformly locally finite, and gives a uniform polynomial bound on its ball cardinalities. Since \( B_u^2(G,d_H)=C_u^*(G,\mathcal E) \), Corollary~5.9 of \cite{austad2025polynomial} transfers the computation of Theorem~\ref{thm:finite-rank-k-theory} from \( p=2 \) to every \( p\in(1,\infty) \).

Let \( |\cdot|_H \) denote the word length on \( H \) associated with the basis \( e_1,\ldots,e_n \). Define \( d_H\colon G\times G\to[0,\infty] \) by
\[
d_H(g,k)=
\begin{cases}
|g-k|_H, & g-k\in H,\\
\infty, & g-k\notin H.
\end{cases}
\]
On each \( H \)-coset, the restriction of \( d_H \) is the translated word metric on \( H \), while points in distinct \( H \)-cosets have infinite distance. Thus \( d_H \) is an extended metric on \( G \).

\begin{lemma}
\label{lem:lp-extended-metric}
The extended metric space \( (G,d_H) \) is uniformly locally finite, has polynomial growth, and the coarse structure induced by \( d_H \) is \( \mathcal E \).
\end{lemma}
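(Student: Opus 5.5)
The plan is to verify the three assertions directly from the definition of \( d_H \), using only that the word length \( |\cdot|_H \) on \( H\cong\mathbb Z^n \) is the \( \ell^1 \)-norm in the coordinates given by the basis \( e_1,\ldots,e_n \).

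\emph{Balls and growth.} For \( g\in G \) and \( R\geq0 \), the closed ball is
\[
B_R(g)=\{k\in G:d_H(g,k)\leq R\}=g+\{h\in H:|h|_H\leq R\},
\]
because a finite distance forces \( g-k\in H \), and \( |\cdot|_H \) is symmetric. Translation by \( g \) is a bijection onto its image, so \( |B_R(g)| \) equals the cardinality of the \( \ell^1 \)-ball of radius \( R \) in \( \mathbb Z^n \), independently of \( g \). That ball lies inside the cube \( \{-\lfloor R\rfloor,\ldots,\lfloor R\rfloor\}^n \), so
\[
|B_R(g)|\leq(2R+1)^n .
\]
This single estimate gives both conclusions. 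Uniform local finiteness follows since \( \sup_g|B_R(g)| \) is finite for each \( R \). Polynomial growth holds in the uniform sense required in \cite{austad2025polynomial}, namely ball cardinalities bounded by a polynomial in \( R \) uniformly in the center; for \( n=0 \) every ball is a singleton. I will check that the bound has the exact form used in \cite[Corollary~5.9]{austad2025polynomial}, for example \( C(1+R)^n \), and adjust constants if needed.

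\emph{Equality of coarse structures.} The bounded coarse structure of \( d_H \) consists of the subsets of the sets \( D_R=\{(g,k):d_H(g,k)\leq R\} \) for \( R\geq0 \).
\begin{itemize}
\item Each \( D_R \) equals \( \{(g,k):g-k\in F_R\} \) with \( F_R=\{h\in H:|h|_H\leq R\} \). Since \( F_R \) is finite, \( D_R \) is a generator of \( \mathcal E \), and so every \( d_H \)-bounded set lies in \( \mathcal E \).
\item Conversely, \( \mathcal E \) is generated by the sets \( E_F=\{(g,k):g-k\in F\} \) for finite \( F\subseteq H \). Taking \( R=\max_{h\in F}|h|_H \) gives \( E_F\subseteq D_R \). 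The \( d_H \)-bounded sets form a coarse structure, using the triangle inequality and \( D_R\circ D_S\subseteq D_{R+S} \). Since this structure contains all the generators, it contains \( \mathcal E \).
\end{itemize}

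There is essentially no obstacle here. The one point needing care is the precise meaning of ``polynomial growth'' in the cited corollary, which I will match by stating the uniform bound explicitly.
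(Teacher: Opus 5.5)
Your proposal is correct and follows essentially the same route as the paper. Both identify each ball as the translate $g+\{h\in H:|h|_H\leq R\}$, bound its size by $(2R+1)^n$, observe that each $D_R$ is the generator $\{(g,k):g-k\in F_R\}$, and note that each generator $E_F$ lies in some $D_R$. Your explicit check that the $d_H$-bounded sets form a coarse structure, via $D_R\circ D_S\subseteq D_{R+S}$, fills in a step the paper leaves implicit.
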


\begin{proof}
For \( g\in G \) and \( R\geq0 \), we have 
\( B_{d_H}(g,R)=g+\{h\in H:|h|_H\leq R\} \). Hence 
\[ 
|B_{d_H}(g,R)|
\leq
(2\lfloor R\rfloor+1)^n
\leq
(2R+1)^n 
\] 
for every \( g\in G \) and \( R\geq0 \). Thus \( (G,d_H) \) is uniformly locally finite and has polynomial growth.

For \( R\geq0 \), the pairs \( (g,k) \) satisfying \( d_H(g,k)\leq R \) are exactly those for which \( g-k\in H \) and \( |g-k|_H\leq R \). Since \( \{h\in H:|h|_H\leq R\} \) is finite, the set \( \{(g,k)\in G\times G:d_H(g,k)\leq R\} \) is one of the sets used to generate \( \mathcal E \). Every element of the coarse structure induced by \( d_H \) is contained in \( \{(g,k)\in G\times G:d_H(g,k)\leq R\} \) for some \( R\geq0 \), so it belongs to \( \mathcal E \). Conversely, for every finite \( F\subseteq H \), some \( R\geq0 \) satisfies \( |h|_H\leq R \) for all \( h\in F \). Hence each generator \( \{(g,k)\in G\times G:g-k\in F\} \) of \( \mathcal E \) is contained in \( \{(g,k)\in G\times G:d_H(g,k)\leq R\} \). Therefore the coarse structure induced by \( d_H \) is \( \mathcal E \).
\end{proof}

For \( p\in(1,\infty) \), let \( B_u^p(G,d_H) \) denote the uniform \( L^p \)-Roe algebra of \( (G,d_H) \). By Lemma~\ref{lem:lp-extended-metric} and \cite[Corollary~5.9]{austad2025polynomial}, the \( K \)-groups of \( B_u^p(G,d_H) \) are independent of \( p\in(1,\infty) \). Since \( B_u^2(G,d_H)=C_u^*(G,\mathcal E) \), Theorem~\ref{thm:finite-rank-k-theory} gives the following computation.

\begin{corollary}
\label{cor:lp-uniform-roe}
For every \( p\in(1,\infty) \), the \( K \)-groups of \( B_u^p(G,d_H) \) satisfy
\[
\left(
K_0\bigl(B_u^p(G,d_H)\bigr),
K_1\bigl(B_u^p(G,d_H)\bigr)
\right)
\cong
\begin{cases}
\left(
\ell^\infty(G,\mathbb Z),
0
\right),
& n=0,\\[1ex]
\left(
\mathbb Q^{(\mathfrak c)},
\ell^\infty(G/H,\mathbb Z)
\right),
& n=1,\\[1ex]
\left(
\mathbb Q^{(\mathfrak c)}
\oplus
\ell^\infty(G/H,\mathbb Z),
\mathbb Q^{(\mathfrak c)}
\right),
& n\geq2 \text{ even},\\[1ex]
\left(
\mathbb Q^{(\mathfrak c)},
\mathbb Q^{(\mathfrak c)}
\oplus
\ell^\infty(G/H,\mathbb Z)
\right),
& n\geq3 \text{ odd}.
\end{cases}
\]
\end{corollary}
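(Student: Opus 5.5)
The plan is to deduce the corollary from Theorem~\ref{thm:finite-rank-k-theory} by showing that the uniform $L^p$-Roe algebras of $(G,d_H)$ all have the same $K$-theory, independent of the exponent. Two inputs are needed.

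\textbf{Step 1: reduce to $p=2$.} Lemma~\ref{lem:lp-extended-metric} shows that $(G,d_H)$ is a uniformly locally finite extended metric space. It also gives the uniform polynomial bound $|B_{d_H}(g,R)|\leq(2R+1)^n$, with constant and degree independent of the centre $g$. These are exactly the hypotheses of \cite[Corollary~5.9]{austad2025polynomial}. I will check that this corollary allows extended metrics, meaning infinite distances between distinct $H$-cosets. The setting recalled from \cite[p.~634]{austad2025polynomial} is stated for extended metric spaces, so this should hold. The corollary then gives isomorphisms $K_*(B_u^p(G,d_H))\cong K_*(B_u^2(G,d_H))$ for every $p\in(1,\infty)$.

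\textbf{Step 2: identify the $p=2$ algebra.} I will show that $B_u^2(G,d_H)=C_u^*(G,\mathcal E)$ as subalgebras of $\mathcal B(\ell^2(G))$. An operator has finite $d_H$-propagation exactly when its support lies in $\{(g,k):d_H(g,k)\leq R\}$ for some $R$. By the second half of Lemma~\ref{lem:lp-extended-metric}, these sets generate the same coarse structure $\mathcal E$. So the two $^*$-algebras of finite-propagation operators coincide, and hence so do their operator-norm closures.

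\textbf{Step 3: conclude.} Substitute the $p=2$ computation from Theorem~\ref{thm:finite-rank-k-theory} and transport it along the isomorphisms from Step 1. This gives the displayed four-case formula for every $p$.

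The only real obstacle is confirming that the hypotheses of \cite[Corollary~5.9]{austad2025polynomial} match ours: extended rather than finite metrics, and the precise form of the polynomial growth condition. If that corollary required a genuine metric, I would fall back on a workaround. I would decompose $\ell^p(G)$ as the $\ell^p$-direct sum over cosets. I would then check that the polynomial-growth argument there is uniform over the cosets, since each coset is isometric to $(H,|\cdot|_H)$ with the same growth constants.
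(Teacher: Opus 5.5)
Your proposal is correct and follows the paper's own proof. Both use Lemma~\ref{lem:lp-extended-metric} together with \cite[Corollary~5.9]{austad2025polynomial} to get independence of $p$, then the fact that $d_H$ induces $\mathcal E$ to identify $B_u^2(G,d_H)=C_u^*(G,\mathcal E)$, and finally Theorem~\ref{thm:finite-rank-k-theory}. The paper takes the extended-metric setting of \cite{austad2025polynomial} as given, so your coset-by-coset fallback is not needed.
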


\begin{proof}
By Lemma~\ref{lem:lp-extended-metric} and \cite[Corollary~5.9]{austad2025polynomial}, the groups \( K_*(B_u^p(G,d_H)) \) are independent of \( p\in(1,\infty) \). In particular, \( K_*(B_u^p(G,d_H))\cong K_*(B_u^2(G,d_H)) \). Since \( d_H \) induces \( \mathcal E \), we have \( B_u^2(G,d_H)=C_u^*(G,\mathcal E) \). The result follows from Theorem~\ref{thm:finite-rank-k-theory}.
\end{proof}

\subsection{Virtual persistence diagrams}

For a metric pair \( (X,d,A) \) with \( A\neq\varnothing \), let \( D(X) \) denote the free commutative monoid of finite formal sums of points of \( X \), and let \( D(X,A)=D(X)/D(A) \) be the commutative monoid of persistence diagrams over \( (X,d,A) \) \cite[Sections~2.2--2.3]{bubenik2022virtual}. The group of virtual persistence diagrams is the Grothendieck group completion \( K(X,A)=K(D(X,A)) \), whose elements have the form \( \alpha-\beta \) with \( \alpha,\beta\in D(X,A) \) \cite[Section~2.5 and Definition~4.10]{bubenik2022virtual}. The \( 1 \)-Wasserstein metric extends to the translation-invariant metric
\[
\rho(\alpha-\beta,\gamma-\delta)
=
W_1[d](\alpha+\delta,\gamma+\beta),
\qquad
\alpha,\beta,\gamma,\delta\in D(X,A),
\]
on \( K(X,A) \) \cite[Corollary~4.9 and Definition~4.10]{bubenik2022virtual}. When \( X \) is finite, \( K(X,A) \) is the free abelian group on \( X\setminus A \), so if \( m=|X\setminus A| \), then \( K(X,A)\cong\mathbb Z^m \) \cite[Definition~4.10 and the subsequent paragraph]{bubenik2022virtual}.

Recent work has begun to develop harmonic and functional-analytic methods directly on virtual persistence diagram groups \cite{fanningaktas2025rkhsvpd, fanningaktas2026banachrkhs, fanningaktas2026randomwalks}. These works develop analytic structures from the abelian group structure and translation-invariant metric geometry of virtual persistence diagram groups \cite{fanningaktas2025rkhsvpd, fanningaktas2026banachrkhs, fanningaktas2026randomwalks}.

For a finite metric pair \( (X,d,A) \), let \( \mathcal E_\rho \) denote the coarse structure induced by \( \rho \) on \( K(X,A) \). In this subsection, we show that \( \mathcal E_\rho \) is generated by the entourages determined by finite subsets of \( K(X,A) \). Consequently, Theorem~\ref{thm:finite-rank-k-theory} computes the \( K \)-groups of \( C_u^*(K(X,A),\mathcal E_\rho) \), and their isomorphism classes depend only on \( m=|X\setminus A| \).

\begin{lemma}
\label{lem:virtual-persistence-coarse-structure}
The sets
\[
\{(\alpha,\beta)\in K(X,A)\times K(X,A):\alpha-\beta\in F\},
\]
where \( F \) ranges over the finite subsets of \( K(X,A) \), generate \( \mathcal E_\rho \). In particular, \( (K(X,A),\mathcal E_\rho) \) is uniformly locally finite.
\end{lemma}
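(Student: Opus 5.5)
The plan is to compare the metric coarse structure $\mathcal E_\rho$ with the coarse structure generated by the finite-difference entourages. Since $\rho$ is translation-invariant, the metric entourages are
\[
E_R=\{(\alpha,\beta):\rho(\alpha,\beta)\leq R\}=\{(\alpha,\beta):\alpha-\beta\in B_\rho(0,R)\}.
\]
Here $B_\rho(0,R)$ is the closed $\rho$-ball about the empty diagram $0\in K(X,A)$. Indeed, translation invariance gives $\rho(\alpha,\beta)=\rho(\alpha-\beta,0)$. So it suffices to show that every ball $B_\rho(0,R)$ is finite and every finite subset of $K(X,A)$ lies in some such ball.

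The second inclusion is immediate. For a finite $F\subseteq K(X,A)$, set $R=\max_{\gamma\in F}\rho(\gamma,0)$, which is finite because $\rho$ is a genuine metric. Then $\{(\alpha,\beta):\alpha-\beta\in F\}\subseteq E_R$. For the first inclusion, once $B_\rho(0,R)$ is finite, $E_R$ is itself one of the generating sets. Every element of $\mathcal E_\rho$ is contained in some $E_R$, so the two coarse structures agree.

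The main step is finiteness of balls. Identify $K(X,A)\cong\mathbb Z^m$ with basis $X\setminus A$. Set $\epsilon=\min\{d(x,y):x\neq y,\ x\in X\setminus A,\ y\in X\}>0$, which is positive because $X$ is finite; note that this minimum includes $d(x,A)$. I would show $\rho(\gamma,0)\geq \tfrac{\epsilon}{2}\|\gamma\|_1$, where $\|\cdot\|_1$ is the $\ell^1$ norm in this basis. Write $\gamma=\alpha-\beta$ with $\alpha,\beta\in D(X,A)$ having disjoint supports, so $\|\gamma\|_1=|\alpha|+|\beta|$ counts points off $A$. Then $\rho(\gamma,0)=W_1[d](\alpha,\beta)$. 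Any matching between $\alpha$ and $\beta$, allowing points to be matched to $A$, must move each off-diagonal point of $\alpha$ or $\beta$. Points of $\alpha$ and $\beta$ sit at distinct locations of $X\setminus A$, so each matched pair or diagonal match costs at least $\epsilon$. Each such pair accounts for at most two points, which gives the bound.

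I expect this step to be the main obstacle: I must check against the precise definition of $W_1[d]$ on $D(X,A)$ in \cite{bubenik2022virtual}, in particular that the representation with disjoint supports is valid and that its cost is computed as above. Given the bound, $B_\rho(0,R)$ is contained in an $\ell^1$-ball of $\mathbb Z^m$ and is therefore finite.

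Uniform local finiteness then follows because $E_R[\alpha]=\alpha+B_\rho(0,R)$ has cardinality $|B_\rho(0,R)|$, independent of $\alpha$. Alternatively, it follows from the remark preceding Proposition~\ref{prop:uniform-roe-crossed-product}, applied with $G=H=K(X,A)\cong\mathbb Z^m$.
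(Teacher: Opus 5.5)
Your proof is correct, but it follows a different route from the paper's.

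The paper invokes the isometric embedding of \( (K(X,A),\rho) \) into the finite-dimensional space \( V(X,A)=V(X)/V(A) \) with its \( 1 \)-Wasserstein norm (Corollary~7.10 of Bubenik--Elchesen). Equivalence of norms on \( \mathbb R^m \) then gives a two-sided comparison \( c|\alpha-\beta|\le\rho(\alpha,\beta)\le C|\alpha-\beta| \) with the word metric on \( \mathbb Z^m \), and finiteness of word-metric balls gives the generating sets.

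You prove only the lower bound \( \rho(\gamma,0)\ge\tfrac{\epsilon}{2}\|\gamma\|_1 \), directly from the matching description of \( W_1[d] \) applied to the disjoint-support representative \( \gamma=\alpha-\beta \). You also observe, correctly, that the upper bound is unnecessary, because a finite set is automatically \( \rho \)-bounded. Your counting works:
\begin{itemize}
\item every \( \alpha \)--\( \beta \) pair joins two distinct points of \( X\setminus A \);
\item every match to \( A \) sends a point of \( X\setminus A \) to a different point of \( A \);
\item each such match costs at least \( \epsilon \), and this still holds if the cost is the quotient distance \( \min\{d(x,y),d(x,A)+d(y,A)\} \);
\item each match accounts for at most two points.
\end{itemize}

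Your argument is self-contained and gives an explicit constant. From the reference it needs only the matching formula and the well-definedness of \( \rho \) on differences. The paper's argument is shorter on the page and gives the stronger bi-Lipschitz equivalence with the word metric. However, it rests on the embedding theorem and its constants are not explicit.

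Two small points:
\begin{itemize}
\item When \( m=0 \), your \( \epsilon \) is a minimum over the empty set. That case should be handled separately; it is trivial since \( K(X,A)=0 \).
\item \( E_R[\alpha] \) is \( \alpha-B_\rho(0,R) \), not \( \alpha+B_\rho(0,R) \). This is harmless because the ball is symmetric.
\end{itemize}
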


\begin{proof}
By \cite[Corollary~7.10]{bubenik2022virtual}, the canonical map \( (K(X,A),\rho)\to V(X,A) \) is an isometric embedding when \( V(X,A) \) carries its \( 1 \)-Wasserstein norm. By \cite[pp.~468, 472]{bubenik2022virtual}, \( V(X,A)=V(X)/V(A) \), where \( V(X) \) is the free real vector space on \( X \). Hence the canonical images of the elements of \( X\setminus A \) form a real basis of \( V(X,A) \), so \( \dim_{\mathbb R}V(X,A)=m \).

Equip \( V(X,A) \) with the coordinate \( \ell^1 \)-norm for this basis, and let \( |\cdot| \) denote its restriction to \( K(X,A) \). This restriction equals the word length for the canonical images of the elements of \( X\setminus A \) and their inverses. Since \( V(X,A) \) is finite-dimensional, the coordinate \( \ell^1 \)-norm and the \( 1 \)-Wasserstein norm are equivalent. We may therefore choose constants \( c,C>0 \) such that \( c|\alpha|\leq \rho(\alpha,0)\leq C|\alpha| \) for every \( \alpha\in K(X,A) \). Since \( \rho \) is translation-invariant, we obtain \( c|\alpha-\beta|\leq \rho(\alpha,\beta)\leq C|\alpha-\beta| \) for all \( \alpha,\beta\in K(X,A) \). Hence \( \rho \) and the word metric induce the same coarse structure on \( K(X,A) \).

For \( R\geq0 \), the set \( \{\gamma\in K(X,A):|\gamma|\leq R\} \) is finite, and the pairs at word-metric distance at most \( R \) are exactly the pairs \( (\alpha,\beta) \) whose difference belongs to this set. Conversely, every finite subset \( F\subseteq K(X,A) \) lies in a word-metric ball of finite radius. Thus the sets in the lemma statement generate \( \mathcal E_\rho \), and every entourage in \( \mathcal E_\rho \) is contained in one of these sets. For each finite \( F\subseteq K(X,A) \) and each \( \alpha\in K(X,A) \), the fiber at \( \alpha \) is \( \alpha-F \) and therefore has cardinality \( |F| \). Hence \( (K(X,A),\mathcal E_\rho) \) is uniformly locally finite.
\end{proof}

Lemma~\ref{lem:virtual-persistence-coarse-structure} and Theorem~\ref{thm:finite-rank-k-theory} give the following computation.

\begin{corollary}
\label{cor:virtual-persistence-k-theory}
Let \( (X,d) \) be a finite metric space, let \( \varnothing\neq A\subseteq X \), and set \( m=|X\setminus A| \). Then
\[
\left(
K_0\bigl(C_u^*(K(X,A),\mathcal E_\rho)\bigr),
K_1\bigl(C_u^*(K(X,A),\mathcal E_\rho)\bigr)
\right)
\cong
\begin{cases}
\left(
\mathbb Z,
0
\right),
& m=0,\\[1ex]
\left(
\mathbb Q^{(\mathfrak c)},
\mathbb Z
\right),
& m=1,\\[1ex]
\left(
\mathbb Q^{(\mathfrak c)}
\oplus
\mathbb Z,
\mathbb Q^{(\mathfrak c)}
\right),
& m\geq2 \text{ even},\\[1ex]
\left(
\mathbb Q^{(\mathfrak c)},
\mathbb Q^{(\mathfrak c)}
\oplus
\mathbb Z
\right),
& m\geq3 \text{ odd}.
\end{cases}
\]
\end{corollary}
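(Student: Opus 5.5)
The plan is to reduce Corollary~\ref{cor:virtual-persistence-k-theory} to Theorem~\ref{thm:finite-rank-k-theory} with \( G=H=K(X,A) \). Since \( X \) is finite, \( K(X,A) \) is the free abelian group on \( X\setminus A \), so \( K(X,A)\cong\mathbb Z^m \). It is therefore a countable discrete abelian group, and it is a finite-rank free abelian subgroup of itself, of rank \( n=m \). By Lemma~\ref{lem:virtual-persistence-coarse-structure}, \( \mathcal E_\rho \) is generated by the sets \( \{(\alpha,\beta):\alpha-\beta\in F\} \) with \( F\subseteq K(X,A) \) finite. This is exactly the coarse structure \( \mathcal E \) generated by \( H=G \) in the standing assumptions. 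Hence \( C_u^*(K(X,A),\mathcal E_\rho)=C_u^*(G,\mathcal E) \) as subalgebras of \( \mathcal B(\ell^2(K(X,A))) \), and in particular their \( K \)-groups coincide.

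Next I would apply Theorem~\ref{thm:finite-rank-k-theory} with \( n=m \) and simplify the terms. Since \( G=H \), the quotient \( G/H \) is the trivial group, so \( \ell^\infty(G/H,\mathbb Z)\cong\mathbb Z \) canonically, by evaluation at the single point.

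The case \( m=0 \) needs a separate check, because the theorem's \( n=0 \) entry is \( \ell^\infty(G,\mathbb Z) \) rather than \( \ell^\infty(G/H,\mathbb Z) \). When \( m=0 \), \( K(X,A) \) is the trivial group, so \( \ell^\infty(G,\mathbb Z)=\mathbb Z \). This agrees directly with the fact that \( C_u^* \) of a point is \( \mathbb C \), whose \( K \)-theory is \( (\mathbb Z,0) \).

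Substituting these identifications into the remaining cases \( m=1 \), \( m\geq2 \) even, and \( m\geq3 \) odd gives the displayed pairs term by term.

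There is no real obstacle here. The only points needing care are confirming that the hypotheses of the standing assumptions hold when \( H=G \), with \( G \) countable and free of finite rank, and handling the degenerate case \( m=0 \). In that case \( X=A \), the monoid \( D(X,A) \) is trivial, and \( K(X,A)=0 \).
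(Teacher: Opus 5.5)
Your proposal is correct and follows the paper's proof. You use Lemma~\ref{lem:virtual-persistence-coarse-structure} to identify \( \mathcal E_\rho \) with the coarse structure generated by \( H=G=K(X,A)\cong\mathbb Z^m \), apply Theorem~\ref{thm:finite-rank-k-theory} with \( n=m \), and use \( \ell^\infty(G/H,\mathbb Z)\cong\mathbb Z \); your explicit check of the case \( m=0 \), where the theorem's entry is \( \ell^\infty(G,\mathbb Z) \) with \( G \) trivial, is a detail the paper's one-line proof leaves implicit.
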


\begin{proof}
By Lemma~\ref{lem:virtual-persistence-coarse-structure} and \( K(X,A)\cong\mathbb Z^m \), Theorem~\ref{thm:finite-rank-k-theory} applies with \( G=H=K(X,A) \). Since \( G/H \) is trivial, \( \ell^\infty(G/H,\mathbb Z)\cong\mathbb Z \). The stated formulas follow.
\end{proof}

Thus, for a finite metric space \( (X,d) \) and a nonempty subset \( A\subseteq X \), the isomorphism classes of the \( K \)-groups of \( C_u^*(K(X,A),\mathcal E_\rho) \) depend only on \( |X\setminus A| \).

\section*{Declarations}

\begin{itemize}
% \item \textbf{Competing Interests} The authors declare that they have no competing interests.
% \item \textbf{Funding} This research received no external funding.
% \item \textbf{Data Availability} Not applicable.
% \item \textbf{Code Availability} The implementation used in this work is available at \url{https://github.com/cfanning8/Virtual_Persistence_RKHS}.
\item \textbf{Authors' Contributions}
C.F. developed the theoretical framework, proved the main results, and wrote the manuscript. M.E.A. advised the project and provided feedback on the framework and manuscript.
\item \textbf{AI Usage} The authors used an AI language model to rephrase dense analytic descriptions and to proofread and correct the language and grammar during manuscript preparation. The authors assume responsibility for all content in this publication.
\end{itemize}

\bibliography{sn-bibliography}% common bib file
%% if required, the content of .bbl file can be included here once bbl is generated
%%\input sn-article.bbl

\end{document}